\newtheorem{theorem}{Theorem}
\newtheorem{corollary}{Corollary}
\newtheorem{remark}{Remark}
\newenvironment{proof}[1][Proof]{\textbf{#1.} }{\ \rule{0.5em}{0.5em}}
\definecolor{darkgreen}{rgb}{0.0, 0.2, 0.13}
\definecolor{cadmiumgreen}{rgb}{0.0, 0.42, 0.24}
\newcommand{\ren}[1]{{\color{blue}#1}}
\newcommand{\rui}[1]{{\color{cadmiumgreen}#1}}
\begin{document}
\title{Ruin and dividend measures in the renewal dual risk model}
\date{}
\maketitle
\begin{center}
\vspace{-10mm}
 Renata G.~Alcoforado$\, ^1$, Agnieszka I.~Bergel$\, ^2$, Rui~M.R.~Cardoso$\,^3$, Alfredo D.~Eg\'{\i}dio dos Reis$\,^2$, Eugenio V.~Rodriguez-Martinez$\,^4$,\\

\vspace{0.5cm}
{\it
$^1\,$ISEG \& CEMAPRE, Universidade de Lisboa \& Universidade Federal de Pernambuco\\
alcoforado.renata@gmail.com\\
$^2\,$ISEG and CEMAPRE, Universidade de Lisboa\\
agnieszka@iseg.ulisboa.pt, alfredo@iseg.ulisboa.pt\\
$^3\,$CMA and FCT, Universidade Nova de Lisboa\\
rrc@fct.unl.pt\\
$^4\,$CEMAPRE \& Fidelidade Seguros, ISEG, Universidade de Lisboa\\
evrodriguez@gmail.com\\
}
\end{center}
%
\begin{abstract}
In this manuscript we consider the dual risk model with financial application, where the random gains occur under a renewal process. We particularly work the Erlang($n$) case for  common distribution of the inter-arrival times, from there it is easy to understand that our method or procedures can be generalised to other cases under the matrix-exponential family case. We work several and different problems involving future dividends and ruin. We also show that our results are valid even if the usual income condition is not satisfied.

In most known works under the dual model, the main target under study have been the calculation of expected discounted future dividends and optimal strategies, where the dividend calculation have been done on aggregate. We can find works, at first using the classical compound Poisson model, then  some examples of other renewal Erlang models. Knowing that ruin is ultimately achieved, we find important that dividends should be evaluated on an individual basis, where the early dividend contribution for the aggregate are of utmost importance. From our calculations we can really see  how much important is the contribution of the first dividend. \cite{afonso2013dividend} had worked similar problems for the classical compound Poisson dual model.

Besides that we find explicit formulae for both the probability of getting a dividend and the distribution of the amount of a single dividend. We still work the probability distribution of the number of gains to reach a given upper target (like a constant dividend barrier) as well as for the number of gains down to ruin. 

We complete the study working some illustrative numerical examples that show final numbers for the several problems under study. 

\ \\

{\bf Keywords}: Dual risk model; Ruin probability; Expected discounted dividends; Single dividend amount; Dividend probability; Number of gains.

\end{abstract}

\newpage
\section{Introduction and background}\label{s:intro}
In this manuscript we work the dual risk model with financial application where we specifically work different quantities related with ruin and dividend problems, besides working a ``{non standard}'' approach in the expected amounts of future dividends. Our base model is the renewal dual risk model where the ``standard'' income condition is not necessarily satisfied, as our results are robust to that. As far as the dividend expectation problem is concerned, we consider  an upper barrier level, but this time we don't focus on the expected discounted dividends as an aggregate amount, e.g.~as done in \cite{bergel2017dividends} among many authors. Instead, we take the approach by \cite{afonso2013dividend} done for the classical compound Poisson model and extend for other renewal models.  Like them we compute the expected aggregate discounted amount as a sum of expected discounted individual dividends and consider not only the first moment but also higher moments that are calculated recursively. Then we work other quantities such as the probability of reaching the upper barrier before ruin occurring, interpreted as the probability of reaching a dividend, the single dividend amount and its distribution. Similarly to \cite{egidio2002many} in the case of the classical Poisson insurance risk model, we work in the renewal dual model the distribution of the number of gains down to ruin and the number of gains to reach a given upper target or a dividend level.
In these two problems we work their probability functions. Also, we go back to \textit{standard} dual the model where no upper barriers are set, we mean  a portfolio with dividend free, and that the ruin level is not an absorbing barrier (the process will keep on even if it falls below zero).

%
 
We analytically work the Erlang$(n)$ renewal dual model and get an interesting set of explicit formulae for the problems involved, and it is easily understood that the same methods can be extended for other renewal models under the matrix-exponential family of distributions.   

Let's introduce the dual risk model, basic definitions and notation.
In the (renewal) dual risk model as
described, for instance, by \cite{avanzi2007optimal}, the surplus or equity of a
company at time $t$ is driven by equation, 
\begin{equation}
U(t)=u-ct+S(t)\text{, \ }t\geq 0\text{,}  \label{eq:dual}
\end{equation}
where $u$ is the initial surplus, $c$ is the constant rate of expenses, 
$S(t)=\sum_{i = 0}^{N(t)}X_i$, $X_0\equiv 0$, is the aggregate gains up to time $t$, $N(t)$ is number of gains up to time $t$, $\left\{ N(t),t\geq 0 \right\}$ is a renewal process, so that $\left\{ S(t),t\geq 0 \right\} $ is a compound renewal process. 
The sequence $\{X_i\}_{i=1}^\infty$ is a sequence of independent and identically distributed (shortly i.i.d., with $X\overset{d}{=}X_i$) random variables, each representing the individual gain amount, with 
common distribution function $P(\cdot)$, with $P(0)=0$, density function $p(\cdot)$ and Laplace transform $\hat{p}(\cdot)$. Shortly, we will write d.f.~, p.d.f.~and L.T., respectively. Survival function will be sometimes used and is denoted as $\bar{P}(x)=1-P(x)$. Similar notation may be used for other distribution functions.

$N(t)$ can be written as $N(t) = \max\{k:\, W_1 + W_2 + \cdots + W_k \leq t\}$, where $W_i$ is the waiting time between the ($k-1$)-th and the $k$-th {arrivals}, $k=2,\dots$, and $W_1$ is the first arrival time. We have a sequence of i.i.d. inter-arrival times $\{W_i\}_{i= 1}^\infty$, with $W\overset{d}{=}W_i$, and is independent of the sequence $\{X_i\}$. Common d.f., p.d.f.~and L.T.~  of  the inter-arrival times are respectively denoted as $K(\cdot)$, with $K(0)=0$, $k(\cdot)$ and $\hat{k}(\cdot)$. L.T.'s of some function will be always denoted with a ``$hat$'' in the corresponding function notation.

We assume the existence of the expectations $E(W)$ and $E(X)$, and we set $\mu=E(X)$. Relating this model to the standard insurance model (shortly, Primal Model) we just refer that in  Equation~\eqref{eq:dual} the signs of $c t$ and $S(t)$ are reversed, and we can keep all other assumptions. However, note that in any of those models for the respective application to make sense economically we need to set the so called \textit{net profit condition}. In each application it is reversed, that is, for the dual model the condition comes
\begin{equation}
c\, E(W) < E(X)\, , \label{eq:netprofit}
\end{equation}
whereas in the Primal Model the sign is reversed (``$>$''). Mathematically, many results are robust to the condition, taking into consideration that the ruin probability is one, exactly,  if the income condition is not satisfied in each model. 

In some parts of the text we will need to assume higher moments of $X$, other than $\mu$, in that case we will make it clear in the text. In such situations we  denote by $p_k={E} [X^k]$, $k=1,\dots$, so that $p_1=\mu$.

Let us continue introducing some quantities and notations for further developments, related with the dual model (unless stated clearly otherwise). First, consider the model completely free of barriers and define the Time to Ruin, denoted as $T_u$:
\begin{equation}
	T_{u} := \inf\{t > 0:\; U(t) = 0 |U(0)=u \}, \; u \geq 0\,
	\label{eq:ruintime}
\end{equation}
For a constant $\delta (\geq 0)$, the L.T. of the time to ruin, denoted as $\psi(u,\delta)$, is defined as 
\begin{equation}
\psi(u,\delta) := E(e^{-\delta T_{u}}\mathbb{1}(T_{u} < \infty)). \label{eq:psi_u_delta}
\end{equation}
Constant $\;\delta\;$ can be interpreted as an interest force. The probability of ultimate ruin, defined as $\psi(u) := P(T_{u} < \infty)$, can be obtained as $\psi(u)=\lim_{\delta \downarrow 0} \psi(u,\delta)$. Note that when the income condition is put in force $T_u$ is a defective random variable such that $\mathbb{P}[T_{u} = \infty]>0$. 
 If the income condition \eqref{eq:netprofit} is not fulfilled then  $\psi(u)=1$, and $T_u$ is a proper random variable. In any case, with or without putting in force the income condition, we get $\psi(u) = 1$ if an upper constant barrier $b\geq u$ is set for dividend purposes. We are going to consider this sort of barrier in the following.  

For an arbitrary upper level $b \geq u \geq 0$ define 
\begin{equation*}
\tau_{u} := \inf\{t > 0: U(t) > b \mid U(0) = u\}
\end{equation*}
as the time to reach $b$ for the surplus process, allowing the process to continue even if it crosses the ruin "0" level. When the net profit condition \eqref{eq:netprofit} is applied $\tau_{u}$ is a proper random variable since the probability of crossing $b$ is one. It is a defective random variable otherwise. 

Let us now consider the existence of both the upper dividend barrier and the lower absorbing ruin barrier.  The process, starting from $u$, will reach one of the barriers at some instant in the future with probability one. We define $\chi(u,b)$ as the probability of reaching an upper barrier level $b$ before ruin occurring, for a process with initial surplus $u$, and $\xi(u,b):= 1-\chi(u,b)$ is the probability of ruin before reaching $b$. We have $\chi(u,b) = Pr(\tau_{u} < T_{u})$.

Let $D_{u} := \{U(\tau_{u}) - b\}$ be the dividend amount, from initial surplus $u$, and $\{\tau_{u} <T_{u}\}  $  the event that allows the dividend to occur.  The (defective) distribution function of $\{D_u \leq x  \wedge \tau_{u} <T_{u}\}$ is denoted as $G(u,b;x)$, 
\begin{equation}
	\label{eq:Gubx}
\left. 
	\begin{array}{c}
	G(u,b;x) =  \mathbb{P}{(\tau_{u} < T_{u} \;\;\textrm{and}\;\; U(\tau_{u})} \leq b+x)\mid u,b)  \\
  \quad \quad = \mathbb{P}{(\tau_{u} < T_{u} \;\;\textrm{and}\;\; D_{u} \leq x)\mid u,b)}     
	\end{array}
		\right., \quad (0 < u \leq b)\quad x\geq 0\,,
\end{equation}
with $G(u,b;0)=0$ and respective density function $g(u,b;x) = \frac{d}{dx}G(u,b;x)$. Note that if $u=0$ event $\{\tau_{u} > T_{u}\}  $ is certain then $G(0,b;x)=0$. Also, if $b \leq u \leq b+x$ we have that $D_{u} = u-b$ and $G(u,b;x)=1$. 
 Function $G(u,b;x)$ in \eqref{eq:Gubx} is a defective distribution function when the income condition \eqref{eq:netprofit} is set (proper, otherwise). Clearly,
\begin{equation*}
\lim_{x \rightarrow \infty}G(u,b;x) = \mathbb{P}(\tau_{u} < T_{u}) = \chi(u,b) \,.
\end{equation*}

Also, on aggregate, let $D(u,b,\delta )$ denote the aggregate amount of discounted dividends, at rate $\delta>0$, initial surplus $u$ and dividend barrier $b$, and define $V_k(u;b,\delta):=\mathbb{E}[D(u,b,\delta )^k]$, $k\in \mathbb{N}$, as its $k$-th ordinary moment, with $V(u;b,\delta)=V_1(u;b,\delta)$ for simplicity sake.

 For the model without upper barriers, let us define $q(u,m)$ as the probability of having exactly $m$ gains prior to ruin, given initial surplus $u\,(\geq 0)$. We consider that if no gain arrives ruin occurs at time $t_0=u/c$. $q(u,m)$ is defined for $m=0,1,2,\dots\,$ \ 
 Also, define $r(u,b,m)$, $m=1,2,\dots$, as the probability of having exactly $m$ gains to reach an upper target $b$, like an upper barrier or a dividend barrier, given initial surplus $u \, (\geq 0)$, irrespective of ruin. When the target is reached it is exactly at the instant of a gain arrival, obviously, at least one gain is needed. 
 %
 
%

\

In ruin probability calculation, Lundberg's equations are of utmost importance, in either dual or primal risk model. For our renewal risk model the generalized Lundberg's equation can be written as
\begin{equation}\label{eq_Lund}
\hat{k}(\delta-cs)\hat{p}(s)=1 ,\; s\in \mathbb{C}\,. 
\end{equation}
If we let $\delta \downarrow 0$ we get the standard or fundamental Lundberg's equation. If $W_i$, $ i=1,2,\dots$, is Phase-Type($n$) distributed then both the generalized and fundamental  equations have $n$ roots with positive real parts, if income condition \eqref{eq:netprofit} is fulfilled. However, if \eqref{eq:netprofit} is not fulfilled the fundamental will have only $n-1$ of such roots (this corresponds to the primal model ruin problem situation described in \citep{li+garrido04a}). We will get back to this discussion later, when appropriately relevant, in Section~\ref{s:single_div}. The $n$ roots in each equation (generalised or fundamental) may not be all  distinct. For instance, 
\begin{itemize}
	\item  
If $W\frown \text{Erlang}(n,\lambda)$ then all roots are distinct;
	\item If $W\frown \text{generalised Erlang}(n,\lambda)$ we can have double roots; and 
	\item We can have higher root multiplicity for other kinds of distributions in the Phase-Type family.
\end{itemize}
 We can find a discussion on this in \cite{bergel2014sparre}, \cite{bergel2015further}, \cite{bergel2016ruin} and \cite{bergel2017dividends}. 
 
We remark (somehow again) that in spite of addressing sometimes the primal model, in this paper we work the dual risk model only, where the income condition {\eqref{eq:netprofit} usually applies}, giving economic sense to the application. These two models address different problems, however, there are many contact points: Many mathematical developments are robust to the condition, and some problems have many similarities, like the importance of the Lundberg's equations, and the dividends and its distribution in the dual having similarities with the severity of ruin in the primal.

We will be summarising first some known problems in the literature and then our developments for the different ruin and dividend problems. Some findings are more general than others, some are general for renewal risk models, some will need the distribution of the inter-arrival time, $W$, to be disclosed, some even need the distribution of single gain, $X$, to be assumed. We remark that when we particularize the distribution  of $W$, we use the {$\text{Erlang}(n,\lambda)$} example to show how we can find a solution. 
In that case we have that $K(t) =1- \sum_{i=0}^{n-1} e^{-\lambda t}\left( \lambda t \right)^i / i!$,  and $k(t)={\lambda^n t^{n-1}e^{-\lambda t}} / {(n-1)!}$, $t\geq 0$.
Similar approaches can be used and extended for more general matrix exponential distributions.
Taking into account the works by \cite{bergel2016ruin} and \cite{bergel2017dividends} for instance, the  techniques used can be extended straightforwardly for the generalized Erlang or even for a general Phase--Type$(n)$ distribution. 

We particularly recall that regarding the {$\text{Erlang}(n,\lambda)$} law, $n\in \mathbb{N}$,  we will make use some well known properties: for $n=2,3,\dots$, where respective d.f.~and p.d.f., $K(.)$ and $k(.)$ are sometimes conveniently denoted as $K_n(.)$ and $k_n(.)$ to enhance {the} identifying parameter,
\begin{eqnarray}
k'_n(t) &=&\lambda\left(k_{n-1}(t)-k_n(t)\right)\,; \nonumber \\
k^{(i)}_n(0) & =& 0\,, \text{ for } i=0,1,\dots,n-2; \label{eq:k-derivatives} \\
k^{(n-1)}_n(0) &=&\lambda^n. \nonumber
\end{eqnarray} 

For instance, for the expected discounted ruin probability in \eqref{eq:psi_u_delta}, we retrieve from \cite{rodriguez2015some} the general renewal equation, with $t_0=u/c$, 
\begin{equation}
	\psi (u,\delta )=\left( 1-K_{W}\left( t_{0}\right) \right) e^{-\delta
		t_{0}}
+\int_{0}^{t_{0}}k_{W}\left( t\right) e^{-\delta t}\int_{0}^{\infty
	}p(x)\psi (u-ct+x,\delta )dxdt\, \text{.}  \label{eq:psi_u_delta2}
\end{equation}
From there, particularizing for $Erlang(n,\lambda)$ dual model, with $W\frown \text{Erlang}(n)$, we get the integro-differential equation (briefly IDE) in the next theorem, where \textsc{$\mathcal{D}$} is the differential operator, $\mathcal{D}^i$  responds for the $i$-th derivative,  with the identity operator $\mathcal{D}^0=\mathcal{I}$. If we find necessary we write in the index the operand variable, e.g.~in the following it would be $\mathcal{D}_u=\frac{d}{du}$ and $\mathcal{D}_u^k=\frac{d^k}{du^k}$. 
%
\begin{theorem}
	\label{Theor_intdif1}
	In the Erlang($n,\lambda$) dual risk model the Laplace
	transform of the time of ruin satisfies the integro--differential equation 
	\begin{equation*}
	\left( \left( 1+\frac{\delta }{\lambda }\right) \mathcal{I}+\left( \frac{c}{%
		\lambda }\right) \mathcal{D}\right) ^{n}\psi (u,\delta )=\int_{0}^{\infty
	}p(x)\psi (u+x,\delta )dx\text{,}  \label{eq6}
	\end{equation*}%
	with boundary conditions 
	\begin{equation*}
	\psi (0,\delta )=1,\quad \displaystyle\left. \frac{d^{i}}{du^{i}}\psi
	(u,\delta )\right\vert _{u=0}=(-1)^{i}\left( \frac{\delta }{c}\right)
	^{i},\;i=1,\ldots ,n-1\text{.}  
	\label{eq9}
	\end{equation*}
\end{theorem}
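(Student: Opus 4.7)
The plan is to reduce the renewal equation \eqref{eq:psi_u_delta2} to a pure convolution identity, differentiate $n$ times so that the convolution collapses, and then repackage the resulting differential equation in the operator form stated. I would start by substituting the explicit Erlang densities $k_n(t)=\lambda^n t^{n-1}e^{-\lambda t}/(n-1)!$ and $\bar K_n(t)=e^{-\lambda t}\sum_{i=0}^{n-1}(\lambda t)^i/i!$ into \eqref{eq:psi_u_delta2}, performing the change of variable $s=u-ct$ in the inner integral, and multiplying both sides by $e^{(\delta+\lambda)u/c}$. The point of that prefactor is that it absorbs every exponential in sight: the survival term becomes the polynomial $\sum_{i=0}^{n-1}(\lambda u/c)^i/i!$, and the integrand simplifies to $(u-s)^{n-1}e^{(\delta+\lambda)s/c}H(s)$ up to constants, where $H(v)=\int_0^\infty p(x)\psi(v+x,\delta)\,dx$. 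The identity then takes the clean form
$$e^{(\delta+\lambda)u/c}\psi(u,\delta) = \sum_{i=0}^{n-1}\frac{(\lambda u/c)^i}{i!} + \frac{\lambda^n}{c^n (n-1)!}\int_0^u (u-s)^{n-1}e^{(\delta+\lambda)s/c}H(s)\,ds.$$

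Next I would apply $\mathcal{D}_u^n$ to both sides. The polynomial of degree $n-1$ is annihilated, and the standard identity $\mathcal{D}_u^n\int_0^u (u-s)^{n-1}f(s)\,ds = (n-1)!\,f(u)$ collapses the integral term to $(\lambda^n/c^n)e^{(\delta+\lambda)u/c}H(u)$. On the left-hand side the Leibniz identity $\mathcal{D}^n[e^{au}\psi]=e^{au}(\mathcal{D}+a)^n\psi$ with $a=(\delta+\lambda)/c$ lets me strip off the common exponential, and factoring $(\lambda/c)^n$ outside yields exactly
$$\bigl((1+\delta/\lambda)\mathcal{I}+(c/\lambda)\mathcal{D}\bigr)^n\psi(u,\delta) = \int_0^\infty p(x)\psi(u+x,\delta)\,dx,$$
which is the IDE in the theorem.

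For the boundary conditions I would evaluate the convolution identity together with its first $n-1$ derivatives at $u=0$. The $k$-th derivative of the integral term is a constant multiple of $\int_0^u (u-s)^{n-1-k}e^{(\delta+\lambda)s/c}H(s)\,ds$ for $k\le n-1$, which vanishes at $u=0$, and $\mathcal{D}_u^k\sum_{i=0}^{n-1}(\lambda u/c)^i/i!|_{u=0}=(\lambda/c)^k$. So $\mathcal{D}^k[e^{au}\psi(u,\delta)]|_{u=0}=(\lambda/c)^k$ for $k=0,\ldots,n-1$; equivalently, $e^{au}\psi(u,\delta)=\sum_{k=0}^{n-1}(\lambda u/c)^k/k!+O(u^n)$ near zero, so $\psi(u,\delta)=e^{-\delta u/c}\bigl(1+O(u^n)\bigr)$, which immediately gives $\psi(0,\delta)=1$ and $\psi^{(i)}(0,\delta)=(-\delta/c)^i$ for $i=1,\ldots,n-1$.

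The main obstacle is purely bookkeeping: verifying that the repeated differentiation of the convolution is legal and that no boundary contribution from Leibniz's rule at the upper limit $s=u$ survives. The $e^{(\delta+\lambda)u/c}$ trick is precisely what avoids those contributions, because it turns the Erlang kernel $k_n((u-s)/c)e^{-\delta(u-s)/c}$ into the polynomial kernel $(u-s)^{n-1}$, whose relevant boundary terms all sit in the vanishing order range guaranteed by \eqref{eq:k-derivatives}. Everything else is a one-line application of $\mathcal{D}^n$.
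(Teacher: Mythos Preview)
Your argument is correct. The exponential prefactor $e^{(\delta+\lambda)u/c}$ does exactly what you claim: it strips the exponential decay from both the survival term and the Erlang kernel, leaving a degree-$(n-1)$ polynomial plus an $(n-1)$-fold repeated integral, so a single application of $\mathcal{D}_u^n$ finishes the IDE. Your boundary-condition argument is also clean: from $e^{au}\psi(u,\delta)=\sum_{i=0}^{n-1}(\lambda u/c)^i/i!+O(u^n)$ one gets $\psi(u,\delta)=e^{-\delta u/c}+O(u^n)$ and the derivatives at $0$ follow.

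This is, however, a genuinely different route from the paper's. The paper does not prove Theorem~\ref{Theor_intdif1} directly (it is quoted from \cite{rodriguez2015some}), but the method used throughout the paper for structurally identical results---see the proofs of Theorems~\ref{thm_intdif1}, \ref{th_G(u,b;x)_ide}, \ref{theo_gain_no} and~\ref{theo:gainup_no_ide}---is an inductive ladder based on the Erlang recursion $k_n'(t)=\lambda(k_{n-1}(t)-k_n(t))$ from~\eqref{eq:k-derivatives}. There one differentiates once, obtaining $\bigl((1+\delta/\lambda)\mathcal{I}+(c/\lambda)\mathcal{D}\bigr)\psi_n=\psi_{n-1}$ with $\psi_j$ the analogue for Erlang$(j,\lambda)$ waiting times, and then iterates the operator $n$ times to reach the exponential base case. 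Your approach collapses all $n$ steps at once and avoids the induction bookkeeping, at the cost of hiding the structural link between the Erlang$(n)$ and Erlang$(n-1)$ models that the paper's method makes explicit; the paper's recursive style, on the other hand, ports immediately to generalised Erlang and Phase-Type settings where the kernel is not a single monomial times an exponential.
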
 
\begin{theorem}
	\label{Theor_psiu}{\normalsize In the Erlang($n,\lambda$) dual risk model, the Laplace transform of the time of ruin can
		be written as a combination of exponential functions} 
	\begin{equation*}
	\psi (u,\delta )=\sum_{k=1}^{n}\left( \prod_{i=1,i\neq k}^{n}\frac{\rho _{i}-%
		\frac{\delta }{c}}{\rho _{i}-\rho _{k}}\right) e^{-\rho _{k}u}\text{,}
	\label{eq7}
	\end{equation*}%
	where $\;\rho _{1},\ldots ,\rho _{n}\;$ are the only roots of the
	generalized Lundberg's equation which have positive real parts.
\end{theorem}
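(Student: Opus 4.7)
The plan is to reduce the integro-differential equation of Theorem~\ref{Theor_intdif1} to an algebraic system by an exponential ansatz, recognize the positive-real-part roots of the generalized Lundberg equation as the admissible exponents, and then solve for the mixing coefficients via Lagrange interpolation.

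First, I would plug $\psi(u,\delta)=e^{-\rho u}$ into the IDE of Theorem~\ref{Theor_intdif1}. Since $\mathcal{D}e^{-\rho u}=-\rho e^{-\rho u}$, the left-hand side becomes $\bigl((\lambda+\delta-c\rho)/\lambda\bigr)^{n}e^{-\rho u}$, while the right-hand side equals $\hat{p}(\rho)e^{-\rho u}$. Cancelling $e^{-\rho u}$ and multiplying through by $(\lambda/(\lambda+\delta-c\rho))^{n}=\hat{k}(\delta-c\rho)$, which is the Laplace transform of the Erlang$(n,\lambda)$ density evaluated at $\delta-c\rho$, we recover precisely the generalized Lundberg equation~\eqref{eq_Lund}. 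Hence every root $\rho$ of \eqref{eq_Lund} produces an exponential solution of the IDE.

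Second, since $\psi(u,\delta)=\E[e^{-\delta T_u}\mathbb{1}(T_u<\infty)]$ is bounded on $[0,\infty)$, only roots with positive real parts may contribute to $\psi$. By the discussion following~\eqref{eq_Lund}, the Erlang$(n,\lambda)$ case yields exactly $n$ such roots, all distinct, say $\rho_{1},\ldots,\rho_{n}$. I therefore take
$$\psi(u,\delta)=\sum_{k=1}^{n}A_{k}e^{-\rho_{k}u},$$
and determine the $A_{k}$ from the $n$ boundary conditions of Theorem~\ref{Theor_intdif1}, namely $\sum_{k}A_{k}=1$ and $\sum_{k}A_{k}\rho_{k}^{i}=(\delta/c)^{i}$ for $i=1,\ldots,n-1$. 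This is an $n\times n$ Vandermonde system in $(A_{1},\ldots,A_{n})$, non-singular because the $\rho_{k}$ are distinct.

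Third, I would solve the Vandermonde system by Lagrange interpolation. For each polynomial $P(x)=x^{i}$ with $0\le i\le n-1$, the Lagrange identity at nodes $\rho_{1},\ldots,\rho_{n}$, evaluated at $x=\delta/c$, gives
$$(\delta/c)^{i}=\sum_{k=1}^{n}\rho_{k}^{i}\prod_{j\neq k}\frac{\delta/c-\rho_{j}}{\rho_{k}-\rho_{j}}=\sum_{k=1}^{n}\rho_{k}^{i}\prod_{j\neq k}\frac{\rho_{j}-\delta/c}{\rho_{j}-\rho_{k}}.$$
Matching this with the boundary conditions identifies $A_{k}=\prod_{j\neq k}(\rho_{j}-\delta/c)/(\rho_{j}-\rho_{k})$, which is exactly the claimed formula.

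The main obstacle is the uniqueness/completeness step: one must argue that bounded solutions to the IDE form an $n$-dimensional space spanned by the exponentials $\{e^{-\rho_{k}u}\}_{k=1}^{n}$, so that matching the $n$ boundary conditions is both necessary and sufficient. This follows from the standard characteristic-root analysis of the functional equation (the full Lundberg equation has $2n$ roots, but those with non-positive real parts give unbounded components that must have zero coefficient), combined with the non-singularity of the Vandermonde matrix guaranteeing a unique admissible choice of $A_{k}$.
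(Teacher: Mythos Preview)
The paper does not actually supply its own proof of this theorem: both Theorem~\ref{Theor_intdif1} and Theorem~\ref{Theor_psiu} are stated in the introduction as results retrieved from \cite{rodriguez2015some}, with no accompanying proof environment. So there is no in-paper argument to compare against.

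That said, your argument is correct and self-contained. The exponential ansatz reduces the IDE to $\bigl((\lambda+\delta-c\rho)/\lambda\bigr)^{n}=\hat{p}(\rho)$, which is exactly the Erlang form of the generalized Lundberg equation~\eqref{eq_Lund}; boundedness of $\psi(u,\delta)\in[0,1]$ discards all but the $n$ positive-real-part roots, which the paper explicitly notes are distinct in the Erlang$(n,\lambda)$ case; and your Lagrange-interpolation step cleanly identifies the coefficients from the Vandermonde system coming from the boundary conditions of Theorem~\ref{Theor_intdif1}. The only point that could be tightened is the completeness claim---that every bounded solution of the IDE lies in the span of $\{e^{-\rho_k u}\}$---which you acknowledge; this is standard for linear IDEs with exponential kernels and is implicit in the cited reference, so the argument stands.
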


We show that similar approaches can be used for other ruin and dividend problems as we'll study in the next sections.
The manuscript evolves as follows. In Section~\ref{s:expect_div} we work expectations of discounted future dividends, in Section~\ref{s:single_div} we develop the distribution of a single dividend amount as well as the probability of getting a single dividend. Section~\ref{s:no_ruin} and \ref{s:no_div} we work with gain counts down to ruin and up to dividend, respectively. Finally, in the last section we work some illustrative numercial examples.

\section{Expected Discounted Dividends}
\label{s:expect_div}
In this section we calculate expectations for discounted future dividends, on aggregate or considering each single dividend.  On aggregate we could retrieve formulae like those worked by \cite{avanzi2007optimal}, \cite{cheung+drekic08},  \cite{ng09}, \cite{ng10}, or we can use the method which separates each individual dividend, properly discounted,  introduced by \cite{afonso2013dividend} for the {$\text{Erlang}(1,\lambda)$} case (classical compound Poisson). We get closed formulae.

First, we need to consider two barriers: one reflecting and one absorbing. The reflecting barrier is an upper dividend barrier, an arbitrary level non-negative value $b$. If $b<u$ the surplus immediately is set at $b$, so this an uninteresting situation. We mostly consider the case where $b \geq u \geq 0$. The absorbing barrier is the ruin level zero. We refer to the upper graph of Figure~1 from \cite{afonso2013dividend}.

For a general compound renewal dual model, we can generalise the formulae for the total expected dividends from \cite{afonso2013dividend}. Although it was developed for the classical compound Poisson model it is easy to see that its application is more general. The {expected} total discounted dividends $V(u;b,\delta )$ is given in the following theorem,
\begin{theorem} For the renewal dual risk model
	\begin{eqnarray*}
V(u;b,\delta) & = & \mathbb{E}[D(u,b,\delta )]=\mathbb{E}\left[ \sum\limits_{i=1}^{%
	\infty }e^{-\delta \left( \sum_{j=1}^{i}T_{(j)}\right) }D_{(i)}\right] \text{
	, }0\leq u\leq b\text{ ,}\\
&=&\mathbb{E}\left( e^{-\delta \tau_{u}}D_{u}\right) +\mathbb{E}\left(
e^{-\delta \tau_{u}}\right) \frac{\mathbb{E}\left( e^{-\delta
		\tau_{b}}D_{b}\right) }{1-\mathbb{E}\left( e^{-\delta \tau_{b}}\right) }\text{,}
\end{eqnarray*}
where $T_{(1)}=\tau_u$, $D_{(1)}=D_u$, $T_{(i)}$ and $D_{(i)}$, {for} $i=2,3,\dots$, are single dividend and dividend waiting time $i$, or replicas of $\tau_b$ and $D_b$, respectively.
\end{theorem}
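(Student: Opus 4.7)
The first equality in the statement is just the definition of $V(u;b,\delta)$: the aggregate discounted dividends process $D(u,b,\delta)$ sums the $i$-th dividend amount $D_{(i)}$ discounted back to time zero by the factor $e^{-\delta\sum_{j=1}^{i}T_{(j)}}$, where $T_{(j)}$ denotes the waiting time between the $(j-1)$-th and the $j$-th dividend (with $T_{(1)}=\tau_u$). So my plan is to start from this definition, interchange expectation and the (positive-termed) infinite sum by Tonelli, and then evaluate each $\mathbb{E}\bigl[e^{-\delta\sum_{j=1}^{i}T_{(j)}}D_{(i)}\bigr]$ via a regenerative argument.

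The key structural observation is that a dividend can be paid only at a gain arrival, since between gains the surplus is strictly decreasing at rate $c$. At the instant a dividend is paid, the surplus is reflected back to $b$, and because the gain-counting process $\{N(t)\}$ is a renewal process, the inter-arrival clock also restarts at that epoch. Consequently, conditional on the $i$-th dividend being paid, the post-dividend evolution of the surplus process from level $b$ is independent of the past and is an exact probabilistic replica of the original process started at $b$. This yields the crucial distributional identity: the pairs $(T_{(i)},D_{(i)})$ for $i\ge 2$ are i.i.d.\ copies of $(\tau_b,D_b)$, and they are jointly independent of $(T_{(1)},D_{(1)})=(\tau_u,D_u)$.

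With this in hand, the calculation is routine. For the first term ($i=1$) I get $\mathbb{E}\bigl[e^{-\delta\tau_u}D_u\bigr]$. For $i\ge 2$, the independence factorises the joint expectation as
\begin{equation*}
\mathbb{E}\!\left[e^{-\delta\sum_{j=1}^{i}T_{(j)}}D_{(i)}\right]
= \mathbb{E}\bigl[e^{-\delta\tau_u}\bigr]\,\bigl(\mathbb{E}\bigl[e^{-\delta\tau_b}\bigr]\bigr)^{i-2}\,\mathbb{E}\bigl[e^{-\delta\tau_b}D_b\bigr],
\end{equation*}
because $T_{(2)},\ldots,T_{(i-1)}$ contribute only through their Laplace transforms, while the $i$-th factor keeps the $D_{(i)}$ attached to its own waiting time $T_{(i)}$. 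Summing a geometric series in $\mathbb{E}[e^{-\delta\tau_b}]$ (whose modulus is strictly less than one when $\delta>0$, even without the net-profit condition) yields the stated closed form.

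The main obstacle is rigorously justifying the regenerative step, since the underlying surplus process is not Markov when $W$ is non-exponential. The argument relies on the fact that dividends occur only at renewal epochs of $N(t)$, at which point the classical renewal-process regeneration gives the required strong independence between pre- and post-dividend histories (given the common restart level $b$). Once this is stated carefully, the algebraic identities collapse immediately, and the formula for $V(u;b,\delta)$ follows without needing to specify the distributions of $W$ or $X$, which is why the same expression holds for the general renewal dual model and not only the Poisson case treated in \cite{afonso2013dividend}.
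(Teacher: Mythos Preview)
Your proposal is correct and follows exactly the approach the paper intends: the paper's own proof is a single sentence referring to Formula~(4.2) of \cite{afonso2013dividend}, and what you have written is precisely the regenerative argument underlying that formula, together with the observation (which you make explicit) that dividends occur only at renewal epochs so the argument carries over verbatim from the Poisson to the general renewal setting. There is nothing to add.
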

	\begin{proof}
	Immediate, using the same arguments as those of Formula~(4.2) from \cite{afonso2013dividend}. \hfill
	\end{proof}

%
%
For higher moments, $V_n(u;b,\delta)$ recursion given by (4.7-8) in  \cite{afonso2013dividend} also applies similarly, we reproduce,
\begin{theorem}
	For the renewal dual risk model
	\begin{equation}
\label{eq:vnub_2}
V_{n}(u;b,\delta )=\sum_{k=0}^{n}\binom{n}{k}\mathbb{E}\left[ e^{-n\delta
	\tau_{u}}{D_{u}}^{k}\right] V_{n-k}(b;b,\delta )\,\text{,}
\end{equation}%
with $V_{0}(b;b,\delta )=1$ and
\begin{equation}
\label{eq:vnbb}
V_{n}(b;b,\delta )=\frac{\sum_{k=1}^{n}\binom{n}{k}\mathbb{E}\left[
	e^{-n\delta \tau_{b}}{D_{b}}^{k}\right] V_{n-k}(b;b,\delta )}{1-\mathbb{E}\left[
	e^{-n\delta \tau_{b}}\right] }\,.
\end{equation}
\end{theorem}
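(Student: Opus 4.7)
The plan is to use the strong Markov (regenerative) decomposition of the aggregate discounted dividend stream at the first dividend epoch $\tau_u$, exactly as was used to obtain the first-moment formula of the previous theorem, but now applied to the $n$-th power of $D(u,b,\delta)$.

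First I would write the fundamental path decomposition
\begin{equation*}
D(u,b,\delta)=e^{-\delta\tau_{u}}\bigl(D_{u}+\widetilde{D}(b,b,\delta)\bigr),
\end{equation*}
where $\widetilde{D}(b,b,\delta)$ denotes the total discounted dividend stream generated after time $\tau_{u}$, measured from time $\tau_{u}$ onwards. By the strong Markov property of the renewal surplus process at $\tau_{u}$ (together with the fact that just after paying the dividend the surplus is reset to $b$), the variable $\widetilde{D}(b,b,\delta)$ is distributed as $D(b,b,\delta)$ and is independent of the pair $(\tau_{u},D_{u})$. This is the same regenerative argument invoked in \cite{afonso2013dividend} for the classical model, and it is robust to having renewal inter-arrival times.

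Next I would raise the decomposition to the $n$-th power and expand by the binomial theorem,
\begin{equation*}
D(u,b,\delta)^{n}=e^{-n\delta\tau_{u}}\sum_{k=0}^{n}\binom{n}{k}D_{u}^{k}\,\widetilde{D}(b,b,\delta)^{\,n-k},
\end{equation*}
and then take expectations. The independence of $\widetilde{D}(b,b,\delta)$ from $(\tau_{u},D_{u})$ lets the expectation factor, and since $\mathbb{E}[\widetilde{D}(b,b,\delta)^{n-k}]=V_{n-k}(b;b,\delta)$, we arrive directly at \eqref{eq:vnub_2}. Specializing to $u=b$ gives
\begin{equation*}
V_{n}(b;b,\delta)=\mathbb{E}[e^{-n\delta\tau_{b}}]\,V_{n}(b;b,\delta)+\sum_{k=1}^{n}\binom{n}{k}\mathbb{E}\!\left[e^{-n\delta\tau_{b}}D_{b}^{k}\right]V_{n-k}(b;b,\delta),
\end{equation*}
and solving for $V_{n}(b;b,\delta)$ yields \eqref{eq:vnbb}. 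The right-hand side involves only $V_{0}(b;b,\delta),\dots,V_{n-1}(b;b,\delta)$, so the pair of formulas is a bona fide recursion starting from $V_{0}(b;b,\delta)=1$.

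The step that requires the most care is the regenerative decomposition itself: one must verify that $\tau_{u}$ is a stopping time and that the post-$\tau_{u}$ process, seen with initial surplus $b$, is an independent copy of the original dual risk model. In a compound Poisson setting this is immediate, but in a general renewal setting one has to observe that the upcrossing of $b$ occurs exactly at a claim (gain) epoch, so the residual inter-arrival clock restarts and the post-$\tau_{u}$ path is indeed a probabilistic replica, independent of the past. A secondary technical point is that $\mathbb{E}[e^{-n\delta\tau_{b}}]<1$ for $\delta>0$, which is what makes the denominator in \eqref{eq:vnbb} strictly positive; this follows from $\mathbb{P}(\tau_{b}>0)=1$ together with $\delta>0$.
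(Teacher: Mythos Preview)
Your proposal is correct and follows exactly the regenerative decomposition that the paper has in mind: the paper does not write out its own proof but simply states that ``recursion given by (4.7--8) in \cite{afonso2013dividend} also applies similarly'' to the renewal setting, and what you have written is precisely that argument. Your explicit remark that the upcrossing of $b$ occurs at a gain epoch---so that the renewal clock restarts and the post-$\tau_u$ process is an independent copy started at $b$---is exactly the point that justifies carrying the compound-Poisson argument over to the renewal model.
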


Now, we need to develop appropriate formulae for the discounted single dividend expectations. Denote by $\phi_k(u)=\mathbb{E}\left[ e^{-\delta \tau_{u}}{D_{u}}^{k}\right]$, with $k=0,1,2,\dots$ Note that for $u>b$, $\phi_k(u)=(u-b)^k$. We have the following theorem:
	%
	\begin{theorem}\label{thm_intdif1}
		In the Erlang($n,\lambda$) dual risk model, $\phi_k(u)$ satisfies  the integro--differential equation
		\begin{equation}\label{eq_ide_phik}
		\left( \left( 1+\frac{\delta }{\lambda }\right) \mathcal{I}+\left( \frac{c}{%
			\lambda }\right) \mathcal{D}\right) ^{n}\phi_k(u) =\omega{_{\phi}}(u) 
		\end{equation}%
		with
		\begin{align} \label{eq_Wu}
		\begin{split}
		\omega
		{_{\phi}}(u) &= 
		\int_{0}^{b-u}\phi_k(u+y)p(y)dy + \int_{b-u}^{\infty
		} (u+y-b)^kp(y)dy\\
		&
		{=\int_{u}^{b}\phi_k(y)p(y-u)dy + \int_{b}^{\infty
		} (y-b)^kp(y-u)dy}
		\text{,} 
		\end{split}
		\end{align}
		with boundary conditions 
		\begin{equation}
		\label{eq_Wubc}
		\quad \displaystyle\left. \phi_k^{(i)}
		(u)\right\vert _{u=0}=0\,,\;\;i=0,1,\ldots ,n-1\text{.}  
		\end{equation}
	\end{theorem}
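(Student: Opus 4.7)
The plan is to follow the template of Theorem~\ref{Theor_intdif1}, deriving first a defective renewal equation for $\phi_k(u)$ by conditioning on the first gain, and then converting it into the IDE via the Erlang structure of $k(t)$.

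First I would condition on the first inter-arrival time $W_1 = t$ and the first gain $X_1 = y$. For $t \leq u/c$ the pre-jump surplus $u-ct$ is non-negative; the strong Markov property then contributes $e^{-\delta t}\phi_k(u-ct+y)$ when the post-jump surplus stays below $b$ (i.e.\ $y < b-(u-ct)$), and $e^{-\delta t}(u-ct+y-b)^k$ when it crosses $b$ (i.e.\ $y \geq b-(u-ct)$), so that integration in $y$ against $p(\cdot)$ produces exactly $\omega_\phi(u-ct)$ as given in~\eqref{eq_Wu}. For $t > u/c$ ruin occurs before any gain and the contribution vanishes, truncating the outer integral at $u/c$ just as in~\eqref{eq:psi_u_delta2}. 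The result is
$$
\phi_k(u) = \int_0^{u/c} k(t)\,e^{-\delta t}\,\omega_\phi(u-ct)\,dt.
$$

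Next I would substitute the Erlang($n,\lambda$) density and change variable $s = u-ct$ to rewrite this as the convolution
$$
\phi_k(u) = (A*\omega_\phi)(u), \qquad A(x) := \frac{\lambda^n}{c^n (n-1)!}\, x^{n-1} e^{-(\lambda+\delta)x/c},\;\; x\geq 0.
$$
Since $A$ is the Green's function for the $n$-th power of the first-order operator $\mathcal{D}_u + (\lambda+\delta)/c$, an $n$-step induction collapses the kernel and gives $(\mathcal{D}_u + (\lambda+\delta)/c)^n \phi_k(u) = (\lambda^n/c^n)\,\omega_\phi(u)$; multiplying by $(c/\lambda)^n$ rewrites the operator in the factored form $((1+\delta/\lambda)\mathcal{I}+(c/\lambda)\mathcal{D})^n$ and yields~\eqref{eq_ide_phik}. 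The boundary conditions~\eqref{eq_Wubc} fall out of the same convolution picture: Leibniz differentiation gives $\phi_k^{(i)}(0) = \sum_{j=0}^{i-1} A^{(j)}(0)\, \omega_\phi^{(i-1-j)}(0)$, and $A^{(j)}(0)=0$ for $j\leq n-2$ (the same reason behind the identities \eqref{eq:k-derivatives} for $k_n$) forces $\phi_k^{(i)}(0)=0$ for every $i\leq n-1$.

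The step that requires the most care is the first one: reassembling the two $y$-regimes into the compact function $\omega_\phi(u-ct)$ of~\eqref{eq_Wu}, and handling the convention that $e^{-\delta\tau_u}D_u^k$ vanishes on the event that ruin precedes any gain (so the outer integral truly truncates at $t=u/c$ rather than running to infinity). Once this bookkeeping is done, the passage to the IDE is a direct transcription of the Erlang-operator argument already used for $\psi(u,\delta)$ in Theorem~\ref{Theor_intdif1}.
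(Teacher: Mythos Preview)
Your proof is correct and follows essentially the same route as the paper: derive the renewal equation by conditioning on the first gain (truncated at $t_0=u/c$), rewrite it as a convolution, and then use the Erlang structure to collapse to the IDE. The only cosmetic difference is that where you invoke the Green's-function property of the kernel $A$ and the vanishing of $A^{(j)}(0)$ for $j\le n-2$, the paper carries out the equivalent computation by an explicit ladder induction, introducing auxiliary functions $\phi_{k,n-i}$ with lower Erlang index and showing that one application of the operator $\bigl((1+\delta/\lambda)\mathcal{I}+(c/\lambda)\mathcal{D}\bigr)$ sends $\phi_{k,n}$ to $\phi_{k,n-1}$.
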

	\begin{proof} 
	First we note that for the general  renewal dual risk model, we have, conditioning on the time and amount of the first gain, that
\begin{eqnarray*}
	\phi_k(u) &= & \int_0^{t_0} k \left( t\right) e^{-\delta t} \times \\
	 && \text{ \ \ } \left\{\int_{0}^{b-(u-ct)} \phi_k(u-ct+y) p(y)dy +   \int^{\infty}_{b-(u-ct)} (u-ct+y-b)^k p(y)dy \right\} dt\text{,} 
\end{eqnarray*}
where $t_0$ is such that $u-ct_0=0$. Changing the integration variable, $s=u-ct$, we get
 $$
 \phi_k(u)=\frac{1}{c} \int_0^{u} k\left( {(u-s)}/{c}\right) e^{-\delta\left(   \frac{u-s}{c}\right)} \omega{_{\phi}}(s) ds\,,
 $$
 with $\omega{_{\phi}}(s)$ given by \eqref{eq_Wu}.
 
Now, we particularly assume that $W\frown \text{Erlang}(n,\lambda)$ distribution. For the remaining of the proof we use an inductive argument and so we conveniently write $k(.)=k_{{i}}(.)$, since we need to consider and denote differently shape parameter $i$ of Erlang$(i,\lambda)$ distributions, of their  p.d.f.~or d.f.'s $k_i(\cdot)$ and $K_i(\cdot)$, respectively, as well as corresponding $\phi_k(u)=\phi_{k,i}(u)$, $i=1,2,\dots$ \ In what follows we use properties~\eqref{eq:k-derivatives}.

Now, we differentiate with respect to $u$ and get, for $n=2,3,\dots$, 
\begin{eqnarray*}
\phi_{k,n}'(u)&=& \frac{1}{c} \int_0^{u}  \left[ \frac{1}{c} \, k_n' \left( \frac{u-s}{c}\right)
-\frac{\delta}{c} \,k_n \left( \frac{u-s}{c}\right) \right] e^{-\delta\left(   \frac{u-s}{c}\right)} \omega_{\phi}(s) ds\\
&=&\frac{\lambda}{c}\, \phi_{k,n-1}(u)-\frac{\delta +\lambda}{c}\, \phi_{k,n}(u)
\end{eqnarray*}
or equivalently,
$$
\left( \left( 1+\frac{\delta }{\lambda }\right) \mathcal{I}+\left( \frac{c}{%
	\lambda }\right) \mathcal{D}\right) \phi_{k,n}(u)= \phi_{k,n-1}(u) \,.
$$
Assuming that, for $i=2,\dots n-1$
$$\left( \left( 1+\frac{\delta }{\lambda }\right) \mathcal{I}+\left( \frac{c}{%
	\lambda }\right) \mathcal{D}\right)^i \phi_{k,n}(u)= \phi_{k,n-i}(u)\,,$$
we get
\begin{eqnarray*}
\left( \left( 1+\frac{\delta }{\lambda }\right) \mathcal{I}+\left( \frac{c}{%
	\lambda }\right) \mathcal{D}\right)^{i+1} \phi_{k,n}(u)
&=& \left( \left( 1+\frac{\delta }{\lambda }\right) \mathcal{I}+\left( \frac{c}{%
	\lambda }\right) \mathcal{D}\right)\phi_{k,n-i}(u)\\
&& \hspace{-2cm} =\left( 1+\frac{\delta }{\lambda }\right)\phi_{k,n-i}(u)+\frac{c}{\lambda}\, \left[ 
\frac{\lambda}{c}\phi_{k,n-i-1}(u)-\frac{\delta +\lambda}{c}\,\phi_{k,n-i}(u)
\right]\\
&& \hspace{-2cm}={\phi_{k,n-(i+1)}(u)\,,}
\end{eqnarray*}
particularly for $i=n-1$,
$$
\left( \left( 1+\frac{\delta }{\lambda }\right) \mathcal{I}+\left( \frac{c}{%
	\lambda }\right) \mathcal{D}\right)^{n-1} \phi_{k,n}(u)= \phi_{k,1}(u)
$$
where $\phi_{k,1}(u)$ has derivative
$$
\phi_{k,1}'(u)=-\frac{\delta +\lambda}{c}\, \phi_{k,1}(u)+\frac{\lambda}{c}\, \omega{_{\phi}}(u)\,.
$$
Hence,
$$
\left( \left( 1+\frac{\delta }{\lambda }\right) \mathcal{I}+\left( \frac{c}{%
	\lambda }\right) \mathcal{D}\right)^{n} \phi_{k,n}(u)=\left( \left( 1+\frac{\delta }{\lambda }\right) \mathcal{I}+\left( \frac{c}{%
	\lambda }\right) \mathcal{D}\right) \phi_{k,1}(u)=\omega{_{\phi}}(u) \,.
$$
Taking successive derivatives of $\phi_{k,n}(u)$, we find that
\begin{equation*}\label{eq_phidev_i}
{\phi_{k,n}^{(i)}(u)}=\sum _{j=0}^i (-1)^{i+j} \binom{i}{j}\left(\frac{\lambda +\delta}{c}\right)^{i-j}\left(\frac{\lambda}{c}\right)^j \phi_{k,n-j}(u)
\end{equation*}
for $i=0,1,\ldots, n-1$, we obtain the boundary conditions. We note that we have $\phi_{k,n}(0)=0$ as ruin is certain, \textit{almost surely}.
\hfill
\end{proof}

%
	{We can obtain an expression for $\phi_k(u)$ via Laplace transforms by using the method presented in \cite[end of Section~3]{afonso2013dividend}. First we define $\tilde{\phi}_k(z):=\phi_k(b-z)=\phi_k(u)$, replacing $u$ by $z=b-u$. This change of variable is equivalent to switch from the dual model to the primal one. Afterwards, we extend the domain of the latter defined function from $[0,b]$ to $[0,\infty [$ and denote the L.T.~of the resulting function by $\widehat{\phi}_k(s)$, whose expression is stated in the next theorem. Then, if possible, we invert the L.T.~and, at last, we revert the change of variable.}
	\begin{theorem}
		%
		In the Erlang($n,\lambda$) dual risk model, the Laplace Transform 	$\widehat{\phi}_k(s)$ 
{is given by
%
%
%
\begin{equation}\label{eq_phik_hat}
\widehat{\phi}_k(s) = \frac{ \sum_{i=1}^{n} \binom{n}{i} (\lambda+\delta)^{n-i} (-c)^i \sum_{j=0}^{i-1} s^{i-1-j} {\tilde{\phi}}_k^{(j)}(0) + \lambda^n p_k \frac{1-\widehat{p}_k(s)}{s} }
{(\lambda + \delta - c s)^n - \lambda^n \widehat{p}(s)}\,,
\end{equation}
}
where $\hat{p}_k(s)$ is the L.T.~of the $k$-th equilibrium density relative to $p(.)$. 
	\end{theorem}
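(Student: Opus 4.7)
The plan is to take Laplace transforms of the integro-differential equation in Theorem~\ref{thm_intdif1} after performing the change of variable $z=b-u$ suggested in the paragraph just before the statement. First I would rescale the IDE by multiplying through by $\lambda^n$, so that the left-hand operator reads $\bigl((\lambda+\delta)\mathcal{I}+c\mathcal{D}_u\bigr)^n$ and the right-hand side becomes $\lambda^n\omega_\phi(u)$. Setting $\tilde{\phi}_k(z):=\phi_k(b-z)$ sends $\mathcal{D}_u$ to $-\mathcal{D}_z$ and rewrites $\omega_\phi$, via the substitution $y\mapsto y$, as
$$\tilde{\omega}(z)=\int_0^{z}\tilde{\phi}_k(z-y)p(y)\,dy+\int_{z}^{\infty}(y-z)^{k}p(y)\,dy.$$
Following the prescription in \cite{afonso2013dividend}, I would then extend $\tilde{\phi}_k$ from $[0,b]$ to $[0,\infty[$ so that the first piece above becomes a genuine convolution on the whole half-line, and the transformed IDE $\bigl((\lambda+\delta)\mathcal{I}-c\mathcal{D}_z\bigr)^{n}\tilde{\phi}_k(z)=\lambda^n\tilde{\omega}(z)$ holds for every $z\ge 0$.

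Second, I would apply the Laplace transform to both sides of the transformed IDE. Expanding the operator on the left by the binomial theorem and using the standard identity $\mathcal{L}\{f^{(i)}\}(s)=s^{i}\hat{f}(s)-\sum_{j=0}^{i-1}s^{i-1-j}f^{(j)}(0)$ collapses the $i=0$ term into $(\lambda+\delta)^n\widehat{\phi}_k(s)$ and reassembles the $s^i\widehat{\phi}_k(s)$ contributions into the clean polynomial $((\lambda+\delta)-cs)^n\widehat{\phi}_k(s)$, with the boundary contributions producing exactly the double sum $\sum_{i=1}^{n}\binom{n}{i}(\lambda+\delta)^{n-i}(-c)^{i}\sum_{j=0}^{i-1}s^{i-1-j}\tilde{\phi}_k^{(j)}(0)$ that appears in \eqref{eq_phik_hat}. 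On the right-hand side, the convolution piece contributes $\lambda^n\widehat{p}(s)\widehat{\phi}_k(s)$, which I would collect with $\widehat{\phi}_k(s)$ on the left to form the denominator $(\lambda+\delta-cs)^n-\lambda^n\widehat{p}(s)$.

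The one genuinely delicate step is the Laplace transform of the remaining tail term $h_k(z):=\int_z^{\infty}(y-z)^{k}p(y)\,dy$. Here I would use $h_k(0)=p_k$ together with $h_k'(z)=-k\,h_{k-1}(z)$ (valid for $k\ge1$, the boundary contribution at $y=z$ being zero), and the base identity $\mathcal{L}\{\bar{P}\}(s)=(1-\hat{p}(s))/s$, to induct on $k$ and reach $\widehat{h}_k(s)=p_k(1-\widehat{p}_k(s))/s$, where $\widehat{p}_k$ is the Laplace transform of the $k$-th equilibrium density of $p$. This is the identification that lets the tail term enter \eqref{eq_phik_hat} in its stated compact form; I expect it to be the main obstacle because the recursive passage from $\widehat{h}_0$ up to $\widehat{h}_k$ and its matching with the definition of the iterated equilibrium densities is the only step that is not purely formal Laplace-transform bookkeeping.

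Combining the three ingredients and solving algebraically for $\widehat{\phi}_k(s)$ then yields \eqref{eq_phik_hat} exactly. The $n$ constants $\tilde{\phi}_k^{(j)}(0)$, $j=0,\dots,n-1$, remain as unknowns in the numerator and are not needed to state the theorem; they would in a subsequent step be determined from the boundary conditions \eqref{eq_Wubc} (which translate into $\tilde{\phi}_k^{(i)}(b)=0$) by exploiting the zeros of the denominator $(\lambda+\delta-cs)^n-\lambda^n\widehat{p}(s)$, i.e.\ the roots of the generalized Lundberg equation \eqref{eq_Lund}, in the spirit of Theorem~\ref{Theor_psiu}.
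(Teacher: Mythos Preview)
Your proposal is correct and follows essentially the same route as the paper: change variables $z=b-u$, apply the Laplace transform to the binomially expanded operator, collect the $\widehat{\phi}_k(s)$ terms into $(\lambda+\delta-cs)^n-\lambda^n\widehat{p}(s)$, and identify the Laplace transform of the tail integral with $p_k(1-\widehat{p}_k(s))/s$. The only difference is that the paper simply quotes the tail-term identity from \cite{afonso2013dividend}, whereas you outline an inductive proof via $h_k'(z)=-k\,h_{k-1}(z)$; this works (using the standard fact that the mean of the $(k-1)$-th equilibrium distribution is $p_k/(k\,p_{k-1})$) and is a welcome addition rather than a departure.
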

%
\begin{proof} 
With the change of variable, $z=b-u$,  the IDE {\eqref{eq_ide_phik}} becomes
	\begin{equation*}
\left( {\left( 1 + \frac{\delta }{\lambda }\right)} \mathcal{I}-\left( \frac{c}{%
	\lambda }\right) \mathcal{D}_z\right) ^{n} {\tilde{\phi}}_k(z) =\omega\rui{_{\tilde{\phi}}}(z)
\end{equation*}%
with
\begin{equation*} \label{eq_Wu_LT}
\omega{_{\tilde{\phi}}(z)} = 
\int_{0}^{z} {\tilde{\phi}}_k(z-y)p(y)dy + \int_{z}^{\infty
} (y-z)^kp(y)dy\text{,} 
\end{equation*}
{and} boundary conditions 
\begin{equation*}
\label{eq_Wubc_LT}
\quad \displaystyle\left. {\tilde{\phi}}_k^{(i)}
(z)\right\vert _{z=b}=0\,,\;\;i=0,1,,\ldots ,n-1\text{.}  
\end{equation*}

Rewriting the new IDE, we get
$$
\sum_{i=0}^{n} \binom{n}{i} \left(1+\frac{\delta}{\lambda}\right)^{n-i}\left(-\frac{c}{\lambda}\right)^{i} \tilde{\phi}_k^{(i)}(z)=\omega {_{\tilde{\phi}}(z)}\,.
$$
Taking Laplace transforms to both sides, we get the result, noting that
$$ \int_{0}^{\infty}e^{-sz} {\tilde{\phi}}_k^{(i)}(z)dz=s^i\widehat{\phi}_k(s) - \sum_{j=0}^{i-1} s^{i-1-j}  {\tilde{\phi}}_k^{(j)}(0)\,, $$
and that
$$\int_{0}^{\infty} e^{-sz}  \int_{z}^{\infty
} (y-z)^kp(y)dy dz= p_k \frac{1-\widehat{p}_k(s)}{s} \,,
$$
{see \cite[end of Section~3]{afonso2013dividend} and references therein, for both the formula and the details on equilibrium distributions.}
\hfill
\end{proof} 	

\begin{remark} In Formula \eqref{eq_phik_hat} we note that
	\begin{enumerate}
		\item For $k=0$ we have that $p_0=1$ and  $\widehat{p}_0(s)=\widehat{p}(s)$.
		\item For $n=1$ we got the result already stated in \cite[Formula (3.6)]{afonso2013dividend}.
		\item To invert the L.T.~we need to find the zeros of the denominator, they correspond to the roots of the generalized Lundberg's equation in \eqref{eq_Lund}. In the process, we need to use boundary conditions~\eqref{eq_Wubc} to find {$\tilde{\phi}_k^{(j)}(0)=(-1)^j {\phi}_k^{(j)}(b^-)$.}
	\end{enumerate}
\end{remark}

\section{On the amount and the probability of a single dividend}
\label{s:single_div}
\subsection{Introduction}
In this section we present two methods to calculate the distribution of a dividend amount,  $G(u,b;x)$ as defined in \eqref{eq:Gubx}. From there we can calculate straightforwardly the probability of reaching an upper barrier before ruin.
As before, we particularize, exemplifying the case when the gain inter-arrival times follow an Erlang($n,\lambda$) distribution. 

 For a general renewal risk model, if no gain arrives before $t_{0}$ necessarily {the} event $\{\tau_{u} < T_{u}\}$ is impossible,  and so 
$G(u,b;x)=0$.
If the first gain arrives before $t_{0}$, either it does or does not cross $b$, then we derive a defective renewal equation, for $0\leq u\leq b$ and $x\geq0$, 
\begin{eqnarray*}
	G(u,b;x) &=& \int_{0}^{t_{0}}k(t)\left( {\int_{0}^{b-(u-ct)}G(u-ct+y,b;x)p(y)dy}\right. 
+ \left. {\int_{b-(u-ct)}^{b+x-(u-ct)}p(y)dy} \right)dt \, ,
\end{eqnarray*}
where the first inner integral represents the probability of having a first gain at a fixed time $t$, with the corresponding amount not crossing the dividend level however   happening in the future. The second integral represents the probability of a dividend at the instant of the first gain $t$.
 
 With the change of variable $s=u-ct$ the defective renewal equation above becomes
\begin{equation}\label{eq:Gubx2}
G(u,b;x) = \frac{1}{c}\int_{0}^{u}k\left(\frac{u-s}{c}\right)\omega_{G}(s,b;x)ds,\quad 0\leq u\leq b\,, 
%
\end{equation}
{where}
\begin{eqnarray}
	\omega_{G}(s,b;x) & = & \int_{0}^{b-s}G(s+y,b;x)p(y)dy + \int_{b-s}^{b+x-s}p(y)dy \nonumber \\
	&=& \int_{s}^{b}G(y,b;x)p(y-s)dy + \int_{b}^{b+x}p(y-s)dy\,. \label{eq:W_Gsbx} 
\end{eqnarray}

Considering the Erlang$(n,\lambda)$ dual risk model an IDE for $G(u,b;x)$ follows in the next theorem.  
\begin{theorem}\label{th_G(u,b;x)_ide}
	In the Erlang$(n,\lambda)$ dual risk model, $G(u,b;x)$ satisfies the integro--differential equation 
\begin{equation}\label{eq:G(u,b;x)_ide}
\left(\mathcal{I}+\left(\frac{c}{\lambda}\right)\mathcal{D}_u\right)^{n}G(u,b;x) = \omega_{G}(u,b;x),\quad 0<u<b\,,
\end{equation}
where $\omega_{G}(u,b;x)$ is given by \eqref{eq:W_Gsbx},  {with} boundary conditions
\begin{equation}\label{chap5:eq3}
\mathcal{D}^{i}_uG(u,b;x)\left.\right|_{u=0}=0, \quad  i=0,1,\dots,n-1\,.
\end{equation}
%
%
\end{theorem}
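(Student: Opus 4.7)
The plan is to mirror the proof of Theorem~\ref{thm_intdif1} with $\delta = 0$ and the kernel $\omega_\phi$ replaced by $\omega_G$. The starting point is the integral representation \eqref{eq:Gubx2}, which has exactly the same convolution structure that made the previous argument work. I would proceed by induction on the shape parameter $n$ of the Erlang inter-arrival density, using the recursion $k_n'(t) = \lambda(k_{n-1}(t) - k_n(t))$ and the vanishing derivatives at the origin from \eqref{eq:k-derivatives}.

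For the base case $n = 1$, $k_1(t) = \lambda e^{-\lambda t}$ and direct differentiation of
\[
G_1(u,b;x) = \frac{1}{c}\int_0^u k_1\!\left(\tfrac{u-s}{c}\right)\omega_G(s,b;x)\,ds
\]
gives $G_1'(u,b;x) = (\lambda/c)\,\omega_G(u,b;x) - (\lambda/c)\,G_1(u,b;x)$, i.e.\ $(\mathcal{I} + (c/\lambda)\mathcal{D})G_1 = \omega_G$; here the boundary term produced by the upper limit is exactly what yields the inhomogeneous $\omega_G$. For $n \geq 2$, let $G_n$ denote the corresponding quantity under Erlang$(n,\lambda)$ inter-arrivals. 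Differentiating \eqref{eq:Gubx2}, the Leibniz boundary term $c^{-1}k_n(0)\omega_G(u,b;x)$ vanishes by \eqref{eq:k-derivatives}, and substituting $k_n'(t) = \lambda(k_{n-1}(t)-k_n(t))$ under the integral sign produces $G_n'(u,b;x) = (\lambda/c)(G_{n-1}(u,b;x) - G_n(u,b;x))$, equivalently $(\mathcal{I} + (c/\lambda)\mathcal{D})G_n = G_{n-1}$. Iterating this reduction $n-1$ times and composing once more with the base-case identity yields the operator equation \eqref{eq:G(u,b;x)_ide}.

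For the boundary conditions, evaluating \eqref{eq:Gubx2} at $u=0$ immediately gives $G_j(0,b;x) = 0$ for every $j \geq 1$, since the range of integration collapses. Using the recursion $G_n' = (\lambda/c)(G_{n-1} - G_n)$ and induction on $i$, one obtains an explicit expansion of $G_n^{(i)}(u,b;x)$ as a finite linear combination of $G_{n-j}(u,b;x)$ for $j = 0,1,\ldots,i$, with coefficients depending only on $\lambda$ and $c$ (of the same shape as the formula displayed in the proof of Theorem~\ref{thm_intdif1}, with $\delta = 0$). As long as $i \leq n-1$ every index $n-j$ satisfies $n-j \geq 1$, so evaluation at $u = 0$ annihilates each term and \eqref{chap5:eq3} follows. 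The only delicate point throughout the argument is the careful bookkeeping of the Leibniz boundary term coming from the upper limit of the integral: it vanishes for every $n \geq 2$, but is nonzero at the final application where $G_1$ is hit by $(\mathcal{I} + (c/\lambda)\mathcal{D})$, and that is precisely what supplies the right-hand side $\omega_G$. Beyond this observation the argument is essentially a transcription of the one used for $\phi_k(u)$ in Theorem~\ref{thm_intdif1}.
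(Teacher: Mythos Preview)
Your proposal is correct and follows exactly the route the paper intends: its proof consists of the single sentence ``Take successive derivatives with respect to $u$ or by induction similarly to that of Theorem~\ref{thm_intdif1},'' and you have written out precisely that induction (with $\delta=0$, since no discounting appears in \eqref{eq:Gubx2}). The one point worth tightening is notational: your $G_{n-1}$ in the recursion $(\mathcal{I}+(c/\lambda)\mathcal{D})G_n=G_{n-1}$ is the auxiliary integral $\tfrac{1}{c}\int_0^u k_{n-1}((u-s)/c)\,\omega_G(s,b;x)\,ds$ with the \emph{same} $\omega_G$ (the one built from the true $G=G_n$), not the distribution function under genuine Erlang$(n-1)$ inter-arrivals; the paper's proof of Theorem~\ref{thm_intdif1} has the same overloading, so this is only a clarification, not a gap.
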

\begin{proof}
	Take successive derivatives with respect to $u$ or by induction similarly to that of Theorem~\ref{thm_intdif1}.
	\hfill
\end{proof}

To calculate a solution for $G(u,b;x)$ from equations above is not straightforward. We can do it in two ways: Either using the annihilator method used by \cite[Section~5.2]{rodriguez2015some} or the Laplace transform method like is used by \cite{afonso2013dividend}.  We work both in the following subsections.
\subsection{Annihilator method}
The annihilator is a polynomial operator when applied to a given density $p(x-u)$ annihilates function $\omega_{G}(u,b;x)$ in \eqref{eq:G(u,b;x)_ide}, see \cite[Section~5.2]{rodriguez2015some}. Their method found a solution considering a $p(x)$ within the Phase-Type distribution family, briefly $PH(m)$, for a similar function but for a different problem (in the case expected discounted dividends in the dual model). We can bring their method into our problem, and the same distribution family for which we have $n+m$ roots of Lundberg's fundamental equation. However, there are some specific aspects in our problem that are worth to be developed. Applying the method we reach the solution given in the next theorem.  
\begin{theorem}\label{th_G(u,b;x)_cond}
	In the Erlang$(n,\lambda)$ dual risk model, 	$G(u,b;x)$ satisfies, for $0<u\leq b$,
	\begin{equation}\label{eq:Gubx_sum}
	G(u,b;x)=\sum_{i=0}^{n+m-1}a_{i}e^{-r_{i}u},
	\end{equation}
	where, for each $i=0,\dots, n+ m-1$, $r_{i}$   is  a root of Lundberg's {fundamental} equation and the coefficient $a_{i}$, function of $b$ and $x$,  is found using boundary conditions \eqref{chap5:eq3} together with the additional constraint
	\begin{equation}\label{chap5:eq3_add}
	\sum_{i=0}^{n+m-1}\left[a_{i}\int_{b-u}^{\infty}e^{-r_{i}y}p(y)dy\right]e^{-r_{i}u}-P(b+x-u) + P(b-u) = 0\, , \; \forall\, 0<u\leq b.
	\end{equation}
\end{theorem}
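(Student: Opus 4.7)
The plan is to apply the annihilator method of \cite[Section~5.2]{rodriguez2015some} directly to the integro--differential equation \eqref{eq:G(u,b;x)_ide}, following the same general scheme used there for the expected-dividends problem but paying attention to the specific inhomogeneity $\omega_G(u,b;x)$ that appears here.

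First I would write the Phase--Type$(m)$ density $p$ in its exponential-polynomial form, so that $p(y-u)$ regarded as a function of $u$ is annihilated by a polynomial differential operator $L_p(\mathcal{D}_u)$ of order $m$ whose roots are the eigenvalues of the PH generator. Applying $L_p(\mathcal{D}_u)$ to both sides of \eqref{eq:G(u,b;x)_ide} kills the right-hand side modulo boundary contributions from Leibniz differentiation of the $u$-dependent integration limits: the term $\int_u^b G(y,b;x)p(y-u)\,dy$ and the term $\int_b^{b+x}p(y-u)\,dy$ both collapse because $L_p$ annihilates $p(y-u)$ in $u$, and the Leibniz residues can be absorbed into polynomial combinations of derivatives of $G$ of order strictly less than $n+m$. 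The composed operator $L_p(\mathcal{D}_u)\circ(\mathcal{I}+\tfrac{c}{\lambda}\mathcal{D}_u)^n$ therefore defines a homogeneous linear ODE in $u$ of order $n+m$, whose characteristic polynomial, once rewritten in the form $\hat k(-cs)\hat p(s)=1$ with $\delta=0$, is exactly the fundamental Lundberg equation in \eqref{eq_Lund}.

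Next, standard ODE theory gives $G(u,b;x)=\sum_{i=0}^{n+m-1}a_i e^{-r_i u}$ with the $r_i$ the $n+m$ roots of Lundberg's fundamental equation (assumed simple; the repeated-root case is handled by the usual $u^j e^{-r_i u}$ modifications as in \cite{bergel2014sparre,bergel2015further}). To identify the $n+m$ coefficients $a_i=a_i(b,x)$ I would combine the $n$ boundary conditions at $u=0$ from \eqref{chap5:eq3} with $m$ extra equations obtained by substituting the exponential sum back into the original un-annihilated equation \eqref{eq:Gubx2}--\eqref{eq:W_Gsbx}. Because $p$ is Phase-Type, each integral $\int_u^b e^{-r_i y}p(y-u)\,dy$ and $\int_b^{b+x}p(y-u)\,dy$ has a closed form, and the compatibility requirement between the two sides of the IDE collapses, after a standard change of variable and bookkeeping of exponential coefficients, into the single functional identity \eqref{chap5:eq3_add}.

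The main obstacle I expect is precisely this last bookkeeping: showing that \eqref{chap5:eq3_add}, which must hold for every $0<u\le b$, splits into exactly $m$ linearly independent equations in the unknowns $a_i$ (one per exponential rate of the PH generator), so that together with the $n$ conditions from \eqref{chap5:eq3} the linear system determining $(a_0,\dots,a_{n+m-1})$ is square and non-singular. A secondary, more routine difficulty is the careful Leibniz differentiation in the application of $L_p$, which must be tracked to confirm that the boundary residues do not reintroduce inhomogeneous terms that would spoil the order count of the characteristic polynomial.
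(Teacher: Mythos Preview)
Your approach is the paper's --- the annihilator method of \cite{rodriguez2015some} applied to \eqref{eq:G(u,b;x)_ide} --- with one execution difference worth noting. You plan to identify the $r_i$ via the characteristic polynomial of the annihilated homogeneous ODE (composed operator together with the Leibniz residues) and then derive \eqref{chap5:eq3_add} by a separate substitution into the renewal equation \eqref{eq:Gubx2}. The paper instead substitutes the exponential ansatz $\sum_i a_i e^{-r_i u}$ directly back into the \emph{un-annihilated} IDE \eqref{eq:G(u,b;x)_ide}: the left side becomes $\sum_i a_i(1-cr_i/\lambda)^n e^{-r_i u}$, the right side becomes $\sum_i a_i e^{-r_i u}\int_0^{b-u}e^{-r_i y}p(y)\,dy + P(b+x-u)-P(b-u)$, and matching forces $(1-cr_i/\lambda)^n=\hat p(r_i)$ (i.e.\ the $r_i$ are Lundberg roots) and --- since $\hat p(r_i)-\int_0^{b-u}e^{-r_i y}p(y)\,dy=\int_{b-u}^\infty e^{-r_i y}p(y)\,dy$ --- the residual constraint \eqref{chap5:eq3_add}, all in one substitution. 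This sidesteps precisely the Leibniz-residue bookkeeping you flag as your secondary obstacle: you never need to verify directly that the characteristic polynomial of the annihilated ODE coincides with Lundberg. Your primary concern, that \eqref{chap5:eq3_add} splits into exactly $m$ independent linear conditions on the $a_i$, is handled in the paper's subsequent Remark by applying $\mathcal{D}_u^k$ for $k=0,\dots,m-1$ to \eqref{chap5:eq3_add} and evaluating at $u=b^-$.
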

\begin{proof}
	 Similar to the method used by~\cite[Section 5.2]{rodriguez2015some}, consider a polynomial operator, an annihilator of degree $m$ for $p(y-u)$ in $\mathcal{D}=\frac{d}{du}$, $A(\mathcal{D})=\sum_{j=0}^{m} b_j\frac{d^j}{du^j}$,  where $b_j$, $j=0,1,\dots,m$, are constants.   $A(\mathcal{D})$ is such that
\begin{equation} \label{eq_annil_py-u}
A(\mathcal{D})p(y-u)=0.
\end{equation}
Then from \eqref{eq:G(u,b;x)_ide} we apply the operator
\begin{equation}\label{eq_annilG(u,b;x)_ide}
A(\mathcal{D})\left(\mathcal{I}+\left(\frac{c}{\lambda}\right)\mathcal{D}\right)^{n}G(u,b;x)=A(\mathcal{D})\omega_{G}(u,b;x)\,,
\end{equation}
in order to find an homogeneous integro-differential equation of some degree $m+n$, whose solutions can be written as combinations of exponential functions
\begin{equation}\label{eq:Gubx_sum2}
G(u,b;x)=\sum_{i=0}^{n+m-1}a_{i}e^{-r_{i}u},
\end{equation}
for some constants $r_{i}$ and some coefficients $a_{i}$, $i=0,1,\dots,n+m-1$, functions of $b$ and $x$, all independent from $u$.

Taking into account~\eqref{eq_annil_py-u}, the application of the annihilator  to $\omega_G(.)$ leads to
\begin{equation*}
A(\mathcal{D})\omega_{G}(u,b;x)= A(\mathcal{D})\int_{u}^{b}G(y,b;x)p(y-u)dy\,.
\end{equation*}
 We have that the $j$-th derivative with respect to $u$ of the above integral comes, for $j=1,2,\dots$,
 \begin{equation*}
 \mathcal{D}^j\int_{u}^{b}G(y,b;x)p(y-u)dy=\sum_{k=0}^{j-1}(-1)^{k+1}\mathcal{D}^{j-1-k}G(u,b;x)\,p^{(k)}(0) + \int_{u}^{b}G(y,b;x)\, \mathcal{D}^j p(y-u)dy\,,
 \end{equation*}
 %
 so that 
 \begin{eqnarray*}
 A(\mathcal{D})\int_{u}^{b}G(y,b;x)p(y-u)dy &= & \sum_{j=0}^{m} b_j \left[\sum_{k=0}^{j-1}(-1)^{k+1}\mathcal{D}^{j-1-k} G(u,b;x) \,p^{(k)}(0)\right] \\
 &=& \sum_{k=0}^{m-1}  \left[ \sum_{j=k+1}^{m} b_j(-1)^{j-k} \,p^{(j-k-1)}(0) \right]
 \mathcal{D}^{k} G(u,b;x)
  \,.
 \end{eqnarray*}
   since  $\int_{u}^{b}G(y,b;x)\, A(\mathcal{D})\, p(y-s)dy=0$. If we write the inner summation above as $\tilde{b}_k$, we can write \eqref{eq_annilG(u,b;x)_ide} as
  \begin{equation*}
  A(\mathcal{D})\left(\mathcal{I}+\left(\frac{c}{\lambda}\right)\mathcal{D}\right)^{n}G(u,b;x)
  -  \sum_{k=0}^{m-1}  \tilde{b}_k \mathcal{D}^{k} G(u,b;x)= 0\,,
  \end{equation*} 
   which is an homogeneous integro-differential equation of degree $m+n$ with solutions of the form~\eqref{eq:Gubx_sum2}, as required.
   
To find {the} constants and {the} coefficients  we replace {\eqref{eq:Gubx_sum2}} in \eqref{eq:G(u,b;x)_ide} and arrive to the conclusion that the constants $r_i$'s must be the roots of  Lundberg's fundamental equation. Then, the coefficients $a_i$, $i=0,1, \dots,n+m-1$, can be found using the boundary conditions \eqref{chap5:eq3} together with additional constraint \eqref{chap5:eq3_add}. This can be shown as follows.

We have, with $a={\lambda}/{c}$, 
\begin{eqnarray*}
\left(\mathcal{I}+\left(\frac{c}{\lambda}\right)\mathcal{D}\right)^{n}G(u,b;x)&=& 
\sum_{k=0}^{n} \binom{n}{k}a^{-k}\mathcal{D}^k_u
\left(\sum_{i=0}^{n+m-1}a_{i}e^{-r_{i}u}\right) \\
& = &
\sum_{i=0}^{n+m-1}a_{i}e^{-r_{i}u} \sum_{k=0}^{n} \binom{n}{k} {\left(-a^{-1} r_i\right)^{k}}
\\
&=& \sum_{i=0}^{n+m-1}a_{i}e^{-r_{i}u} \left(1-r_i/a\right)^n= 
\sum_{i=0}^{n+m-1}a_{i}e^{-r_{i}u}\hat{p}(r_i)
\,,
\end{eqnarray*}
using Lundberg's fundamental equation.

Replacing {\eqref{eq:Gubx_sum2}} in the right-hand side of \eqref{eq:G(u,b;x)_ide} we get 
\begin{equation*}
\omega_{G}(u,b;x)=\sum_{i=0}^{n+m-1} \, a_{i}\, e^{-r_{i}u}\int_{0}^{b-u}e^{-r_{i}y}p(y)dy+P(b+x-u) - P(b-u) \, .
\end{equation*}
Equating the two sides we get
\begin{equation*}
\sum_{i=0}^{n+m-1} \, a_{i}\, e^{-r_{i}u}\left(\hat{p}(r_i)-\int_{0}^{b-u}e^{-r_{i}y}p(y)dy\right) = P(b+x-u) - P(b-u) \,,
\end{equation*}
and  condition \eqref{chap5:eq3_add} follows. 
\hfill
\end{proof}
\begin{remark}
	The extra $m$ boundary conditions can be found from constraint \eqref{chap5:eq3_add} as follows.
$$A(\mathcal{D})=\sum_{j=0}^{m} b_j\frac{d}{du}$$
 is the annihilator of $p(y-u)$. Then, $A(\mathcal{D})\omega_{G}(u,b;x)=0$, in particular 
 $$
 A(\mathcal{D}) \left[P(b+x-u) + P(b-u)\right]=0\,.
 $$
 From condition \eqref{chap5:eq3_add}
$$
\sum_{i=0}^{n+m-1}\left[a_{i}\int_{b-u}^{\infty}e^{-r_{i}y}p(y)dy\right]e^{-r_{i}u}=P(b+x-u) - P(b-u)\, , \; \forall\, 0<u\leq b\,,
$$
we have, for $ 0<u< b$
$$
A(\mathcal{D})\sum_{i=0}^{n+m-1}\left[a_{i}\int_{b-u}^{\infty}e^{-r_{i}y}p(y)dy\right]e^{-r_{i}u}= A(\mathcal{D})\left[P(b+x-u) - P(b-u)\right]=0\,.
$$
Hence, we can write, for $k=0,1,\dots,m-1$, and $0<u < b$.
$$
\mathcal{D}^k_u
\left(
\sum_{i=0}^{n+m-1}\left[a_{i}\int_{b-u}^{\infty}e^{-r_{i}y}p(y)dy\right]e^{-r_{i}u}
\right)= \mathcal{D}^k_u \left[P(b+x-u) - P(b-u)\right]\, .
$$

Since conditions are valid for all $0<u<b$,
we can set $u=b^-$ and calculate
$$
\mathcal{D}^k_u
\left.
\sum_{i=0}^{n+m-1}
\left[ a_{i}\int_{b-u}^{\infty}e^{-r_{i}y}p(y)dy
\right]
e^{-r_{i}u}
\right|_{u=b^-}= 
\mathcal{D}^k_u \left.\left[P(b+x-u) - P(b-u)\right]\right|_{u=b^-}\, 
$$
simplifying significantly the calculations for finding the $a_i$'s coefficients.
\end{remark}
\begin{remark}
	From \eqref{eq:W_Gsbx} and Theorem~\ref{th_G(u,b;x)_ide} we write that 
	\begin{equation*}
	\omega_G(b,b;x)=P(x)
	\end{equation*}
	and that 
	\begin{eqnarray*}
		\lim_{u\uparrow b} \left(\mathcal{I}+{a^{-1}}\, \mathcal{D}_u\right)^{n}G(u,b;x) &=&P(x)\\
		{\lim_{u\uparrow b} }\sum_{k=0}^{n} \binom{n}{k}a^{-k}\mathcal{D}^k_u \, G(u,b;x)&=& P(x)\,.
	\end{eqnarray*}
	Using \eqref{chap5:eq3_add}, above equation is equivalent to 
	\begin{eqnarray*}
		\sum_{i=0}^{n+m-1} a_{i}\hat{p}(r_i)\,e^{-r_{i}b} &=&P(x) \\
		\sum_{i=0}^{n+m-1}a_{i}\left(1-r_i/a \right)^n\, e^{-r_{i}b} &=&P(x) \,,
	\end{eqnarray*}
	since $\hat{p}(r_i)= \hat{k}(-c\,r_i)^{-1}$ from fundamental Lundberg's Equation~\eqref{eq_Lund}, with $\delta=0^+$.  
\end{remark}
To get a final formula for $G(u,b;x)$ we must  find the $n+m$ roots  of Lundberg's equation, either with positive, or negative real parts, or even with existing null roots (there is at least one). See remark that follows.
\begin{remark}
	Referring to Theorems~\ref{th_G(u,b;x)_ide} and \ref{th_G(u,b;x)_cond}, in any of the developments therein the usual income {condition~\eqref{eq:netprofit}} is imposed. Indeed, results remain valid if the income condition is reversed. In any case looking at the fundamental Lundberg's equation, we know that there is always a null solution. Note that according to \cite[Theorem~2 and Remark~1]{li+garrido04a} and \cite[Section~3]{rodriguez2015some} we conclude that, clearly, 
	\begin{enumerate}
		\item In the case of a negative loading condition, $c\, E(W) < E(X)\,$ we have exactly $n$ roots with positive real parts, $m-1$ with negative real parts and a null root;
		\item In the case of a positive loading condition we have exactly $n-1$ roots with positive real parts, $m$ with negative real parts and a null root;
		\item If $cE(W) = E(X)\,$ the sequence will be $n-1$, $m-1$ and a double null root.  
	\end{enumerate} 
Because we'll need to take derivatives, we need to take care that in either formulae \eqref{eq:Gubx_sum} or \eqref{chap5:eq3_add}, we have one constant $a_i$ that is independent of the corresponding exponential factor $e^{-r_i\,u}$ (it is two in {the} third case above).
\end{remark}

\subsection{Laplace transform method}
Here, we can follow the approach presented by \cite{afonso2013dividend} and get the {Laplace} transform that follows in the next theorem. First, define 
\begin{equation}\label{chap5:eq5}
\tilde{G}(z,b;x):=G(b-z,b;x)=G(u,b;x)\,,
\end{equation}
with $z=b-u \Leftrightarrow u=b-z$,  and
\begin{equation}
\label{eq:rho_func}
\rho(z,b;x):=\left\{\begin{array}{cc}
\tilde{G}(z,b;x), & 0 < z < b\, . \\
0, & z \geq b, 
\end{array}
\right.
\end{equation}
extending the domain of $\tilde{G}(z,b;x)$, as a function of $z$, to $(0, \infty)$.
\begin{theorem}
	The Laplace transform of $\rho(z,b;x)$is given by 
	\begin{equation}\label{chap5:eq8}
	\widehat{\rho}(s,b;x)=\frac{\sum_{j=1}^{n}\binom{n}{j} \left(-{c}\right)^j{\lambda}^{n-j} \sum_{i=0}^{j-1}\rho^{(j-1-i)}(0,b;x)\, s^{i}+
		{\lambda}^{n} \left[\mathcal{T}_{s}P(x)-\mathcal{T}_{s}P(0)\right]}{\left({\lambda}-{c}s\right)^{n}-{\lambda}^n\widehat{p}(s)},
	\end{equation}
	where $\mathcal{T}_{s}f(x) = \int_{0}^{\infty}e^{-st}f(t+x)dt$ is an integral operator over an integrable function $f$, known in the actuarial literature as the Dickson-Hipp operator, see \cite{dickson2001Onthetime}. 
	%
\end{theorem}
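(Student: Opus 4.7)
The plan is to mirror the approach of Afonso et al.~for the classical $(n=1)$ dual model: I convert the IDE of Theorem~\ref{th_G(u,b;x)_ide} into an equation for $\tilde{G}(z,b;x)$ via the substitution $z=b-u$, extend $\tilde{G}$ to the function $\rho$ defined on $[0,\infty)$ by~\eqref{chap5:eq5}--\eqref{eq:rho_func}, and then apply Laplace transforms termwise. Under $z=b-u$ the operator $\mathcal{D}_u$ becomes $-\mathcal{D}_z$, so $(\mathcal{I}+(c/\lambda)\mathcal{D}_u)^{n}$ turns into $(\mathcal{I}-(c/\lambda)\mathcal{D}_z)^{n}$. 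A direct change of variable rewrites $\int_{u}^{b}G(y,b;x)p(y-u)\,dy$ as the convolution $\int_{0}^{z}\tilde{G}(w,b;x)p(z-w)\,dw$, and $\int_{b}^{b+x}p(y-u)\,dy$ as $P(z+x)-P(z)$. Multiplying by $\lambda^n$ and expanding the binomial operator, the IDE becomes
\begin{equation*}
\sum_{j=0}^{n}\binom{n}{j}(-c)^{j}\lambda^{n-j}\tilde{G}^{(j)}(z,b;x)=\lambda^{n}\int_{0}^{z}\tilde{G}(w,b;x)p(z-w)\,dw+\lambda^{n}\left[P(z+x)-P(z)\right],
\end{equation*}
for $0<z<b$. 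Because $\rho\equiv 0$ on $[b,\infty)$, the convolution extends without change to one of $\rho$ and $p$ over $(0,\infty)$.

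Taking Laplace transforms I use three standard identities: (i) the L.T.~of $\rho^{(j)}$ equals $s^{j}\widehat{\rho}(s,b;x)-\sum_{i=0}^{j-1}s^{j-1-i}\rho^{(i)}(0,b;x)$; (ii) the L.T.~of the convolution equals $\widehat{p}(s)\,\widehat{\rho}(s,b;x)$; (iii) by the definition of the Dickson--Hipp operator, the L.T.~of $P(z+x)-P(z)$ equals $\mathcal{T}_{s}P(x)-\mathcal{T}_{s}P(0)$. The $j=0$ term together with the leading-in-$s$ pieces from the $j\ge 1$ terms collapse through $\sum_{j=0}^{n}\binom{n}{j}(-cs)^{j}\lambda^{n-j}=(\lambda-cs)^{n}$ to give the factor $(\lambda-cs)^{n}-\lambda^{n}\widehat{p}(s)$ multiplying $\widehat{\rho}(s,b;x)$, while the initial-value contributions from integration by parts assemble into the first block in the numerator of~\eqref{chap5:eq8} (after the cosmetic reindexing $i\mapsto j-1-i$). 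Solving for $\widehat{\rho}(s,b;x)$ delivers the stated formula.

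The delicate step is justifying the derivative L.T.~formula for $\rho$, since the extension by zero at $z=b$ could in principle produce right-endpoint boundary contributions. Integrating by parts on $(0,b)$ instead of on $(0,\infty)$, each application of integration by parts at order $j\le n$ produces terms proportional to $\tilde{G}^{(i)}(b^-)$ for $i=0,\dots,j-1$. Under $z=b-u$ one has $\tilde{G}^{(i)}(b^-)=(-1)^{i}\mathcal{D}_u^{i}G(u,b;x)\big|_{u=0}$, and the boundary conditions~\eqref{chap5:eq3} assert that the right-hand side vanishes for $i=0,\dots,n-1$. Consequently every right-endpoint term vanishes across the $n$ integrations by parts required, and the usual derivative L.T.~identity applies without correction. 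This is the key observation that makes the classical $n=1$ argument of Afonso et al.~extend cleanly to arbitrary $n$, the remainder reducing to algebraic bookkeeping.
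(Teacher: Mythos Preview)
Your proof is correct and follows essentially the same route as the paper: perform the change of variable $z=b-u$ to obtain an IDE for $\tilde{G}$ with a convolution kernel, then take Laplace transforms termwise using the standard derivative, convolution, and Dickson--Hipp identities. Your write-up is considerably more detailed than the paper's brief sketch, and your final paragraph (using the boundary conditions~\eqref{chap5:eq3} to kill the endpoint terms at $z=b^-$) makes explicit a justification that the paper leaves implicit.
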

\begin{proof}
Following the same line of reasoning in Theorem 2.4, we have, changing the variable, that the integro-differential equation \eqref{eq:G(u,b;x)_ide} becomes,
\begin{equation*}
\left(\mathcal{I}-\left(\frac{c}{\lambda}\right)\frac{d}{dz}\right)^{n}\tilde{G}(z,b;x) =\int_{0}^{z}\tilde{G}(z-y,b;x)p(y)dy + P(z+x)-P(z) ,\quad 0<z<b,
\end{equation*}
with boundary conditions
\begin{equation*}
\frac{d^{i}}{dz^{i}}\tilde{G}(z,b;x)\left.\right|_{z=b}=0, \quad  i=0,1,\dots, n-1.
\end{equation*}
Applying the L.T.~ to each side of the IDE, it comes the final result because
$$
\int_{0}^{\infty}e^{-sz}P(z+x)dz=:\mathcal{T}_{s}P(x)$$
and
$$ 
\int_{0}^{\infty}e^{-sz}P(z)dz=\mathcal{T}_{s}P(0)=\frac{\hat{p}(s)}{s}\,.
$$
%
%
%
%
%
%
%
\hfill
\end{proof}

Note that for $n=1$ Formula~\eqref{chap5:eq8} corresponds to Result~(6.5) in \cite{afonso2013dividend}.
Similarly to what is remarked concerning to Formula~\eqref{eq_phik_hat}, if we want to obtain $\rho(z,b;x)$, and therefore $G(u,b;x)$, we must specify a distribution $P(x)$ for the  gain amounts. This allows to factor the de\-no\-mi\-na\-tor, using all the roots of the {fundamental} Lundberg's equation, separate the re\-sul\-ting expression into partial fractions and then invert the L.T.~\eqref{chap5:eq8}.

\section{On the number of gains down to ruin}
\label{s:no_ruin}
 In this section we work the probability $q(u,m)$ as the probability of having exactly $m$ gains prior to ruin, given initial surplus $u$. We consider that if no gain arrives, ruin occurs at time $t_0=u/c$. If a gain arrives before time $t_0$, at time $t \in (0,t_0)$,  surplus will be immediately after $U(t)=u-ct+X_1$. If $m$ gains arrive, $m=2,3,\dots$,  the first has to occur before $t_0$ necessarily, say $t$, as well as the following $m-1$, after the renewal of the process at $t$. Then we have for $u>0$
\begin{eqnarray}
	q(u,0) &=& 1-K (u/c) \nonumber\\
	q(u,m) &=& \int_{0}^{u/c}k (t)\int_{0}^{\infty}q(u-ct+x,m-1)p(x)dxdt\,, \text{ \ } m=1,2,\dots\,. \label{eq:qum}
\end{eqnarray}
For $u=0$, the process is ruined immediately after start (\textit{almost surely}), then we can write that $q(0,0)=1$ and $q(0,m)=0$, $m=1,2,\dots$

For the $Erlang(n,\lambda)$ dual risk model we can set the following integro-differential equation, in the theorem below. In the theorem we conveniently use the notation, {for $i=0,1,2,\dots n-1$},
\begin{eqnarray}
q_{n-i}(u,m) &=& \int_{0}^{u/c}k_{n-i}(t)\int_{0}^{\infty}q(u-ct+x,m-1)p(x)dxdt  \nonumber \\[2.0mm]
& =& \frac{1}{c}\int_{0}^{\infty}\int_{0}^{u}k_{n-i}\left(\frac{u-s}{c}\right)q(s+x,m-1)p(x)dsdx
\,,\text{ \ }  m=1,2,\dots \,, \label{eq:qum_ni}
\end{eqnarray} 
so that {$q_{n   }(u,m)=q(u,m)$}.
 \begin{theorem}\label{theo_gain_no}
	In the Erlang$(n,\lambda)$ dual risk model, $q(u,m)$ given by \eqref{eq:qum}  satisfies the integro--differential equation 
\begin{equation}\label{eq:gain_no_ide}
\left(\mathcal{I}+\left(\frac{c}{\lambda}\right)\mathcal{D}_u\right)^{n}q(u,m)=  \omega_{q}(u,m-1)\, \quad n, m \in\mathbb{N} ;  \,, 
\end{equation}
where $\omega_{q}(u,x)$ is given by 
\begin{equation}\label{eq:qum_W}
\omega_{q}(u,x)=\int_{u}^{\infty}q(y,x)p(y-u)dy \,,
\end{equation} 
%
with boundary conditions
\begin{equation}\label{eq:qum_difn}
\mathcal{D}^{i}_u q(u,m))\left.\right|_{u=0}=0, \quad  i=0,1,\dots,n-1\,; m\in \mathbb{N}\,.
\end{equation}
 \end{theorem}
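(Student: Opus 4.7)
The plan is to transcribe, with the obvious bookkeeping adjustments, the inductive argument already carried out for Theorem~\ref{thm_intdif1}. Rewriting \eqref{eq:qum} under the change of variable $s=u-ct$ and indexing by the Erlang shape parameter as in \eqref{eq:qum_ni}, I have
$$q_n(u,m)=\frac{1}{c}\int_{0}^{u}k_n\!\left(\frac{u-s}{c}\right)\omega_q(s,m-1)\,ds,$$
with $\omega_q$ as in \eqref{eq:qum_W}. From here, everything is controlled by differentiating in $u$ and exploiting the Erlang identities \eqref{eq:k-derivatives}.

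The main computation is the derivative recursion. For $n\ge 2$ the endpoint term vanishes because $k_n(0)=0$, and the relation $k_n'=\lambda(k_{n-1}-k_n)$ then yields
$$q_n'(u,m)=\frac{\lambda}{c}\bigl(q_{n-1}(u,m)-q_n(u,m)\bigr),\qquad\text{i.e.}\qquad \left(\mathcal{I}+\frac{c}{\lambda}\mathcal{D}\right)q_n(u,m)=q_{n-1}(u,m).$$
For the base case $n=1$ the endpoint term is $k_1(0)=\lambda$, and the same differentiation produces $(\mathcal{I}+(c/\lambda)\mathcal{D})q_1(u,m)=\omega_q(u,m-1)$. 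Composing telescopically gives
$$\left(\mathcal{I}+\frac{c}{\lambda}\mathcal{D}\right)^{n}q_n(u,m)=\left(\mathcal{I}+\frac{c}{\lambda}\mathcal{D}\right)^{n-1}q_{n-1}(u,m)=\cdots=\left(\mathcal{I}+\frac{c}{\lambda}\mathcal{D}\right)q_1(u,m)=\omega_q(u,m-1),$$
which is exactly \eqref{eq:gain_no_ide}.

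For the boundary conditions \eqref{eq:qum_difn} I would exploit the fact that every $q_j(u,m)$ with $j\ge 1$ and $m\ge 1$ vanishes at $u=0$ (the defining integral runs from $0$ to $u$, or equivalently, ruin is immediate from a zero surplus). Expressing the $i$-th derivative of $q_n$ as a linear combination of $q_{n-j}(u,m)$ for $j=0,\dots,i$---the exact analogue of the formula displayed at the end of the proof of Theorem~\ref{thm_intdif1} specialised to $\delta=0$---then shows $q^{(i)}(0,m)=0$ for $i=0,1,\dots,n-1$. The only subtlety worth flagging is that, unlike the IDE for $\phi_k$, equation \eqref{eq:gain_no_ide} couples level $m$ to level $m-1$ through $\omega_q(u,m-1)$, so the theorem is really a recursion in $m$ rather than a self-contained IDE, seeded by the explicit formula $q(u,0)=1-K(u/c)$. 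This coupling is the only genuinely new feature compared with Theorem~\ref{thm_intdif1}, and it is harmless because $0\le q(\cdot,m-1)\le 1$ justifies all the differentiations under the integral sign uniformly in $m$.
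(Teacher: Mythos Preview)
Your proof is correct and follows essentially the same route as the paper: both rewrite $q(u,m)$ via the substitution $s=u-ct$, differentiate using the Erlang identity $k_n'=\lambda(k_{n-1}-k_n)$ to obtain the one-step reduction $\left(\mathcal{I}+\frac{c}{\lambda}\mathcal{D}\right)q_n=q_{n-1}$, telescope down to $q_1$ where the endpoint contribution produces $\omega_q(u,m-1)$, and derive the boundary conditions from the vanishing of every $q_{n-j}(0,m)$. The only cosmetic difference is that you invoke the explicit formula for $q_n^{(i)}$ as a combination of $q_{n-j}$'s (the $\delta=0$ specialisation of the display at the end of the proof of Theorem~\ref{thm_intdif1}), whereas the paper works with the equivalent binomial expansion of $\left(\mathcal{I}+\frac{c}{\lambda}\mathcal{D}\right)^i q$ and solves the resulting triangular system recursively.
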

\begin{proof}
Using a standard approach, setting $s=u-ct$, for $n=2,3,\dots$ Equation~\eqref{eq:qum} can be written as 
\begin{eqnarray*}
q(u,m) &=& \int_{0}^{\infty}  \int_{0}^{u} k_n\left(\frac{u-s}{c}\right)q(s+x,m-1)ds p(x)dx\,, \text{ \ } m=1,2,\dots \,, 
\end{eqnarray*} 
differentiating with respect to $u$, using properties~\eqref{eq:k-derivatives} and noting \eqref{eq:qum_ni}, we can write 
 \begin{equation*}
 \left(\mathcal{I}+\left(\frac{c}{\lambda}\right)\mathcal{D}_u\right)q(u,m)=  {q}_{n-1}(u,m-1)\,. 
 \end{equation*}
 Recursively, we can get 
  \begin{equation*}
 \left(\mathcal{I}+\left(\frac{c}{\lambda}\right)\mathcal{D}_u\right)^{n-1}q(u,m)=  {q}_{1}(u,m-1) \,,
 \end{equation*}
 noting that $k_1(\cdot)$ is an Exponential($\lambda$) density. Then, 
  \begin{equation*}
 \left(\mathcal{I}+\left(\frac{c}{\lambda}\right)\mathcal{D}_u\right)^{n}q(u,m)=  \left(\mathcal{I}+\left(\frac{c}{\lambda}\right)\mathcal{D}_u\right){q}_{1}(u,m-1)=\omega_q(u,m-1) \,.
 \end{equation*}
 For $n=1$, it follows immediately from the first step, or last above, that  $\left(\mathcal{I}+\left(\frac{c}{\lambda}\right)\mathcal{D}_u\right)q(u,m)=  \omega_q(u,m-1)$.
 
 For the boundary conditions, note that 
 \begin{equation*}
  \left(\mathcal{I}+\left(\frac{c}{\lambda}\right)\mathcal{D}_u\right)^{i}q(u,m)= \sum_{k=0}^{i}
  \binom{i}{k}\left( \frac{c}{\lambda}\right)^k \mathcal{D}_u^k \, {q}_{n-i}(u,m-1) \,,
 \end{equation*}
 then
 \begin{equation*}
\lim_{u\downarrow 0} \left(\mathcal{I}+\left(\frac{c}{\lambda}\right)\mathcal{D}_u\right)^{i}q(u,m)=  \sum_{k=0}^{i} \binom{i}{k}
 \left( \frac{c}{\lambda}\right)^k \left. \mathcal{D}_u^k  \, {q}_{n-i}(u,m-1) \right|_{u=0}=0\,, \quad i=0,1,\dots, n-1.
\end{equation*} 
 Recursively, we get \eqref{eq:qum_difn}.
 \hfill
\end{proof}

We can find  an easy Laplace transform formula for \eqref{eq:qum}, see next theorem.
\begin{theorem}
	In the Erlang$(n,\lambda)$ dual risk model the  Laplace transform for ${q}(u,m)$, denoted as $\hat{q}(s,m)$, is given by 
\begin{eqnarray}
\hat{q}(s,m) &= & \frac{\widehat{\omega}_{q}(s,m-1)}{\left(1+\frac{c}{\lambda } s\right)^n }\,,  \quad m=1,2,\dots \label{eq:qumLT}
\\[1.1mm]
 \hat{q}(s,0) & = & \widehat{\Bar{K}}_{n,a}(s) = \frac{1}{s}\left[ 1- \hat{k}_{n,a}(s)\right] \label{eq:qu0LT}
\,.
\end{eqnarray}
where $\widehat{\omega}_{q}(s,x)$ is the L.T.~ of \eqref{eq:qum_W},  $\Bar{K}_{n,a}(s)=1-{K}_{n,a}$ and ${K}_{n,a}$ are respectively the survival function and corresponding density, where subscript $\{n,a\}$ refers to  updated scale parameter $a=\lambda/c$ of the $Erlang(n,a)$ distribution.
\end{theorem}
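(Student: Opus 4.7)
The plan is to apply the Laplace transform directly to the two characterizations of $q(u,m)$ established earlier, exploiting the vanishing boundary conditions to eliminate initial-value contributions. For the recursion $m\geq 1$, I would begin from the integro-differential equation \eqref{eq:gain_no_ide} of Theorem~\ref{theo_gain_no} and transform both sides. Using the standard identity
$$\mathcal{L}\!\left[\mathcal{D}_u^k q(u,m)\right](s)=s^k\hat{q}(s,m)-\sum_{j=0}^{k-1}s^{k-1-j}\mathcal{D}_u^j q(u,m)\big|_{u=0},$$
the boundary conditions \eqref{eq:qum_difn} annihilate every initial-value term as long as $k\leq n$. Expanding $(\mathcal{I}+(c/\lambda)\mathcal{D}_u)^n$ by the binomial theorem and summing the resulting $s^k$ contributions then gives $\bigl(1+(c/\lambda)s\bigr)^n\hat{q}(s,m)$ on the left-hand side, while the right-hand side transforms to $\widehat{\omega}_q(s,m-1)$ by definition. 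Solving for $\hat{q}(s,m)$ yields \eqref{eq:qumLT}.

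For the base case $m=0$, I would start from the expression $q(u,0)=1-K(u/c)$ recorded in \eqref{eq:qum}. A simple scaling argument identifies $K(u/c)$: if $W\sim\text{Erlang}(n,\lambda)$, then $cW\sim\text{Erlang}(n,\lambda/c)=\text{Erlang}(n,a)$, so $K(u/c)=K_{n,a}(u)$ and hence $q(u,0)=\bar{K}_{n,a}(u)$. Applying the classical L.T.~identity $\widehat{\bar{F}}(s)=(1-\hat{f}(s))/s$ immediately gives \eqref{eq:qu0LT}.

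The main bookkeeping point, and the only one that really requires attention, is to verify that the boundary conditions of Theorem~\ref{theo_gain_no} eliminate \emph{all} initial-value terms in the transform of $\mathcal{D}_u^n q(u,m)$, since these derivatives involve orders up to $n-1$ evaluated at $u=0$; this matches \eqref{eq:qum_difn} exactly. I do not foresee any substantive obstacle: once the integro-differential characterization and its boundary data are in hand, the proof reduces to routine application of standard Laplace transform rules together with the Erlang scaling observation.
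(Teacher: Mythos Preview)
Your proposal is correct and follows essentially the same route as the paper: expand the operator $(\mathcal{I}+(c/\lambda)\mathcal{D}_u)^n$ binomially, apply the Laplace transform termwise, invoke the boundary conditions \eqref{eq:qum_difn} to kill all initial-value contributions, and sum to obtain $(1+(c/\lambda)s)^n\hat{q}(s,m)=\widehat{\omega}_q(s,m-1)$; the base case is handled identically via the Erlang scaling $K_{n,\lambda}(u/c)=K_{n,a}(u)$ and the survival-function transform identity.
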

\begin{proof}
	Develop \eqref{eq:gain_no_ide}, apply the L.T.~ operator and its properties, to get
	\begin{equation*}
\sum_{k=0}^{n}\binom{n}{k} a^{-k} \widehat{\mathcal{D}^{k}_u} q(s,m)= 
\sum_{k=0}^{n}\binom{n}{k} a^{-k} \left[ s^k \, \hat{q}(s,m) - \sum_{j=1}^{k-1} s^j \, q^{(k-1-j)} \,(0,m)\right]\,,
	\end{equation*}
	where $q^{(i)} \,(0,m)=\left. \mathcal{D}^{i}_u q(u,m)\right|_{u=0}$.  Getting use of the boundary conditions \eqref{eq:qum_difn}, we have that  $q^{(i)} \,(0,m)=0$, $i=0,\dots, k-1$, and obtain
	\begin{equation*}
	\hat{q}(s,m)\left(1+a^{-1} s\right)^n=\widehat{\omega}_{q}(s,m-1)\,.
	\end{equation*} 
	
	Formula~\eqref{eq:qu0LT} is immediate since 	$q(u,0) = 1-K_{n,\lambda}(u/c)=1-K_{n,a}(u)$, then using L.T.~ properties.
	\hfill
\end{proof}

In sequence of Theorem~\ref{theo_gain_no}, from Equation \eqref{eq:gain_no_ide} we can develop an homogeneous differential equation in order to provide a solution to that result. This is done in the  theorem that follows.
{
\begin{theorem}\label{th:gain_no_hdifeq}
	In the Erlang$(n,\lambda)$ dual risk model, $q(u,m)$ given by \eqref{eq:qum}  satisfies the homogeneous differential equation, on $q(u,m)$ and $q(u,m-1)$, $m=1,2,\dots$,
	\begin{equation}\label{eq:gain_no_hdifeq}
	\mathcal{A}(\mathcal{D})
	\left(\mathcal{I}+\left(\frac{c}{\lambda}\right)\mathcal{D}_u\right)^{n}q(u,m)
	-
	\sum^{m^\ast-1}_{k=0} \tilde{b}_k \mathcal{D}^k_u q(u,m-1) = 0 \, \quad n, m \in\mathbb{N} ;  \,, 
	\end{equation}
	where $\tilde{b}_k$ is given by 
	\begin{equation*}
	\tilde{b}_k = \sum^{m^\ast}_{j=k+1} b_j(-1)^{j-k} p^{(j-1-k)}(0) \,,
	\end{equation*}
	$\mathcal{A}(\mathcal{D})$ is the annihilator operator of $p(y-u)$, $m^\ast$ the respective polynomial degree,
	\begin{equation*}
	\mathcal{A} (\mathcal{D}) =\sum^{m^\ast}_{j=0} b_j \mathcal{D}^j\,,
	\end{equation*}
	$\mathcal{D}$ is the differential operator %
	and $b_j$, $j=0,1,\dots,k$, some constants.
\end{theorem}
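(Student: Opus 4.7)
The plan is to mirror the annihilator argument already employed in the proof of Theorem~\ref{th_G(u,b;x)_cond}, adapting it to the present integral $\omega_q(u,m-1)$. I would start by applying the operator $\mathcal{A}(\mathcal{D})$ to both sides of the integro--differential equation \eqref{eq:gain_no_ide}. The left-hand side is already in the form displayed in \eqref{eq:gain_no_hdifeq}, so the task reduces to rewriting $\mathcal{A}(\mathcal{D})\,\omega_q(u,m-1)$ as a linear combination of derivatives of $q(u,m-1)$ with the prescribed coefficients $\tilde b_k$.

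The key technical step is the same differentiation--under--the--integral identity used in the proof of Theorem~\ref{th_G(u,b;x)_cond}: for $j\geq 1$,
\[
\mathcal{D}^j\!\int_u^{\infty}\! q(y,m-1)\,p(y-u)\,dy = \sum_{k=0}^{j-1}(-1)^{k+1}\mathcal{D}^{j-1-k}q(u,m-1)\,p^{(k)}(0) + \int_u^{\infty}\! q(y,m-1)\,\mathcal{D}^j p(y-u)\,dy,
\]
which I would establish by induction on $j$, using at each step the fact that $\mathcal{D}_u^j p(y-u)=(-1)^j p^{(j)}(y-u)$ and invoking the usual tail/integrability properties of $p$ and $q(\cdot,m-1)$ to rule out any contribution at the upper limit. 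Summing over $j$ weighted by $b_j$, the integral remainder collapses to $\int_u^{\infty} q(y,m-1)\,\mathcal{A}(\mathcal{D})p(y-u)\,dy=0$ by the defining property of the annihilator.

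What survives is the boundary part $\sum_{j=0}^{m^\ast}b_j\sum_{k=0}^{j-1}(-1)^{k+1}\mathcal{D}^{j-1-k}q(u,m-1)\,p^{(k)}(0)$. Swapping the order of summation, by setting $k':=j-1-k$ so that $k'$ ranges over $\{0,\ldots,m^\ast-1\}$ and $j$ over $\{k'+1,\ldots,m^\ast\}$, this double sum rearranges to $\sum_{k'=0}^{m^\ast-1}\tilde b_{k'}\,\mathcal{D}^{k'}q(u,m-1)$ with $\tilde b_{k'}=\sum_{j=k'+1}^{m^\ast}b_j(-1)^{j-k'}p^{(j-1-k')}(0)$, precisely the coefficients appearing in the statement. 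Moving this term to the left-hand side yields \eqref{eq:gain_no_hdifeq}. The only substantive difference from the proof of Theorem~\ref{th_G(u,b;x)_cond} is that the inner integral here runs to $\infty$ rather than $b$, so I would need the decay of $p$ (automatically available within the phase--type family in which $\mathcal{A}$ lives) to ensure that the integrated-by-parts terms at the upper endpoint vanish; no deeper obstacle is anticipated, the main difficulty being careful bookkeeping of signs and indices in the reshuffle.
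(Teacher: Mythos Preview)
Your proposal is correct and follows essentially the same route as the paper's own proof: apply the annihilator $\mathcal{A}(\mathcal{D})$ to both sides of \eqref{eq:gain_no_ide}, use the differentiation identity for $\mathcal{D}^j\int_u^{\infty}q(y,m-1)p(y-u)\,dy$ (the same one quoted in the proof of Theorem~\ref{th_G(u,b;x)_cond}), note that the residual integral vanishes by the annihilator property, and reshuffle the boundary terms into the $\tilde b_k$ form. The paper's proof is terser, simply stating the outcome of the reshuffle, while you spell out the index swap and the need for decay at the upper endpoint; but the argument is the same.
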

\begin{proof}
From the right-hand side of \eqref{eq:gain_no_ide}, we write
\begin{eqnarray*}
	\mathcal{A} (\mathcal{D}) \omega_{q}(u,m-1) &=&  	\mathcal{A} (\mathcal{D}) \int_{u}^{\infty}q(x,m-1)p(x-u)dx \\ 
	& = &\sum^{m^\ast-1}_{k=0}  \left[ \sum^{m^\ast}_{j=k+1} b_j(-1)^{j-k} p^{(j-1-k)}(0) \right] \mathcal{D}^k_u \, {q}(u,m-1)  \\
	&= & \sum^{m^\ast-1}_{k=0} \tilde{b}_k \mathcal{D}^k_u \, q(u,m-1) \,.
\end{eqnarray*}
\hfill
\end{proof}

Equation \eqref{eq:gain_no_hdifeq} is an homogeneous differential equation on $q(u,m)$ and $q(u,m-1)$ that can be solved recursively, since we know that $q(u,0)= 1-K_n(u/c)$. 

For computing $\omega_{q}(u,0)$ and $\widehat{\omega}_{q} (s,0)$,  needed in~\eqref{eq:gain_no_ide}, \eqref{eq:qum}  and  \eqref{eq:qumLT} as starting values, we can write the following theorem:
	\begin{theorem}\label{th_Wq+hatWq}
		In the Erlang$(n,\lambda)$ dual risk model, $\omega_{q}(u,0)$ and $\widehat{\omega}_{q} (s,0)$ are given by, 
\begin{eqnarray}
	\omega_{q}(u,0)
	&=&    \sum_{i=0}^{n-1} (-1)^{i}\, \frac{a^i}{i!} \mathcal{D}^i_s \left.
	\left(  \hat{p}(s) e^{-s\,u}  
	\right)\right|_{s=a} \,, \nonumber \\[1.5mm]
	&= & 
	 \sum_{i=0}^{n-1}   \frac{a^i}{i!} 
	\left(	\sum_{j=0}^{n-i-1} (-1)^j  \, \frac{a^j}{j !}\hat{p}^{(j)}(a)\right)\,u^{i}  \, e^{-a\,u}   \,,   \label{eq_Wqn}
\end{eqnarray}
and
\begin{equation} \label{eq_LTWqn}
\widehat{\omega}_{q} (s,0)
=   \sum_{i=0}^{n-1}   \frac{a^i}{i!} 
\left(	\sum_{j=0}^{n-i-1} (-1)^j  \,\frac{a^j}{j!} \hat{p}^{(j)}(a)\right)\, \frac{1}{(a+s)^{i+1}} \,,
\end{equation}
where  $a=\lambda/c$ and $ \left. \mathcal{D}^i_s \hat{p}(s)\right|_{s=a}= \hat{p}^{(i)}(a)$.
	\end{theorem}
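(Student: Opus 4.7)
My plan is to start from the defining relation \eqref{eq:qum_W} specialised at $m-1=0$, substituting $q(y,0)=1-K_n(y/c)$ and the Erlang series representation
$$1 - K_n(y/c) = e^{-ay}\sum_{i=0}^{n-1}\frac{(ay)^i}{i!}, \qquad a = \lambda/c.$$
After the change of variable $z=y-u$ and pulling out the finite sum from the integral, I obtain
$$\omega_q(u,0) = \sum_{i=0}^{n-1}\frac{a^i}{i!}\int_0^\infty (z+u)^i e^{-a(z+u)} p(z)\, dz.$$

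The first identity in \eqref{eq_Wqn} follows from the observation $(z+u)^i e^{-s(z+u)} = (-1)^i \mathcal{D}^i_s e^{-s(z+u)}$. Differentiation under the integral is justified because $s=a>0$ lies strictly inside the half-plane of absolute convergence of $\hat p$, and so
$$\int_0^\infty (z+u)^i e^{-a(z+u)} p(z)\, dz = (-1)^i \left.\mathcal{D}^i_s\bigl(\hat p(s) e^{-su}\bigr)\right|_{s=a},$$
yielding the derivative form directly. For the second (fully explicit) identity I would apply Leibniz's rule,
$$\left.\mathcal{D}^i_s\bigl(\hat p(s) e^{-su}\bigr)\right|_{s=a} = e^{-au}\sum_{l=0}^{i}\binom{i}{l}\hat p^{(l)}(a)(-u)^{i-l},$$
simplify the signs via $(-1)^i(-u)^{i-l}=(-1)^l u^{i-l}$, swap the order of summation with the reindex $k=i-l$ (so $0\le k\le n-1$ and $0\le l\le n-1-k$), and apply the identity $\frac{a^i}{i!}\binom{i}{l}=\frac{a^k}{k!}\cdot\frac{a^l}{l!}$ to collect coefficients of $u^k e^{-au}$ into the stated inner sum indexed by $j=l$.

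For \eqref{eq_LTWqn} the recipe is to take the Laplace transform of \eqref{eq_Wqn} term by term, using $\mathcal{L}[u^i e^{-au}](s)=i!/(s+a)^{i+1}$; no new ingredient is required. The main obstacle is purely combinatorial: keeping the signs and binomial factors straight in the double-sum rearrangement, and making sure the telescoping of factorials ${\binom{k+l}{l}}/{(k+l)!} = 1/(k!\,l!)$ is applied correctly. The analytic content (differentiating under the integral, evaluating $\hat p^{(l)}$ at the real point $a$) is routine, so once the index gymnastics are set up the conclusion is immediate.
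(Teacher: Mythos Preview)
Your proposal is correct and follows essentially the same route as the paper: both start from the definition $\omega_q(u,0)=\int_0^\infty q(x+u,0)\,p(x)\,dx$, substitute the Erlang survival series $q(y,0)=e^{-ay}\sum_{i=0}^{n-1}(ay)^i/i!$, recognise the integrals $\int_0^\infty x^k e^{-ax}p(x)\,dx=(-1)^k\hat p^{(k)}(a)$, and then use Leibniz/binomial identities together with the reindex $k=i-l$ and $\frac{a^{k+l}}{(k+l)!}\binom{k+l}{l}=\frac{a^k}{k!}\frac{a^l}{l!}$ to pass between the two displayed forms. The only cosmetic difference is the order: you obtain the derivative form first via differentiation under the integral and then expand, whereas the paper expands first and then packages the result via Leibniz; the Laplace transform step is identical.
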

\begin{proof}
	From \eqref{eq:qum_W}
\begin{eqnarray*}
	\omega_{q}(u,0) &= & \int_{0}^{\infty}{q}(x+u,0)\,p(x)dx \\
	&=& \int_{0}^{\infty}\left(  \sum_{i=0}^{n-1} e^{-a(u+x)} \frac{a^i}{i!}(x+u)^i \right) p(x)dx\\
	& & \\
	&=&   \sum_{i=0}^{n-1} e^{-a\,u} \frac{a^i}{i!}\sum_{k=0}^{i} \binom{i}{k} u^{i-k}   \int_{0}^{\infty}e^{-a\,x} x^k p(x)dx \,.
\end{eqnarray*}
setting $(x+u)^i= \sum_{k=0}^{i} \binom{i}{k} x^k u^{i-k}$. Using derivatives of the L.T.~of $p(x)$ evaluated at $a$, $ \left. \mathcal{D}^k_s \hat{p}(s)\right|_{s=a}= \hat{p}^{(k)}(a)$ and $\hat{p}^{(0)}(a)=\hat{p}(a)$, we write
\begin{eqnarray*}
	\omega_{q}(u,0)
	&=&   \sum_{i=0}^{n-1} e^{-a\,u} \frac{a^i}{i!}\sum_{k=0}^{i} \binom{i}{k} u^{i-k}   (-1)^k \hat{p}^{(k)}(a) \,.
\end{eqnarray*}
Using Leibnitz's rule for derivatives of product we get first expression for $\omega_{q}(u,0)$ in \eqref{eq_Wqn}. 

Let us locally denote that function as $\omega_{q, n} (u,0)$ to underline its  correspondence to Er\-lang$(n,\cdot)$  case.  We can calculate it recursively, for $n=1,2,\dots $, as follows,
\begin{eqnarray*}
 \omega_{q, 1} (u,0) &= & e^{-a\, u} \hat{p}(a)\,; \\
 \omega_{q, n} (u,0) & = & \omega_{q, n-1} (u,0) + (-1)^{n-1} \,  \frac{a^{n-1}}{(n-1)!} \, \left. \mathcal{D}^{n-1}_a 
 \left(  \hat{p}(s) e^{-s\,u}  \right)\right|_{s=a}\,.
\end{eqnarray*}
%
Following that, developing, we can also write it as in \eqref{eq_Wqn}. From there it is  immediate that 
the corresponding L.T.~corresponds to  \eqref{eq_LTWqn}
$\hfill$
\end{proof}

We can find a solution for $q(u,1)$  solving Integro-differential Equation~\eqref{eq:gain_no_ide}, so that we can write  the  theorem that follows.
\begin{theorem}\label{th_gain_no}
	In the Erlang$(n,\lambda)$ dual risk model, $q(u,1)$ is given by
	\begin{eqnarray} 
q(u,1) &= & \sum_{j=0}^{n-1} B_j \,  u^{n+j}\, e^{-a\,u} \label{eq_qu1} \\
 B_j &= & \frac{a^{n+j}}{(n+j)!} 
 	\left(	\sum_{i=0}^{n-1-j} (-1)^i  \, \frac{a^i}{i!}\hat{p}^{(i)}(a)\right)\,, \quad j=0,1,\dots, n-1, \label{eq_Bj_qu1} \,
	\end{eqnarray}
	and
	$$
\omega_{q, n} (u,1) = 	\sum_{j=1}^{n-1} \, B_j  \, (-1)^{n+j}   \mathcal{D}^{n+j}_s \left.\left( \hat{p}^{(k)}  (s) \,  e^{-s\,u}\right) \right|_{s=a} \,.
$$
\end{theorem}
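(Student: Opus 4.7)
The strategy is to solve the integro--differential equation~\eqref{eq:gain_no_ide} specialised to $m=1$, namely
\[
\left(\mathcal{I}+\tfrac{1}{a}\mathcal{D}_u\right)^{n}q(u,1)=\omega_{q}(u,0),
\]
subject to the boundary conditions $\mathcal{D}^{i}_u q(u,1)|_{u=0}=0$ for $i=0,\dots,n-1$ given in~\eqref{eq:qum_difn}, taking the explicit form of $\omega_q(u,0)$ from Theorem~\ref{th_Wq+hatWq}. Using~\eqref{eq_Wqn} I would first abbreviate
\[
\omega_{q}(u,0)=\sum_{i=0}^{n-1}C_{i}\,u^{i}e^{-au},\qquad C_{i}=\frac{a^{i}}{i!}\sum_{\ell=0}^{n-1-i}(-1)^{\ell}\frac{a^{\ell}}{\ell!}\hat{p}^{(\ell)}(a).
\]

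Next I would observe that the operator $(\mathcal{I}+a^{-1}\mathcal{D}_u)^{n}$ has characteristic root $-a$ with multiplicity $n$, so its homogeneous solutions are spanned by $\{u^{k}e^{-au}\}_{k=0}^{n-1}$, which are exactly the resonant modes appearing on the right-hand side. This forces a particular solution with the exponents shifted up by $n$: I would try the ansatz
\[
q(u,1)=\sum_{j=0}^{n-1}B_{j}\,u^{n+j}e^{-au}.
\]
The key computation is the direct cancellation
\[
\left(\mathcal{I}+\tfrac{1}{a}\mathcal{D}_u\right)\!\bigl[u^{n+j}e^{-au}\bigr]=u^{n+j}e^{-au}+\tfrac{1}{a}\bigl[(n+j)u^{n+j-1}-a\,u^{n+j}\bigr]e^{-au}=\frac{n+j}{a}\,u^{n+j-1}e^{-au},
\]
so by induction
\[
\left(\mathcal{I}+\tfrac{1}{a}\mathcal{D}_u\right)^{n}\!\bigl[u^{n+j}e^{-au}\bigr]=\frac{(n+j)!}{a^{n}\,j!}\,u^{j}e^{-au}.
\]
Matching coefficients of $u^{j}e^{-au}$ against $C_{j}$ gives $B_{j}a^{-n}(n+j)!/j!=C_{j}$, which rearranges to the stated formula~\eqref{eq_Bj_qu1}. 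The boundary conditions hold automatically, since every term in the ansatz vanishes to order $n$ at $u=0$, and the homogeneous part of the solution must then be identically zero.

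For the second assertion, I would plug the solution into the definition $\omega_{q,n}(u,1)=\int_{0}^{\infty}q(x+u,1)p(x)\,dx$ and use the identity $(x+u)^{n+j}e^{-a(x+u)}=(-1)^{n+j}\mathcal{D}^{n+j}_{s}e^{-s(x+u)}\big|_{s=a}$, then swap the derivative with the integral to recognise $\int_{0}^{\infty}e^{-sx}p(x)\,dx=\hat{p}(s)$, obtaining $\mathcal{D}^{n+j}_{s}[\hat{p}(s)e^{-su}]\big|_{s=a}$ in each summand.

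The main obstacle is purely bookkeeping --- correctly indexing the double sum in $\omega_q(u,0)$ and verifying that the coefficient $C_j$ produced by Theorem~\ref{th_Wq+hatWq} matches the expression obtained by applying $(\mathcal{I}+a^{-1}\mathcal{D}_u)^{n}$ to the resonant ansatz. Once the cancellation identity above is noted, the remainder is routine linear algebra, and no inversion of a nontrivial system is required because the ansatz has been chosen so that $B_{j}$ couples only to $C_{j}$.
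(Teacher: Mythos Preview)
Your proof is correct and follows the same overall strategy as the paper --- solve the ODE \eqref{eq:gain_no_ide} for $m=1$ by proposing the resonant ansatz $q(u,1)=\sum_{j=0}^{n-1}B_j u^{n+j}e^{-au}$, match coefficients against the explicit form of $\omega_q(u,0)$ from Theorem~\ref{th_Wq+hatWq}, and then compute $\omega_{q,n}(u,1)$ by recognising $(x+u)^{n+j}e^{-a(x+u)}$ as a derivative in the transform variable.

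Where the two arguments differ is in how the operator $(\mathcal{I}+a^{-1}\mathcal{D}_u)^n$ is applied to the ansatz. The paper expands $(\mathcal{I}+a^{-1}\mathcal{D}_u)^n$ by the binomial theorem, computes each derivative $q_{n,p}^{(l)}(u,1)$ via Leibniz, reindexes a triple sum, and then invokes the combinatorial identity $\sum_{i=0}^{n-r}\binom{n}{r+i}\binom{r+i}{r}(-1)^i=0$ for $r<n$ (Remark~\ref{rmk_null_sum}) to kill all but the top term. Your one-line observation that $(\mathcal{I}+a^{-1}\mathcal{D}_u)[u^{m}e^{-au}]=\tfrac{m}{a}u^{m-1}e^{-au}$ turns the operator into a pure lowering operator on these functions, so iterating $n$ times immediately gives $\tfrac{(n+j)!}{a^n j!}u^{j}e^{-au}$ and the coefficient matching is trivial. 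This bypasses the binomial expansion and the combinatorial lemma entirely, and also makes transparent why the ansatz diagonalises the problem (each $B_j$ couples only to $C_j$). Your treatment of the boundary conditions is likewise cleaner: since every term in the ansatz vanishes to order $n$ at $u=0$, the $n$ conditions \eqref{eq:qum_difn} are satisfied by the particular solution alone, forcing the homogeneous contribution to vanish --- the paper instead computes the successive derivatives of $\sum_k A_k u^k e^{-au}$ explicitly to reach the same conclusion.
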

\begin{proof}
	From \eqref{eq:gain_no_ide} we   write 
	\begin{equation}\label{eq_difn_qu1}
		\left(a\, \mathcal{I}+ \mathcal{D}_u\right)^{n}q(u,1)=  a^n\, \omega_{q}(u,0)  \,, 
		\end{equation}
		with $\omega_{q}(u,0)$ given by \eqref{eq_Wqn} and boundary conditions 	$\mathcal{D}^{i}_u q(u,1))\left.\right|_{u=0}=0$,  $i=0,1,\dots,n-1$.
		
		Let $q_{n,h}(u,1)$ and $q_{n,p}(u,1)$ be the solution of the homogeneous equation, 	
		\begin{equation} \label{eq:gain_no_idehom}
			\left(a\, \mathcal{I}+ \mathcal{D}_u\right)^{n}q(u,1)=0\,,
		\end{equation} 
		and a particular solution of the differential equation above, respectively. Thus, 	$q_{n}(u,1)=q_{n,h}(u,1)+q_{n,p}(u,1)$.

		First, let's deal with solution $q_{n,h}(u,1)$. The characteristic polynomial $(a+r)^n$ has one root $r=-a$ with multiplicity $n$. Hence, 
		\begin{equation*}
	q_{n,h}(u,1) = \sum_{k=0}^{n-1}A_k \, u^k\, e^{-a\,u} \,,
		\end{equation*}
		where $A_k$, $k=0,\dots, n-1$, are some constants, to be found using boundary conditions \eqref{eq:qum_difn}. Rewrite $q_{n,h}(u,1) = \left(A_0+ \sum_{k=1}^{n-1}A_k \, u^	k\right)\, e^{-a\,u}$, then $A_0=0$ using the boundary conditions. We can proceed recursively to find that all $A_k=0$, $k=1,\dots,n-1$:
	\begin{eqnarray*}
		q'_{n,h}(u,1) &= &A_1(1-a\,u)\,e^{-a\,u}+ \sum_{k=2}^{n-1}A_k \,\left( u^k\, e^{-a\,u} \right)' \\
		q"_{n,h}(u,1) &= &A_2(2-4a\,u+ a\, u^2)\,e^{-a\,u}+ \sum_{k=3}^{n-1}A_k \,\left( u^	k\, e^{-a\,u} \right)^"\\%
		& & \cdots \\
		q^{(n-1)}_{n,h}(u,1) &= &A_{n-1} \left((n-1 )! + \sum_{j=1}^{n-1} \frac{(n-1)!}{j!} 
		\binom{n-1}{j}(-a)^j\,u^j\, e^{-a\,u} \right)\,.
	\end{eqnarray*}
      Therefore, $q_{n,h}(u,1)\equiv 0$.

        As far as the particular solution is concerned, $q_{n,p}(u,1)$ must be linearly independent from the functions $e^{a\,u}, u\,e^{a\,u}, \dots, u^{n-1}\,e^{a\,u}$, since they generate  the homogeneous solutions. Also, it depends on the shape of the non-homogeneous component of the differential equation,  
        function $\omega_{q, n} (u,0)$  given by \eqref{eq_Wqn}. Therefore, we propose a solution of the form:
        \begin{equation*}
        	q_{n,p}(u,1) =  \sum_{j=0}^{n-1} B_j u^{n+j} \, e^{-a\,u}\, 
        \end{equation*}
		for some constants $B_j$, $j=0,1,\dots, n-1$. We now replace $q_{n,p}(u,1)$ in \eqref{eq_difn_qu1}, developing the lefthand side, 
		\begin{equation*} 
		\sum_{	l=0}^{n}\binom{n}{l}a^{n-l}q^{(l)}_{n,p}(u,1) =
			a^n\, \sum_{k=0}^{n-1}   \frac{a^k}{k!} 	\left(	\sum_{j=0}^{n-k-1} (-1)^j  \, \frac{a^j}{j !}\hat{p}^{(j)}(a)\right)\,u^{k}  \, e^{-a\,u} \,,
		\end{equation*}
		where the $l$-th derivative is given by
	\begin{equation*}
		q^{(l)}_{n,p}(u,1) = \sum_{j=0}^{n-1} B_j \sum_{i=0}^{l}\frac{(n+j)!}{(n+j-l+i)!} 
		\binom{l}{i}(-a)^i\,u^{n+j-l+i } \, e^{-a\,u} \,,
	\end{equation*}
so that  we get for the lefthand side, also changing sum order, setting $\binom{l}{i}=\binom{l}{l-i}$ and changing the summation variable $r=l-i$,
	\begin{equation}\label{eq_difn_qu1a}
\sum_{	l=0}^{n}\binom{n}{l}a^{n-l}q^{(l)}_{n,p}(u,1)  = 
      \sum_{j=0}^{n-1} B_j \sum_{r=0}^{n} a^{n-r}  \frac{(n+j)!}{(n+j-r)!} \, u^{n+j-r} 
\left[\sum_{i=0}^{n-r} \binom{n}{r+i} \binom{r+i}{r}  (-1)^i \right] \, e^{-a\,u} \,.
\end{equation}
		The sum inside the square brackets is equal to zero for $r\in {0,1,\dots, n-1}$ (see Remark~\ref{rmk_null_sum} that follows) so that 
			\begin{equation*}
			\sum_{	l=0}^{n}\binom{n}{l}a^{n-l}q^{(l)}_{n,p}(u,1)  = 
			\sum_{j=0}^{n-1} B_j   \frac{(n+j)!}{(j!} \, u^j \, e^{-a\,u} \,.
		\end{equation*}
		
		Now, Equation~\eqref{eq_difn_qu1a} becomes,
			\begin{equation*} 
		\sum_{j=0}^{n-1} B_j   \frac{(n+j)!}{j!} \, u^j \, e^{-a\,u} = a^n
		\sum_{k=0}^{n-1}   \frac{a^k}{k!} 	\left(	\sum_{j=0}^{n-k-1} (-1)^j  \, \frac{a^j}{j !}\hat{p}^{(j)}(a)\right)\,u^{k}  \, e^{-a\,u} \,,
		\end{equation*}
		and equating the coefficients we get 
		\begin{equation*} 
	 B_j  =  \frac{a^{n+j}}{(n+j)!} 
	\left(	\sum_{i=0}^{n-1-j} (-1)^i  \, \frac{a^i}{i!}\hat{p}^{(i)}(a)\right)\, \quad  
		j=0,1,\dots, n-1 \,,
		\end{equation*}
		and we get \eqref{eq_qu1}, since $q_{n,h}(u,1)\equiv 0$.
		
		To find $\omega_{q, n} (u,1)$, we start from \eqref{eq:qum_W} that
		\begin{eqnarray*}
		\omega_{q, n} (u,1) & =& \int_{0}^{\infty} \sum_{j=0}^{n-1} \, B_j  \, (x+u)^{n+j} \, e^{-a(x+u)} p(x) \, dx \\
	& =& \sum_{j=0}^{n-1} \, B_j \left[ \sum_{k=0}^{n+j} \binom{{n+j}}{k}\, (-1)^k \hat{p}^{(k)}  (a) \,u^{u+j-k}\,  e^{-au}\right] \\
	 &=& \sum_{j=0}^{n-1} \, B_j  \, (-1)^{n+j}  \, \mathcal{D}^{n+j}_s \left. \left(\hat{p}  (s) \,  e^{-s\,u}\right)\right|_{s=a} \,,
		\end{eqnarray*}
	calculating the $k$-th derivative of the L.T.~and then transforming the  expression inside the square brackets into the $(n+j)$-th derivative of the product $\left( \hat{p}  (s) \right)\,  \left(e^{-s\,u}\right)$.
		$\hfill$
\end{proof}
\begin{remark}\label{rmk_null_sum}
\begin{eqnarray*}
		\sum_{i=0}^{n-r} \binom{n}{r+i} \binom{r+i}{r}  (-1)^i  & = &
		\sum_{i=0}^{n-r} \frac{n!}{(n-r-i)! \, r! \, i!} (-1)^i  \\
		&=& \frac{n(n-1)\dots (n-r+1)}{r!} \, \sum_{i=0}^{n-r}  \binom{n-r}{i}\, 1^{n-r-i}\,(-1)^i \\
		& = & \frac{n(n-1)\dots (n-r+1)}{r!} \, (1-1)^{n-r} =0\, 
\end{eqnarray*}
for all $ n=1,2,\dots$ and $r=0,1,\dots, n-1\,$.
\end{remark}
As a last remark, we can add another expression for $\omega_{q, n} (u,1)$:
\begin{remark}
	We can use induction, for $n=1,2,\dots$,  to show that 
\begin{equation*}
\omega_{q, n} (u,1) = e^{-au}\,\sum_{j=0}^{2n-1}\, \,u^{j}\,  \sum_{i=\max(j,n)}^{2n-1}  \, B_{i-n}  \binom{{i}}{j}\, (-1)^{i-j} \hat{p}^{(i-j)}  (a)  \, .
\end{equation*}
\end{remark}

To calculate the pair $\{ q_n (u,m), \, \omega_{q, n} (u,m)\}$ for higher integer $m$ we can proceed like in the diagram:
$$ \{ {q_n} (u,0)\rightarrow \omega_{q, 0} (u,0)\} \rightarrow
\{ {q_n} (u,1)\rightarrow \omega_{q, n} (u,1)\} \rightarrow
\{ {q_n} (u,2)\rightarrow \omega_{q, n} (u,2)\} \rightarrow \dots \,,
$$
and so on...
\section{On the number of gains to reach a given upper target}
\label{s:no_div}
We work here the probability function $r(u,b,m)$, $m=1,2,\dots$, as the probability of having exactly $m$ gains to reach an upper target $b$, like an upper barrier or a dividend barrier, given initial surplus $u$, irrespective of ruin. When the target is reached it is exactly at the instant of a gain arrival, obviously, at least one gain is needed. For $u\geq 0$, we have
\begin{eqnarray}
	r(u,b,1) &=& \int_{0}^{\infty}k_n(t)[1-P(b-u+ct)]dt \label{rq:rub1}\\
	r(u,b,m) &=& \int_{0}^{\infty}k_n(t)\int_{0}^{b-u+ct}r(u-ct+x,b,m-1)p(x)dxdt\,, \text{ \ } m=2,3,\dots \label{rq:rubm}
\end{eqnarray}
For simplification we set $v=b-u$ so that $r(u,b,m)= r(0,v,m)$, with $b\geq u\geq 0$ and $m\in \mathbb{N}$. Also write $a=\lambda/c$.
 \begin{theorem}\label{theo:gainup_no_ide}
	In the Erlang$(n,\lambda)$ dual risk model, $r(0,v,m)=r(u,b,m)$ given by formula \eqref{rq:rub1}  satisfies the integro--differential equations 
	\begin{eqnarray}\label{eq:theo:gainup_no_ide}
	\left(\mathcal{I}-\left(\frac{c}{\lambda}\right)\mathcal{D}_v\right)^{n} r(0,v,1)=  a\left[1-P(v)\right]\, \quad n, m \in\mathbb{N}  \, . \nonumber
	\end{eqnarray}
	%
	%
	Boundary conditions can be found from, for $i=0,1,\dots,n-1$,
	\begin{eqnarray}\label{eq:r0v1_difn}
\lim_{v\downarrow 0}	\left(\mathcal{I}-\left(\frac{c}{\lambda}\right)\mathcal{D}_v\right)^{i} r(0,v,1)&=& 1-\sum_{k=0}^{n-i-1}\frac{{(-a)}^k}{k!}\mathcal{D}_s^{\,k}
\left.	\hat{p}(s)\right|_{s=a} ds\,. 
	\end{eqnarray}
\end{theorem}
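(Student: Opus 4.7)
The plan is to mimic the inductive strategy of Theorems~\ref{thm_intdif1} and~\ref{theo_gain_no}, peeling off one Erlang kernel at a time via the derivative identities in~\eqref{eq:k-derivatives}.

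First I would rewrite~\eqref{rq:rub1} under the substitution $v=b-u$ followed by $s=v+ct$ in the compact form
\[
r(0,v,1)=\frac{1}{c}\int_{v}^{\infty}k_n\!\left(\tfrac{s-v}{c}\right)[1-P(s)]\,ds,
\]
and introduce, for $j=1,\dots,n$, the auxiliary family
\[
r_{j}(0,v,1):=\frac{1}{c}\int_{v}^{\infty}k_{j}\!\left(\tfrac{s-v}{c}\right)[1-P(s)]\,ds,
\]
so that $r_n(0,v,1)$ coincides with $r(0,v,1)$. Differentiating with respect to $v$ and invoking $k_j'=\lambda(k_{j-1}-k_j)$ together with $k_j(0)=0$ for $j\geq 2$ makes the boundary term at $s=v$ vanish and produces the one-step recursion
\[
\Bigl(\mathcal{I}-\tfrac{c}{\lambda}\mathcal{D}_v\Bigr)\,r_{j}(0,v,1)=r_{j-1}(0,v,1),\qquad j=2,\dots,n.
\]
Iterating $n-1$ times yields $\bigl(\mathcal{I}-\tfrac{c}{\lambda}\mathcal{D}_v\bigr)^{n-1}r(0,v,1)=r_1(0,v,1)$, and a final direct computation applied to $r_1(0,v,1)=a\,e^{av}\!\int_v^{\infty}e^{-as}[1-P(s)]\,ds$ --- where now $k_1(0)=\lambda$ so the lower boundary does contribute --- delivers the stated integro--differential equation.

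For the boundary conditions I would reuse the identity $\bigl(\mathcal{I}-\tfrac{c}{\lambda}\mathcal{D}_v\bigr)^{i}r(0,v,1)=r_{n-i}(0,v,1)$ obtained en route, and simply evaluate the right-hand side at $v=0$. Substituting the series expansion $1-K_m(x)=\sum_{\ell=0}^{m-1}(\lambda x)^{\ell}e^{-\lambda x}/\ell!$ inside $r_{n-i}(0,0,1)=\int_0^{\infty}k_{n-i}(t)[1-P(ct)]\,dt$ (after an integration by parts to move the Erlang mass onto the distribution function) reduces the problem to the Laplace-transform moment identities $\int_0^{\infty}s^{\ell}e^{-as}p(s)\,ds=(-1)^{\ell}\hat{p}^{(\ell)}(a)$, which assemble into the alternating sum on the right-hand side of~\eqref{eq:r0v1_difn}.

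The main obstacle is the book-keeping around the boundary term $k_j(0)$: one must track at which step in the induction it stops being zero, namely only at $j=1$, and this is precisely what forces the last application of the operator to produce the non-homogeneous contribution $[1-P(v)]$ rather than yield a further kernel reduction. Once this point is handled carefully, the remaining checks --- differentiation under the improper integral and the Laplace-transform moment identity --- are routine under the standing assumption that $P$ admits a density.
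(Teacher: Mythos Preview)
Your proposal is correct and follows essentially the same route as the paper's own proof: the same change of variable $s=v+ct$, the same auxiliary family built from lower-order Erlang kernels, the same one-step recursion $\bigl(\mathcal{I}-\tfrac{c}{\lambda}\mathcal{D}_v\bigr)r_j=r_{j-1}$ obtained from the identity $k_j'=\lambda(k_{j-1}-k_j)$, and the same treatment of the boundary conditions via integration by parts and the Laplace-transform moment identities $\int_0^\infty s^\ell e^{-as}p(s)\,ds=(-1)^\ell\hat p^{(\ell)}(a)$. Your indexing $r_j$ (with $r_j$ using kernel $k_j$) is in fact cleaner than the paper's notation $r_{n-i,n}$, and your explicit flagging of the boundary term $k_j(0)$ --- vanishing for $j\ge 2$ but contributing at $j=1$ --- is exactly the mechanism the paper relies on to produce the non-homogeneous term.
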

\begin{proof}
Setting $b-u=v$ and $s=v+ct$, $r(0,v,1)$ and its derivative with respect to $v$, $\mathcal{D}_v \, r(0,v,1)$  are
\begin{eqnarray*}
r(0,v,1) & = & c^{-1}\int_{v}^{\infty}k_n\left(\frac{s-v}{c}\right)\bar{P}(s) ds \\
\mathcal{D}_v r(0,v,1) &= & a \left(r(0,v,1)-r_{n-1,n}(0,v,1)\right)\,,
\end{eqnarray*}
where 
\begin{equation*}
r_{n-i,n}(0,v,1)=c^{-1}\int_{v}^{\infty}k_i\left(\frac{s-v}{c}\right)\bar{P}(s) ds\,, \quad \text{for \ } i=0,1,\dots, n-1\,,
\end{equation*}
with $r_{n-n,n}(0,v,1)=r(0,v,1)$.

Recursively we get, for $i=1,2,\dots$,
\begin{eqnarray*}
	\left(\mathcal{I}-\left(\frac{c}{\lambda}\right)\mathcal{D}_v\right)^{i} r(0,v,1) & =&  r_{n-i,n}(0,v,1)\,, \quad \text{and}\,,\\
\left(\mathcal{I}-\left(\frac{c}{\lambda}\right)\mathcal{D}_v\right)^{n} r(0,v,1) & = &	\left(\mathcal{I}-\left(\frac{c}{\lambda}\right)\mathcal{D}_v\right)
r_{1,n}(0,v,1)
=	a\left[1-P(v)\right]\,.
\end{eqnarray*}
	For the boundary conditions, note that for $i=0,1,\dots,n-1$ we have
	\begin{eqnarray*}
\lim_{v\downarrow 0}	\left(\mathcal{I}-\left(\frac{c}{\lambda}\right)\mathcal{D}_v\right)^{i} r(0,v,1)&=& 	r_{n-i,n}(0,0,1)= c^{-1}\int_{0}^{\infty}k_i\left({s}/{c}\right)\bar{P}(s) ds
	\\
	&=& \int_{0}^{\infty}K_i\left({s}/{c}\right){p}(s) ds\\
	&=& 1-\sum_{k=0}^{n-i-1}\frac{{a}^k}{k!}\int_{0}^{\infty}e^{-a\,s} \,s^k p(s) ds\\
	&=& 1-\sum_{k=0}^{n-i-1}\frac{{(-a)}^k}{k!}\mathcal{D}_s^{\,k}
	\left.	\hat{p}(s)\right|_{s=a} ds\,, 
	\end{eqnarray*}
integrating by parts and noting that $\hat{p}(s)=\mathbb{E}[e^{-s\,X}]$ is the Laplace transform of density $p(s)$.
 
\hfill
\end{proof}
\begin{corollary} 
Boundary conditions got from \eqref{eq:r0v1_difn} give, particularly,
		\begin{eqnarray}
r(0,0,1)&=& 1-\sum_{k=0}^{n-1}\frac{{(-a)}^k}{k!}\mathcal{D}_s^{\,k}
\left.	\hat{p}(s)\right|_{s=a} \, ;   \label{eq:rub1_dif0n}\\
r'(0,0,1)&=& \frac{{(-a)}^n}{(n-1)!}\mathcal{D}^{n-1}_s \left.	\hat{p}(s)\right|_{s=a}\,;  \label{eq:rub1_difn} 
%
\end{eqnarray}
For $i=2,\dots,n-1$ and $n=2,3,\dots\,$\,,
\begin{eqnarray} \label{eq:ri001_difn}
r^{(i)}(0,0,1) &=& (-a)^{i}\left[ 1-\sum_{k=0}^{n-1-i} \frac{(-a)^{k}}{k!}\hat{p}^{(k)}(a)-\sum_{k=0}^{i-1}\binom{i}{k}(-a)^{-k} \,  r^{(k)} (0,0,1)\right] \,,
\end{eqnarray}
particularly,
\begin{equation*} \label{eq:rn-1001_difn}
r^{(n-1)}(0,0,1) = (-a)^{n-1}\left[ 1-\hat{p}(a)-\sum_{k=0}^{n-2}\binom{n-1}{k}(-a)^{-k} \, r^{(k)} (0,0,1)\right] \,,
\end{equation*}
where $\,r^{(k)} (0,0,1) = \mathcal{D}_v^{\,k}\left. r (0,v,1)\right|_{v=0}\,$ and $\, \hat{p}^{(k)}(a)= \mathcal{D}_s^{\,k}
\left.	\hat{p}(s)\right|_{s=a} $.
\end{corollary}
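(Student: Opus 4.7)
The plan is to derive each of \eqref{eq:rub1_dif0n}, \eqref{eq:rub1_difn} and \eqref{eq:ri001_difn} directly from the single identity \eqref{eq:r0v1_difn} of Theorem~\ref{theo:gainup_no_ide}, by specializing the index $i$ and then expanding the operator $(\mathcal{I}-a^{-1}\mathcal{D}_v)^{i}$ (with $a=\lambda/c$) via the binomial theorem. No further analytic input is required beyond what Theorem~\ref{theo:gainup_no_ide} already provides; the whole argument is purely algebraic bookkeeping.

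For \eqref{eq:rub1_dif0n} I would take $i=0$ in \eqref{eq:r0v1_difn}: the operator collapses to the identity and the assertion is immediate. For \eqref{eq:rub1_difn} I would set $i=1$, which gives $r(0,0,1)-a^{-1}r'(0,0,1)=1-\sum_{k=0}^{n-2}\frac{(-a)^{k}}{k!}\hat{p}^{(k)}(a)$; subtracting the expression for $r(0,0,1)$ already obtained leaves exactly the residual term $\frac{(-a)^{n-1}}{(n-1)!}\hat{p}^{(n-1)}(a)$, and multiplying by $-a$ (using $-a\cdot(-a)^{n-1}=(-a)^{n}$) produces \eqref{eq:rub1_difn}.

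For the general recursion \eqref{eq:ri001_difn}, with $2\le i\le n-1$, I would use the binomial expansion
\[
(\mathcal{I}-a^{-1}\mathcal{D}_v)^{i}\,r(0,v,1)=\sum_{k=0}^{i}\binom{i}{k}(-a)^{-k}\,r^{(k)}(0,v,1),
\]
noting the identity $(-a)^{-k}=(-1)^{k}a^{-k}$. Evaluating at $v=0$, equating with the right-hand side of \eqref{eq:r0v1_difn}, isolating the top-order term whose coefficient is $(-a)^{-i}$, and finally multiplying through by $(-a)^{i}$ delivers \eqref{eq:ri001_difn} exactly. The recursion is closed because the lower-order values $r^{(k)}(0,0,1)$, $k<i$, are determined by the same scheme applied at smaller indices; the displayed formula for $r^{(n-1)}(0,0,1)$ then follows by observing that the inner sum $\sum_{k=0}^{n-1-i}$ reduces, at $i=n-1$, to its single $k=0$ term $\hat{p}(a)$.

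The main (mild) obstacle is sign bookkeeping: one has to track consistently the interplay between $(-a)^{-k}$, $(-1)^{k}a^{-k}$ and the partial telescoping that occurs when the expression for $r(0,0,1)$ from the first step is used to cancel the leading part of the truncated sum $\sum_{k=0}^{n-i-1}\frac{(-a)^{k}}{k!}\hat{p}^{(k)}(a)$ inside \eqref{eq:r0v1_difn}. Once the convention $(-a)^{-k}=(-1)^{k}a^{-k}$ is fixed, the rest is mechanical.
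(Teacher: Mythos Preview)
Your proposal is correct and follows essentially the same approach as the paper: the paper's proof simply records the binomial expansion $\lim_{v\downarrow 0}(\mathcal{I}-a^{-1}\mathcal{D}_v)^{i} r(0,v,1)= \sum_{k=0}^{i}\binom{i}{k}(-a)^{-k}\, r^{(k)}(0,0,1)$, equates it with the right-hand side of \eqref{eq:r0v1_difn}, and declares the result, which is exactly the manipulation you spell out in more detail for $i=0$, $i=1$, and general $i$.
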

\begin{proof}
	Note that 
		\begin{equation*}
		r_{n-i,n}(0,0,1)=  1-\sum_{k=0}^{n-i-1}\frac{{(-a)}^k}{k!}\mathcal{D}_s^{\,k}
		\left.	\hat{p}(s)\right|_{s=a} ds\,, 
	\end{equation*}
	and that $\lim_{v\downarrow 0}	\left(\mathcal{I}-\left(\frac{c}{\lambda}\right)\mathcal{D}_v\right)^{i} r(0,v,1)= \sum_{k=0}^{i}\binom{i}{k}(-a)^{-k} \, r^{(k)} (0,0,1)\,$, then result follows. \hfill
\end{proof}

\begin{theorem}
		In the Erlang$(n,\lambda)$ dual risk model, for $m=2,3, \dots\,$, $r(0,0,m)=r(b,b,m)$ given by formula \eqref{rq:rubm}  can be computed recursively as
	\begin{equation*} \label{eq:r00m}
r(0,0,m) =  \sum_{k=0}^{n-1}\frac{{(-a)}^k}{k!}\mathcal{D}_s^{\,k}
	\left. 	s\,\hat{p}(s) \hat{r}(0,s,m-1)\right|_{s=a}\, , 
	\end{equation*}
	where $r(0,0,1)$ is given by \eqref{eq:rub1_dif0n} and $\hat{r}(0,s,m)$  is  the L.T.~ of ${r}(0,v,m)$ evaluated at $s$.
\end{theorem}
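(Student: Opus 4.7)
The plan is to start from \eqref{rq:rubm}, rewrite it in the variable $v=b-u$, specialise to $v=0$ (that is, $u=b$), and recognise the resulting integral as a derivative of a Laplace transform evaluated at $s=a$. First, using the substitution $u=b-v$ already exploited in Theorem~\ref{theo:gainup_no_ide}, together with the translation invariance $r(u',b,m-1)=r(0,b-u',m-1)$, the defining equation becomes, for $m\ge 2$,
\[
r(0,v,m)=\int_{0}^{\infty}k_n(t)\int_{0}^{v+ct}r(0,v+ct-x,m-1)\,p(x)\,dx\,dt,
\]
and the change of variable $y=v+ct$ puts it into the compact form
\[
r(0,v,m)=\frac{1}{c}\int_{v}^{\infty}k_n\!\left(\frac{y-v}{c}\right)f(y)\,dy,\qquad f(y):=\int_{0}^{y}r(0,y-x,m-1)\,p(x)\,dx.
\]
Setting $v=0$ and inserting $k_n(t)=\lambda^n t^{n-1}e^{-\lambda t}/(n-1)!$ with $a=\lambda/c$ yields
\[
r(0,0,m)=\frac{a^{n}}{(n-1)!}\int_{0}^{\infty}y^{n-1}e^{-ay}f(y)\,dy.
\]

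Second, since $f$ is the Laplace convolution of $r(0,\cdot,m-1)$ with $p$, its Laplace transform factors as $\hat f(s)=\hat p(s)\,\hat r(0,s,m-1)$. The moment-weighted integral above equals $(-1)^{n-1}\hat f^{(n-1)}(a)$ by standard Laplace-transform calculus (the $k$-th derivative of a Laplace transform at $s=a$ produces $(-1)^{k}$ times the integral of $y^{k}e^{-ay}$ against the original function). Hence
\[
r(0,0,m)=\frac{(-a)^{n-1}a}{(n-1)!}\,\hat f^{(n-1)}(a).
\]

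Third, I reconcile this closed form with the sum in the statement. Let $h(s):=s\hat f(s)=s\,\hat p(s)\,\hat r(0,s,m-1)$. Leibniz's rule gives $h^{(k)}(s)=s\hat f^{(k)}(s)+k\,\hat f^{(k-1)}(s)$ for $k\ge 1$, with $h^{(0)}(s)=s\hat f(s)$. Substituting into the claimed sum and re-indexing $j=k-1$ in the ``boundary'' piece produces a telescoping cancellation,
\[
\sum_{k=0}^{n-1}\frac{(-a)^{k}}{k!}h^{(k)}(a)=a\sum_{k=0}^{n-1}\frac{(-a)^{k}}{k!}\hat f^{(k)}(a)-a\sum_{j=0}^{n-2}\frac{(-a)^{j}}{j!}\hat f^{(j)}(a)=\frac{(-a)^{n-1}a}{(n-1)!}\hat f^{(n-1)}(a),
\]
which matches the expression from the second step, completing the proof. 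The conceptual key is the convolution/Laplace identification; the main technical step is the telescoping identity, which is precisely where the specific factor $s\,\hat p(s)\,\hat r(0,s,m-1)$ in the statement originates.
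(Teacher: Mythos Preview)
Your proof is correct, but the route differs from the paper's in an instructive way. The paper also changes variable to $y=ct$ and recognises the inner integral as the convolution $(p\ast r(0,\cdot,m-1))(y)$, but then it writes $r(0,0,m)$ as a Stieltjes integral against $-\bar K_n(y/c)$ and integrates by parts. Because the Erlang$(n)$ survival function is itself the sum $\sum_{k=0}^{n-1}\frac{a^k}{k!}y^k e^{-ay}$, the sum in the statement falls out immediately, and the extra factor $s$ appears as the Laplace transform of the \emph{derivative} $(p\ast r)'$ produced by the parts step (the boundary term vanishes since the convolution is zero at $y=0$). By contrast, you stay with the Erlang \emph{density}, which gives you a single term $\frac{(-a)^{n-1}a}{(n-1)!}\hat f^{(n-1)}(a)$, and then you manufacture the sum via the telescoping identity for $h(s)=s\hat f(s)$. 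Your approach avoids the integration by parts and makes transparent that the statement's sum collapses to a single derivative; the paper's approach explains more conceptually \emph{why} the sum and the factor $s$ arise (they come, respectively, from the survival-function expansion and from differentiating the convolution). Both are short and valid.
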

\begin{proof}
	We have that 
	\begin{eqnarray*}
	r(0,0,m) &=& \int_{0}^{\infty}k_n(t)\int_{0}^{ct}r(-ct+x,0,m-1)p(x)dxdt\, \\
	 &=& c^{-1} \int_{0}^{\infty}k_n(y/c) \int_{0}^{y}r(0,y-x,m-1)p(x)dxdy \\
	 &=& \int_{0}^{\infty}\left[ p \ast r(0,y,m-1)\right] \, d  \left[-\bar{K}_n(y/c)\right] \, ,
	\end{eqnarray*}
	noting that $r(x-ct,0,m-1)=r(0,x-ct,0,m-1)$, setting $y=ct$,  and that `$\ast$´ is the convolution sign. Integrating by parts, placing the survival function formula $\bar{K}_n(y/c)$, we get
	\begin{eqnarray*}
		r(0,0,m) &=& \sum_{k=0}^{n-1}\frac{a^k}{k!} \int_{y=0}^{\infty}\, e^{-ay}\, y^k  p\ast r ' (0,y,m-1)dy \\
		&=& \sum_{k=0}^{n-1} \frac{(-a)^k}{k!} \left.  \mathcal{D}_s^{\,k} \widehat{p\ast r' } (0,s,m-1) \right|_{s=a}\\
	& =&	\sum_{k=0}^{n-1} \frac{(-a)^k}{k!} \left.  \mathcal{D}_s^{\,k} s \, \hat{p}(s) \, \hat{r}(0,s,m-1) \right|_{s=a}\,,
	\end{eqnarray*}
using L.T.~ properties.
\hfill
\end{proof}

\section{Examples}
In this section we run some examples for the different problems dealt in previous sections. Some examples we consider are similar to those of \cite{rodriguez2015some}, \cite{afonso2013dividend}, here transposed for  the $Erlang(2,\lambda)$ distributed inter-arrival times. 
%

The particular cases dealt are (`$W-X$'~distributed): $Erlang(2,\lambda)-Erlang(2,\beta)$ and $Erlang(2,\lambda)-$ Combination of exponentials. 
Here, in some cases {we show figures for several values of slope $c$ to understand its impact on final numbers. We consider cases where the income condition \eqref{eq:netprofit} is either satisfied or not. For instance, $c=0.75,\, 1,\, 2.1$. Furthermore, we set $\delta=0.02$, wherever a positive interest force applies}. 
%
%
\subsection{Example Erlang$(2,\lambda)$-Erlang$(2,\beta)$} \ 
%
{Consider first problem dealt in Section~\ref{s:expect_div}, on  the \textbf{Expected Discounted Dividends}. In this particular case we get figures for the expectation by inverting the L.T.~of
	{$\phi_k(u)=\mathbb{E}\left( e^{-\delta \tau_{u}}D_{u}^k \right)$ 
given by, from \eqref{eq_phik_hat},
		$$
		\widehat{\phi}_k(s) = \frac{ (c^2s-2c(\lambda+\delta)) {\tilde{\phi}}(0) + c^2 
			{\tilde{\phi}}'(0) + \lambda^2 p_{{1}} \frac{1-\widehat{p}_{
					{1}}(s)}{s} }
		{(\lambda + \delta - c s)^2 - \lambda^2 \widehat{p}(s)}\,,  
		$$
		%
	{obtaining first $\tilde{\phi}(z)$ and then $\phi(u)=\tilde{\phi}(b-u)$}.
	We show in Tables~\ref{T:ex_v_erlang_c0.75}-\ref{T:ex_v(u,b)_erlang_c2.1} figures for $\mathbb{E}\left( e^{-\delta \tau_{u}}\right)$, $\mathbb{E}\left( e^{-\delta
	\tau_{u}}D_{u}\right)$ and $V(u,b,0.02)$. Similarly as \cite{afonso2013dividend}, in the same tables we also show figures for the probability of getting a dividend, $\chi(u,b)$, although it was calculated from formulae developed in Section \ref{s:single_div}, as the limiting form $\lim_{x \rightarrow \infty}G(u,b;x)=\chi(u,b)$.  We set other parameter values as $\lambda = 2$, $\beta = 1$, as well as a set of several values of $(u,b)$ shown in table captions. }

In Tables~\ref{T:ex_v_erlang_c0.75}-\ref{T:ex_v_erlang_c2.1} we set the initial value equal to the dividend level, i.e., $u=b=1,2,\dots, 10$. Particularly, in Table~\ref{T:ex_v_erlang_c2.1} we exemplify a situation where the income condition is non-standard, we mean reversed. Not surprisingly, for a higher $c$ we have  smaller expectations, correspondingly. Relatively speaking, in each table, the smaller the $b$ the larger is the first expected discounted dividend relative to the whole set of (expected) discounted dividends. 

For the set of Tables~\ref{T:ex_v(u,b)_erlang_c0.75}-\ref{T:ex_v(u,b)_erlang_c2.1} we chose paired values $\{(u,b)=(1,2),\, (1,9),\, (3,6),\, (5,9)\} $, show figures for the corresponding expectations as before and show a column with the product of column~(2) with (3), $\mathbb{E}\left( e^{-0.02 \tau_{u}}\right)  \times  V(b,b,0.02)$, to show the size of the first discounted dividend $\mathbb{E}\left( e^{-0.02
	\tau_{u}}D_{u}\right)$ with the remainder future ones.  The relative contribution is higher for a higher $c$, correspondingly, for a fixed $c$ it decreases when $u$ and $b$  increase.  

For the {\bf{Distribution of a Single Dividend Amount}}, problem dealt in Section~\ref{s:single_div}, we produced some figures for $G(u,b,x)$, for several values of $u$ and $x$, $\{b=5,10\}$, with parameter values {{$\lambda=2$, $\beta=1$}} and $c=1$,  where the income condition is satisfied. We show these figures in
Tables \ref{T:ex_div_b5} and \ref{T:ex_div_b10}, respectively. We recall that $G(0,b,x)=0,\, \ \forall\, b,x$ and that $\lim_{x \rightarrow \infty}G(u,b;x) = \chi(u,b) \,$.

 Figure~\ref{F:G(u,5,x)} 
shows graphs for $G(u,b,x)$ as function of $u$, in the left side for $\{x=1,2,3,4,5\}$, $b=5 $,  and $\{x=1,2,3,5,7,9,10\}$, $b=10 $ in the right side. 

%
%
%
Figure~\ref{F:g(u,5,x)-densi} shows graphs for the corresponding density functions, defective and proper, $g(u,b,x)$ and $\tilde{g}(u,5,x)= g(u,5,x)/\chi(u,5)$, for different values of $u=1,2,\dots,5$, with $b=5$, $\lambda=2$, $\beta=1$ and $c=1$. The left graph shows the defective $g(u,5,x)$ and the right one the conditional $\tilde{g}(u,5,x)$ (this one shows less curves since some are too similar).
%
%
%
Figure~\ref{F:g(u,5,x)-densi-c2.1} shows graphs for corresponding densities, with same parameters except  $c=2.1$, i.e.,
when the income condition is reversed.

On the \textbf{Number of gains prior to ruin} dealt in Section~\ref{s:no_ruin}
we set $c=1$, $\lambda=2$ and $\mu=1$. In Table \ref{T:ex_number_gains} we present the values of the probability of having $m$ gains prior to ruin, $q(u,m)$, for different values of $u$ and $m$ ($m=1,\dots,5$, $u=0.2,0.5,0.7,1,3,5,10$) where $m$ is the number of gains prior to ruin and $u$ the initial surplus.
%
In Figure \ref{F:q(1,m)} we show a graph of the probability function  $q(u,m)$ with  $u=1$. We can observe that it has a slow decreasing tail, and great probability mass concentrated on $m=1,2$ [it is quite similar to those of \cite[Figures~6-7]{egidio2002many}] . 

For the {\bf Number of gains before reaching a given upper target}, or a dividend level, considering the probability of having exactly $m$ gains to reach an upper target $b$, denoted as $r(u,b,m)$ we set the same above parameter values $\{c=1, \lambda=2, \beta=1\}$, and additionally $\{c=0.75, \lambda=1, \beta=1\}$. 
 Tables \ref{T:ex_number_target_b5} and \ref{T:ex_number_target_b10} correspond to first parameter set, Tables~\ref{T:ex2_number_target_b5} and 	\ref{T:ex2_number_target_b10} correspond to the second one. There, we present  values of $r(u,b,m)$ for different values of $u$ and $m$, where $m$ is the number of gains and we have set the upper limit, $b=5$ and $b=10$, respectively.

We can observe that the values from Table~\ref{T:ex_number_target_b5} in the columns corresponding to $u=3$ and $u=5$ and the values in columns corresponding $u=8$ and $u=10$ of Table~\ref{T:ex_number_target_b10} are the same. This is due to the fact that the difference $v=b-u$ is the same, equal to 2 and 0, respectively. Similar remark is applicable to Tables~\ref{T:ex2_number_target_b5} and 	\ref{T:ex2_number_target_b10}. Another remark worth mentioning is that in all Tables~\ref{T:ex_number_target_b5}-\ref{T:ex2_number_target_b10} for $m=1$ and $v=b-u=0$, we have the same figures for that probability. The same happens in Tables~\ref{T:ex_number_target_b5_comb}-\ref{T:ex2_number_target_b10_comb}. This is due to the fact that the integral in \eqref{rq:rub1}, corresponding to 	$r(b,b,1)$,  is independent of $b$.

%
\subsection{Example $Erlang(2,\lambda)-$Combination of exponentials} 
The combination of exponentials is the one that is used in \cite{afonso2013dividend} where 
$$
p(x)=3\, e^{-3\,x/2} -3\,e^{-3\,x}\,, \quad x>0\,.
$$
Like in the previous example we start with the \textbf{Expected Discounted Dividends} problem, but unlike that example we only show figures for $c=0.75$, in Tables~\ref{T:ex_v_comb} and \ref{T:ex_v(u,b)_comb}, since there aren't any other particular aspects that worth to underline. These tables correspond to    
Tables~\ref{T:ex_v_erlang_c0.75} and \ref{T:ex_v(u,b)_erlang_c0.75} in that example, respectively. Corresponding expectations and probabilities are smaller in this example, in most cases.

For the \textbf{Distribution of a Single Dividend Amount}, we show examples with $c=0.75$ and $b=5$. Table~\ref{T:ex_div_b5_comb} shows some figures, graphs of $G(u,5,x)$ as function of $u$ is shown in Figure~\ref{F:G(u,5,x)-comb}, graphs of $g(u,5,x)$ and $\tilde{g}(u,5,x)$ are shown in Figure~\ref{F:g(u,5,x)-densi-comb}. When compared to the corresponding table and graphs of the previous example, they show quite different features.

On the \textbf{Number of gains prior to ruin} dealt in Section~\ref{s:no_ruin} we set $c=1$ and $\lambda=2$.
In Table \ref{T:ex_number_gains_comb} we present the values of the probability of having $m$ gains prior to ruin, $q(u,m)$, for different values of $u$ and $m$ {($m=1,\dots,5\text{; } u=0.2,0.5,0.7,1,3,5,10$)} where $m$ is the number of gains prior to ruin and $u$ the initial surplus.
In Figure \ref{F:q(1,m)-comb} we show a graph of the probability function  $q(u,m)$ with  $u=1$. We can observe that it has a slow decreasing tail, and great probability mass concentrated on $m=1,2$ [{it has a slower decay when compared to Figure~\ref{F:q(1,m)} and those of} \cite[Figures~6-7]{egidio2002many}] . 

For the {\bf Number of gains before reaching a given upper target}, or a dividend level, considering the probability of having exactly $m$ gains to reach an upper target $b$, denoted as $r(u,b,m)$ we set parameter values $\{c=0.75, \lambda=2\}$, and additionally $\{c=0.5, \lambda=2\}$. 
Tables \ref{T:ex_number_target_b5_comb} and \ref{T:ex_number_target_b10_comb} correspond to first parameter set, Tables~\ref{T:ex2_number_target_b5_comb} and 	\ref{T:ex2_number_target_b10_comb} correspond to the second one. There, we present  values of $r(u,b,m)$ for different values of $u$ and $m$, where $m$ is the number of gains and we have set the upper limit, $b=5$ and $b=10$, respectively.

As remarked on the previous example, we can observe that the values from Table~\ref{T:ex_number_target_b5_comb} in the columns corresponding to $u=3$ and $u=5$ and the values in columns corresponding $u=8$ and $u=10$ of Table~\ref{T:ex_number_target_b10_comb} are the same. This is due to the fact that the difference $v=b-u$ is the same, and equal to 2 and 0, respectively. Similar remark is applicable to Tables~\ref{T:ex2_number_target_b5_comb} and 	\ref{T:ex2_number_target_b10_comb}. 

\section*{Acknowledgements}\label{Acknowledgements}
{Authors gratefully acknowledge support from Project CEMAPRE/REM--UIDB/05069/2020 (CEMAPRE/REM-Centre for Applied Mathematics and Economics/Research in Economics and Mathematics) and from Project CMA--UIDB/00297/2020 (Centro de Ma\-te\-má\-ti\-ca e Apli\-ca\-ções) -- financed by FCT/MCTES (Funda\c c\~ao para a Ci\^encia e a Tecnologia/Portuguese Foundation for Science and Technology) through national funds.}

\bibliographystyle{myapalike}
\bibliography{dual_ruin_div}

\bigskip \ \\[1cm]

{\normalsize \noindent {\scriptsize
		\begin{tabular}{lll}
			Renata G.~Alcoforado & {Agnieszka I.~Bergel} & Rui M.R.~Cardoso \\
			& {Department of Management} & {Centro de Matemática e Aplicações (CMA)} \\
			{ISEG and CEMAPRE} & {ISEG and CEMAPRE} & {and Departamento de Matemática} \\
			{Universidade de Lisboa} & {Universidade de Lisboa} & {Faculdade Ciências e Tecnologia}  \\
			{Rua do Quelhas 6} & {Rua do Quelhas 6} & {Universidade Nova de Lisboa} \\
			{1200-781 Lisboa} & {1200-781 Lisboa} & {2829-516 Caparica} \\
			Portugal & {Portugal} & {Portugal}  \\
			orcid.org/0000-0002-7495-8057 &  orcid.org/0000-0002-5153-389X &  orcid.org/0000-0002-0715-7954\\
			alcoforado.renata@gmail.com & {agnieszka@iseg.ulisboa.pt} & rrc@fct.unl.pt \\
			&  &\\
			& & \\
			{Alfredo D.~Egí­dio dos Reis} & {Eugenio V.~Rodriguez-Martinez} & \\
			{Department of Management} & & \\
			{ISEG and CEMAPRE}&  {Fidelidade Seguros \& CEMAPRE}& \\
			{Universidade de Lisboa}& {ISEG, Universidade de Lisboa} & \\
			{Rua do Quelhas 6} & {Rua do Quelhas 6} & \\
			{1200-781 Lisboa}  & {1200-781 Lisboa} & \\
			{Portugal}& {Portugal}  & \\
		    orcid.org/0000-0003-2533-1343	&  orcid.org/0000-0001-7130-3242\\
			{alfredo@iseg.ulisboa.pt}& evrodriguez@gmail.com & \\
		\end{tabular}%
}}

\newpage
\begin{table}
	\centering
	\begin{tabular}{ccccc} \hline \hline \\[-3.5mm]
		\textbf{b}  & \multicolumn{1}{c}{\textbf{$\mathbb{E}\left( e^{-0.02 \tau_{b}}\right)$}} & \multicolumn{1}{c}{\textbf{$\mathbb{E}\left( e^{-0.02
					\tau_{b}}D_{b}\right)$}} & \multicolumn{1}{c}{$V(b,b,0.02)$} & \multicolumn{1}{c}{$\chi(b,b)$} \\ \hline
		\textbf{1}  & 0.66445 & 1.18567 & 3.53353     & 0.67466 \\
		\textbf{2}  & 0.90939 & 1.51352 & 16.70411     & 0.93280 \\
		\textbf{3}  & 0.95824 & 1.57111 & 37.62031     & 0.98707 \\
		\textbf{4}  & 0.96715 & 1.58134 & 48.13442     & 0.99755  \\
		\textbf{5}  & 0.96874 & 1.58316 & 50.64987     & 0.99954 \\
		\textbf{7}  & 0.96908 & 1.58354 & 51.21171     & 0.99998 \\
		\textbf{9}  & 0.96909 & 1.58355 & 51.22976     & 1.00000 \\
		\textbf{10} & 0.96909 & 1.58355 & 51.23025     & 1.00000          \\ \hline \hline              
	\end{tabular}
	\caption{$\mathbb{E}\left( e^{-0.02 \tau_{b}}\right)$, $\mathbb{E}\left( e^{-0.02
			\tau_{b}}D_{b}\right)$, $V(b,b,0.02)$, $\chi(b,b)$; $\lambda = 2$, $\beta = 1$, $c=0.75$}
	\label{T:ex_v_erlang_c0.75}
\end{table}
\begin{table}
	\centering
	\begin{tabular}{ccccc} \hline \hline \\[-3.5mm]
		\textbf{b}  & \multicolumn{1}{c}{\textbf{$\mathbb{E}\left( e^{-0.02 \tau_{b}}\right)$}} & \multicolumn{1}{c}{\textbf{$\mathbb{E}\left( e^{-0.02
					\tau_{b}}D_{b}\right)$}} & \multicolumn{1}{c}{$V(b,b,0.02)$} & \multicolumn{1}{c}{$\chi(b,b)$} \\ \hline
		\textbf{1}  & 0.50218                        & 0.91481                          & 1.83765                        & 0.50822                         \\
		\textbf{2}  & 0.79762                        & 1.32791                           & 6.56131                        & 0.81577                         \\
		\textbf{3}  & 0.90491                         & 1.45485                           & 15.29962                        & 0.93251                         \\
		\textbf{4}  & 0.94222                        & 1.49649                           & 25.89859                        & 0.97527                         \\
		\textbf{5}  & 0.95513                        & 1.51064                           & 33.66764                        & 0.99092                         \\
		\textbf{7}  & 0.96116                        & 1.51719                           & 39.05850                        & 0.99877                         \\
		\textbf{9}  & 0.96188                        & 1.51798                           & 39.82178                        & 0.99983                         \\
		\textbf{10} & 0.96194                        & 1.51805                           & 39.89141                        & 0.99994          \\ \hline \hline              
	\end{tabular}
	\caption{$\mathbb{E}\left( e^{-0.02 \tau_{b}}\right)$, $\mathbb{E}\left( e^{-0.02
			\tau_{b}}D_{b}\right)$, $V(b,b,0.02)$, $\chi(b,b)$; $\lambda = 2$, $\beta = 1$, $c=1$}
	\label{T:ex_v_erlang_c1}
\end{table}
\begin{table}
	\centering
	\begin{tabular}{ccccc} \hline \hline \\[-3.5mm]
		\textbf{b}  & \multicolumn{1}{c}{\textbf{$\mathbb{E}\left( e^{-0.02 \tau_{b}}\right)$}} & \multicolumn{1}{c}{\textbf{$\mathbb{E}\left( e^{-0.02
					\tau_{b}}D_{b}\right)$}} & \multicolumn{1}{c}{$V(b,b,0.02)$} & \multicolumn{1}{c}{$\chi(b,b)$} \\ \hline
		\textbf{1}  & 0.18569                        & 0.35710                          & 0.43853                       & 0.18677                         \\
		\textbf{2}  & 0.37520                        & 0.67356                          & 1.07805                        & 0.37956                         \\
		\textbf{3}  & 0.49632                        & 0.84771                          & 1.68302                        & 0.50504                         \\
		\textbf{4}  & 0.57398                        & 0.94912                          & 2.22789                        & 0.58756                         \\
		\textbf{5}  & 0.62610                        & 1.01327                           & 2.71002                        & 0.64473                         \\
		\textbf{7}  & 0.68915                        & 1.08716                           & 3.49733                        & 0.71788                         \\
		\textbf{9}  & 0.72393                        & 1.12633                           & 4.07993                        & 0.76240                         \\
		\textbf{10} & 0.73554                         & 1.13917                           & 4.30751                        & 0.77865          \\ \hline \hline              
	\end{tabular}
	\caption{$\mathbb{E}\left( e^{-0.02 \tau_{b}}\right)$, $\mathbb{E}\left( e^{-0.02
			\tau_{b}}D_{b}\right)$, $V(b,b,0.02)$, $\chi(b,b)$; $\lambda = 2$, $\beta = 1$, $c=2.1$}
	\label{T:ex_v_erlang_c2.1}
\end{table}
\begin{landscape}
	\begin{table}[]
	\centering
	\begin{tabular}{ccccccc}  \hline \hline \\[-3.5mm]
		$(u,b)$ & $\mathbb{E}\left( e^{-0.02
			\tau_{u}}D_{u}\right)$  & $\mathbb{E}\left( e^{-0.02 \tau_{u}}\right)$ (2)   & $V(b,b,0.02)$ (3) & (2) $\times$ (3)     & $V(u,b,0.02)$  & $\chi(u,b)$ \\  \hline 
		$(1,2)$ & 0.94704                    & 0.63804                  & 16.70411                         & 10.65792                 & 11.60496                     & 0.65587                     \\
		$(1,9)$ & 0.81080                    & 0.57189                  & 51.22976                         & 29.29770                 & 30.10850                    & 0.65537                     \\
		$(3,6)$ & 1.27723                     & 0.91397                  & 51.12592                         & 46.72777                 & 48.00500                     & 0.98639                      \\
		$(5,9)$ & 1.27042                     & 0.91014                  & 51.22976                         & 46.62648                 & 47.89690                    & 0.99951                    
		 \\  \hline \hline
	\end{tabular}
	\caption{$\mathbb{E}\left( e^{-0.02
			\tau_{u}}D_{u}\right)$, $\mathbb{E}\left( e^{-0.02 \tau_{u}}\right)$, $V(b,b,0.02)$, $V(u,b,0.02)$, $\chi(u,b)$; $\lambda = 2$, $\beta = 1$, $c=0.75$}
	\label{T:ex_v(u,b)_erlang_c0.75}
\end{table}
\begin{table}[]
	\centering
	\begin{tabular}{ccccccc}  \hline \hline \\[-3.5mm]
		$(u,b)$ & $\mathbb{E}\left( e^{-0.02
			\tau_{u}}D_{u}\right)$  & $\mathbb{E}\left( e^{-0.02 \tau_{u}}\right)$ (2)   & $V(b,b,0.02)$ (3) & (2) $\times$ (3)     & $V(u,b,0.02)$  & $\chi(u,b)$ \\  \hline 
		$(1,2)$ & 0.68765 & 0.45254 & 6.56131     & 2.96927 & 3.65691 & 0.46254 \\
		$(1,9)$ & 0.52816 & 0.38404 & 39.82178     & 15.29300  & 15.82116 & 0.44598 \\
		$(3,6)$ & 1.16429  & 0.84462 & 37.51689     & 31.68760 & 32.85189 & 0.91918 \\
		$(5,9)$ & 1.21507  & 0.88320 & 39.82178     & 35.17081 & 36.38588 & 0.98898 \\  \hline \hline
	\end{tabular}
	\caption{$\mathbb{E}\left( e^{-0.02
			\tau_{u}}D_{u}\right)$, $\mathbb{E}\left( e^{-0.02 \tau_{u}}\right)$, $V(b,b,0.02)$, $V(u,b,0.02)$, $\chi(u,b)$; $\lambda = 2$, $\beta = 1$, $c=1$}
\label{T:ex_v(u,b)_erlang_c1}
\end{table}
\begin{table}[]
	\centering
	\begin{tabular}{ccccccc}  \hline \hline \\[-3.5mm]
		$(u,b)$ & $\mathbb{E}\left( e^{-0.02
			\tau_{u}}D_{u}\right)$  & $\mathbb{E}\left( e^{-0.02 \tau_{u}}\right)$ (2)   & $V(b,b,0.02)$ (3) & (2) $\times$ (3)     & $V(u,b,0.02)$  & $\chi(u,b)$ \\  \hline 
		$(1,2)$ & 0.22002                    & 0.12751                  & 1.07805                         & 0.13747                & 0.35748                   & 0.12863                     \\
		$(1,9)$ & 0.04402                   & 0.03084                 & 4.07993                         & 0.12582                & 0.16984                   & 0.03465                    \\
		$(3,6)$ & 0.38458                    & 0.26199                  & 3.13178                         & 0.82050                & 1.20507                    & 0.27667                     \\
		$(5,9)$ & 0.45775                     & 0.32041                  & 4.07993                         & 1.30723                 & 1.76498                    & 0.35670                    
		 \\  \hline \hline
	\end{tabular} 
	\caption{$\mathbb{E}\left( e^{-0.02
			\tau_{u}}D_{u}\right)$, $\mathbb{E}\left( e^{-0.02 \tau_{u}}\right)$, $V(b,b,0.02)$, $V(u,b,0.02)$, $\chi(u,b)$; $\lambda = 2$, $\beta = 1$, $c=2.1$}
	\label{T:ex_v(u,b)_erlang_c2.1}
\end{table}
\end{landscape}
%
\begin{table}[h]
	\centering
	\begin{tabular}{clllll} \hline \hline
		\textbf{u\textbackslash{}x} & \multicolumn{1}{c}{\textbf{1}} & \multicolumn{1}{c}{\textbf{2}} & \multicolumn{1}{c}{\textbf{3}} & \multicolumn{1}{c}{\textbf{4}} & \multicolumn{1}{c}{\textbf{5}} \\ \hline
		\textbf{1}                  & 0.185349                       & 0.31486                        & 0.385064                       & 0.41919                        & 0.41919                        \\
		\textbf{2}                  & 0.324635                       & 0.551758                       & 0.674932                       & 0.73482                        & 0.762214                       \\
		\textbf{3}                  & 0.379865                       & 0.647155                       & 0.792408                       & 0.863104                       & 0.895462                       \\
		\textbf{4}                  & 0.393353                       & 0.677656                       & 0.8336                         & 0.909861                       & 0.944866                       \\
		\textbf{5}                  & 0.363765                       & 0.663592                       & 0.834962                       & 0.920472                       & 0.960195           \\ \hline \hline           
	\end{tabular}
	\caption{Distribution $G(u,5,x)$, $b=5$; $\lambda=2$, $\beta=1$, $c=1$}
	\label{T:ex_div_b5}
\end{table}
\begin{table}[h]
	\centering
	\begin{tabular}{clllllll} \hline \hline 
		\textbf{u\textbackslash{}x} & \multicolumn{1}{c}{\textbf{1}} & \multicolumn{1}{c}{\textbf{2}} & \multicolumn{1}{c}{\textbf{3}} & \multicolumn{1}{c}{\textbf{5}} & \multicolumn{1}{c}{\textbf{7}} & \multicolumn{1}{c}{\textbf{9}} & \multicolumn{1}{c}{\textbf{10}} \\ \hline
		\textbf{1}                  & 0.185125                       & 0.314405                       & 0.384471                       & 0.4341                         & 0.443882                       & 0.445621                       & 0.445828                        \\
		\textbf{2}                  & 0.32454                        & 0.55118                        & 0.674011                       & 0.761016                       & 0.778164                       & 0.781213                       & 0.781576                        \\
		\textbf{3}                  & 0.381331                       & 0.647632                       & 0.791958                       & 0.894188                       & 0.914338                       & 0.917919                       & 0.918347                        \\
		\textbf{4}                  & 0.402645                       & 0.683834                       & 0.83623                        & 0.944175                       & 0.965452                       & 0.969233                       & 0.969685                        \\
		\textbf{5}                  & 0.410505                       & 0.697198                       & 0.852579                       & 0.96264                        & 0.984334                       & 0.98819                        & 0.98865                         \\
		\textbf{6}                  & 0.413332                       & 0.702069                       & 0.858571                       & 0.969433                       & 0.991285                       & 0.995169                       & 0.995633                        \\
		\textbf{7}                  & 0.414052                       & 0.703625                       & 0.860646                       & 0.971903                       & 0.993837                       & 0.997736                       & 0.998202                        \\
		\textbf{8}                  & 0.412789                       & 0.703074                       & 0.860789                       & 0.972672                       & 0.99475                        & 0.998676                       & 0.999145                        \\
		\textbf{9}                  & 0.405047                       & 0.697517                       & 0.857887                       & 0.972288                       & 0.994959                       & 0.999                          & 0.999484                        \\
		\textbf{10}                 & 0.367508                       & 0.669949                       & 0.842736                       & 0.968972                       & 0.994436                       & 0.999017                       & 0.999567                      \\ \hline \hline 
	\end{tabular}
	\caption{Distribution $G(u,5,x)$, $b=10$; ; $\lambda=2$, $\beta=1$, $c=1$}
	\label{T:ex_div_b10}
\end{table}
%
\begin{figure}[h]
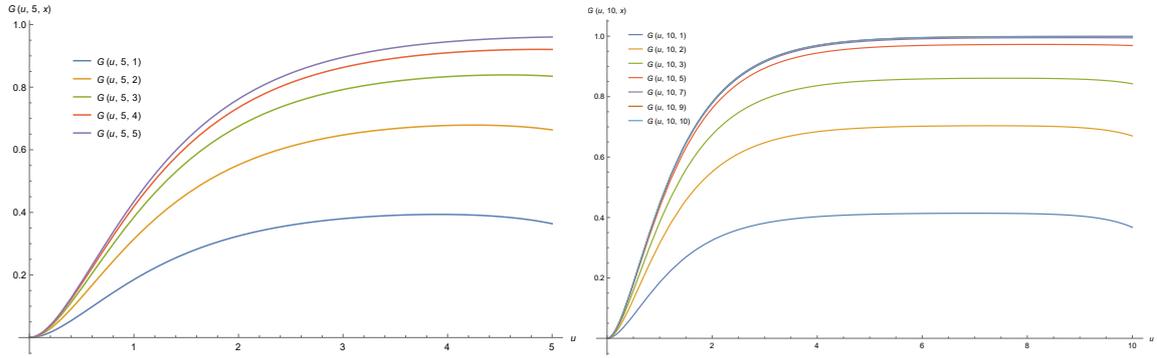

	\centering
	\includegraphics[width=7.5cm]{F11.pdf}
	\includegraphics[width=7.5cm]{F12.pdf}
	\caption{$G(u,b,x)$ as function of $u$, $\lambda=2$, $\beta=1$, $c=1$, $\{x=1,\dots,5;\, b=5\}$; $\{x=1,\dots,10;\,b=10\}$}
	\label{F:G(u,5,x)}
\end{figure}
\begin{figure}[h]
	\centering
	\centering
	\includegraphics[width=.475\linewidth, height=150px]{F21.pdf}
	\includegraphics[width=.475\linewidth, height=150px]{F22.pdf}
	\caption{Densities $g(u,5,x)$ and $\tilde{g}(u,5,x)$, $u=1,2,\dots,5$, $b=5$; $\lambda=2$, $\beta=1$, $c=1$}
	\label{F:g(u,5,x)-densi}
\end{figure}

\begin{figure}[h]
	\centering
	\centering
	\includegraphics[width=.475\linewidth, height=150px]{F31.pdf}
	\includegraphics[width=.475\linewidth, height=150px]{F32.pdf}
	\caption{Densities $g(u,5,x)$ and $\tilde{g}(u,5,x)$, $u=1,2,\dots,5$, $b=5$; $\lambda=2$, $\beta=1$, $c=2.1$}
	\label{F:g(u,5,x)-densi-c2.1}
\end{figure}

\clearpage
\begin{table}[h]
	\centering
	\begin{tabular}{lllllllll}
		\hline \hline
		\textbf{$m$\textbackslash{}$u$} & \textbf{0} & \textbf{0.2} & \textbf{0.5} & \textbf{0.7} & \textbf{1} & \textbf{3} & \textbf{5}  & \textbf{10}                    \\ \hline
		\textbf{0}                  & 1          & 0.938448     & 0.735759     & 0.591833     & 0.406006   & 0.017351  & 0.0004994 & $4.33\times10^{-8}$ \\
		\textbf{1}                  & 0          & 0.014697    & 0.054501    & 0.075185    & 0.090224  & 0.021482  & 0.0014293  & $4.12\times10^{-7}$ \\
		\textbf{2}                  & 0          & 0.003270   & 0.013852    & 0.020929    & 0.028787  & 0.015305   & 0.0018590  & $1.47\times10^{-6}$ \\
		\textbf{3}                  & 0          & 0.001317   & 0.005707   & 0.008798   & 0.012558  & 0.009655 & 0.0017568  & $3.16\times10^{-6}$ \\
		\textbf{4}                  & 0          & 0.000655  & 0.002866   & 0.004454   & 0.006455 & 0.006024 & 0.0014366  & $4.97\times10^{-6}$ \\
		\textbf{5}                  & 0          & 0.000364  & 0.001600   & 0.002496   & 0.003648 & 0.003820 & 0.0010973  & $6.39\times10^{-6}$
		\\ \hline \hline
	\end{tabular}
	\caption{Probability of having $m$ gains prior to ruin, $q(u, m)$ }
	\label{T:ex_number_gains}
\end{table}
%
%
%
\begin{figure}[h]
	\centering
	\includegraphics[width=12cm]{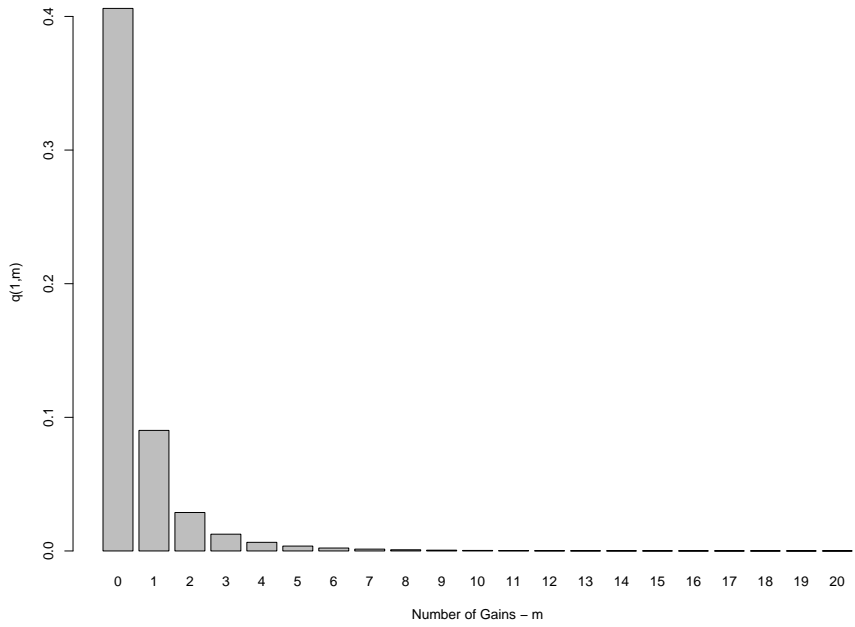}
	\caption{Probability of having $m$ gains prior to ruin, $q(1, m)$}
	\label{F:q(1,m)}
\end{figure}

\clearpage
%
\begin{landscape}
	\begin{table}
	\centering
	\begin{tabular}{llllllllll}
		\hline \hline
		\multicolumn{1}{c}{\textbf{m\textbackslash{}u}} & \multicolumn{1}{c}{\textbf{0}} & \multicolumn{1}{c}{\textbf{0.2}} & \multicolumn{1}{c}{\textbf{0.5}} & \multicolumn{1}{c}{\textbf{0.7}} & \multicolumn{1}{c}{\textbf{1}} & \multicolumn{1}{c}{\textbf{2}} & \multicolumn{1}{c}{\textbf{3}} & \multicolumn{1}{c}{\textbf{4}} & \multicolumn{1}{c}{\textbf{5}}\\ \hline
		\textbf{1}                                      & 0.01996                              & 0.02365                        & 0.03045                        & 0.03598                        & 0.04613                      & 0.10326                       & 0.22055                       & 0.43600                & 0.74074       \\
		\textbf{2}                                      & 0.08858                              & 0.09870                        & 0.11543                         & 0.12760                         & 0.14729                       & 0.22093                        & 0.28195                       & 0.26824              & 0.13900         \\
		\textbf{$\geq$3}                       & 0.89146                              & 0.87765                        & 0.85412                        & 0.83642                        & 0.80658                      & 0.67581                       & 0.49750                       & 0.29576               &      0.12026   \\ \hline \hline
	\end{tabular}
	\caption{Probability of having $m$ gains prior to reach an upper target $b=5$, $r(u,5,m)$}
	\label{T:ex_number_target_b5}
\end{table}
\begin{table}
	\centering
	\begin{tabular}{llllllllll}
		\hline \hline
		\multicolumn{1}{c}{\textbf{m\textbackslash{}u}} & \multicolumn{1}{c}{\textbf{0}} & \multicolumn{1}{c}{\textbf{0.2}} & \multicolumn{1}{c}{\textbf{0.5}} & \multicolumn{1}{c}{\textbf{0.7}} & \multicolumn{1}{c}{\textbf{1}} & \multicolumn{1}{c}{\textbf{3}} & \multicolumn{1}{c}{\textbf{5}} & \multicolumn{1}{c}{\textbf{8}} & \multicolumn{1}{c}{\textbf{10}}\\ \hline
		\textbf{1}                                      & 0.00024                              & 0.00028                      & 0.00037                      & 0.00044                      & 0.00058                    & 0.00351                     & 0.01996                      & 0.22055             &       0.74074   \\
		\textbf{2}                                      & 0.00317                              & 0.00368                       & 0.00459                       & 0.00532                      & 0.00661                     & 0.02631                      & 0.08858                      & 0.28195              &0.13900         \\
		\textbf{$\geq$3}                       & 0.99659                              & 0.99603                      & 0.99503                      & 0.99424                      & 0.99280                    & 0.97018                    & 0.89146                      & 0.49750             &0.12026         \\ \hline \hline
	\end{tabular}
	\caption{Probability of having $m$ gains prior to reach an upper target $b=10$, $r(u,10,m)$}
	\label{T:ex_number_target_b10}
\end{table}
\begin{table}
		\centering
	\begin{tabular}{clllllllll}
		\hline \hline
		\textbf{m\textbackslash{}u} & \multicolumn{1}{c}{\textbf{0}} & \multicolumn{1}{c}{\textbf{0.2}} & \multicolumn{1}{c}{\textbf{0.5}} & \multicolumn{1}{c}{\textbf{0.7}} & \multicolumn{1}{c}{\textbf{1}} & \multicolumn{1}{c}{\textbf{2}} & \multicolumn{1}{c}{\textbf{3}} & \multicolumn{1}{c}{\textbf{4}} & \multicolumn{1}{c}{\textbf{5}} \\ \hline
		\textbf{1}                  & 0.01996                        & 0.02365                        & 0.03045                        & 0.03598                        & 0.0461                         & 0.10326                        & 0.22055                        & 0.43601                        & 0.74074                        \\
		\textbf{2}                  & 0.08858                        & 0.09870                        & 0.11543                         & 0.12760                         & 0.14729                        & 0.22093                        & 0.28195                        & 0.26825                        & 0.14268                              \\
		\textbf{$\geq$3}   & 0.89146                        & 0.87765                        & 0.85412                        & 0.83642                        & 0.80661                        & 0.67581                        & 0.4975                         & 0.29574                        & 0.11658      \\ \hline \hline                 
	\end{tabular}
	\caption{Probability of having $m$ gains prior to reach an upper target $b=5$, $r(u,5,m)$}
	\label{T:ex2_number_target_b5}
\end{table}
\begin{table}
		\centering
	\begin{tabular}{clllllllll}
		\hline \hline
		\textbf{m\textbackslash{}u} & \multicolumn{1}{c}{\textbf{0}} & \multicolumn{1}{c}{\textbf{0.2}} & \multicolumn{1}{c}{\textbf{0.7}} & \multicolumn{1}{c}{\textbf{1}} & \multicolumn{1}{c}{\textbf{3}} & \multicolumn{1}{c}{\textbf{5}} & \multicolumn{1}{c}{\textbf{8}} & \multicolumn{1}{c}{\textbf{9}} & \multicolumn{1}{c}{\textbf{10}} \\ \hline
		\textbf{1}                  & 0.00024                     & 0.00028                      & 0.00044                      & 0.00058                     & 0.00351                      & 0.01996                       & 0.22055                        & 0.43601                        & 0.74074                         \\
		\textbf{2}                  & 0.00317                      & 0.00368                       & 0.00532                       & 0.00661                      & 0.02631                       & 0.08858                       & 0.28195                        & 0.26825                        & 0.14268                               \\
		\textbf{$\geq$3}   & 0.99659                     & 0.99604                      & 0.99424                      & 0.99281                     & 0.97018                      & 0.89146                        & 0.4975                         & 0.29574                        & 0.11658      \\ \hline \hline                 
	\end{tabular}
	\caption{Probability of having $m$ gains prior to reach an upper target $b=10$, $r(u,10,m)$}
	\label{T:ex2_number_target_b10}
\end{table}
\end{landscape}

\clearpage
\begin{landscape}
\begin{table}
	\centering
	\begin{tabular}{ccccc} \hline \hline \\[-3.5mm]
		\textbf{b}  & \multicolumn{1}{c}{\textbf{$\mathbb{E}\left( e^{-0.02 \tau_{b}}\right)$}} & \multicolumn{1}{c}{\textbf{$\mathbb{E}\left( e^{-0.02
					\tau_{b}}D_{b}\right)$}} & \multicolumn{1}{c}{$V(b,b,0.02)$} & \multicolumn{1}{c}{$\chi(b,b)$} \\ \hline
		\textbf{1}  & 0.62876                        & 0.49203                          & 1.32539                        & 0.65271                         \\
		\textbf{2}  & 0.81787                        & 0.64454                          & 3.53884                        & 0.85508                          \\
		\textbf{3}  & 0.88307                        & 0.69335                          & 5.92938                        & 0.93135                         \\
		\textbf{4}  & 0.90917                        & 0.71233                          & 7.84241                        & 0.96564                         \\
		\textbf{5}  & 0.92029                        & 0.72032                          & 9.03706                        & 0.98235                          \\
		\textbf{7}  & 0.92731                        & 0.72534                          & 9.97898                        & 0.99519                         \\
		\textbf{9}  & 0.92870                        & 0.72629                          & 10.18618                        & 0.99867                         \\
		\textbf{10} & 0.92889                        & 0.72570                          & 10.20519                        & 0.99930          \\ \hline \hline              
	\end{tabular}
	\caption{$\mathbb{E}\left( e^{-0.02 \tau_{b}}\right)$, $\mathbb{E}\left( e^{-0.02
			\tau_{b}}D_{b}\right)$, $V(b,b,0.02)$, $\chi(b,b)$, Combination of Exponentials; $\lambda = 2$, $c=0.75$}
	\label{T:ex_v_comb}
\end{table}
\begin{table}[]
	\centering
	\begin{tabular}{ccccccc}  \hline \hline \\[-3.5mm]
		$(u,b)$ & $\mathbb{E}\left( e^{-0.02
			\tau_{u}}D_{u}\right)$  & $\mathbb{E}\left( e^{-0.02 \tau_{u}}\right)$ (2)   & $V(b,b,0.02)$ (3) & (2) $\times$ (3)     & $V(u,b,0.02)$  & $\chi(u,b)$ \\  \hline 
		$(1,2)$ & 0.32788                    & 0.45990                    & 3.82068                         & 1.75713                 & 2.08501                    & 0.47518                     \\
		$(1,9)$ & 0.13838                    & 0.23844                  & 13.60174                         & 3.24321                 & 3.38159                    & 0.36637                     \\
		$(3,6)$ & 0.48838                     & 0.68862                  & 12.70810                         & 8.75111                 & 9.23949                    & 0.83678                     \\
		$(5,9)$ & 0.49763                    & 0.70435                  & 13.60174                         & 9.58044                 & 10.07806                    & 0.95289                 \\ \hline \hline
	\end{tabular}
	\caption{$\mathbb{E}\left( e^{-0.02
			\tau_{u}}D_{u}\right)$, $\mathbb{E}\left( e^{-0.02 \tau_{u}}\right)$, $V(b,b,0.02)$, $V(u,b,0.02)$, $\chi(u,b)$, Combination of Exponentials; $\lambda = 2$, $c=0.75$}
	\label{T:ex_v(u,b)_comb}
\end{table}
\end{landscape}
\clearpage
\begin{table}
	\centering
	\begin{tabular}{clllll} \hline \hline
		\textbf{u\textbackslash{}x} & \multicolumn{1}{c}{\textbf{1}} & \multicolumn{1}{c}{\textbf{2}} & \multicolumn{1}{c}{\textbf{3}} & \multicolumn{1}{c}{\textbf{4}} & \multicolumn{1}{c}{\textbf{5}} \\ \hline
		\textbf{1}                  & 0.171138                       & 0.325939                       & 0.366285                       & 0.375577                       & 0.377664                       \\
		\textbf{2}                  & 0.310213                       & 0.590809                       & 0.663943                       & 0.680785                       & 0.684569                       \\
		\textbf{3}                  & 0.384908                       & 0.733022                       & 0.823752                       & 0.844647                       & 0.849342                       \\
		\textbf{4}                  & 0.425217                       & 0.808336                       & 0.908172                       & 0.931163                       & 0.936328                       \\
		\textbf{5}                  & 0.473438                       & 0.854693                       & 0.953485                       & 0.976211                       & 0.981316      \\ \hline \hline                
	\end{tabular}
	\caption{Distribution $G(u,5,x)$, $b=5$, $c=0.75$}
	\label{T:ex_div_b5_comb}
\end{table}
\begin{figure}
	\centering
	\includegraphics[width=7.5cm]{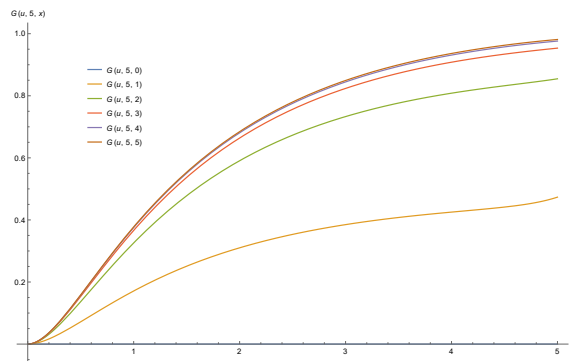}
	\caption{$G(u,5,x)$ as function of $u$, $x=0,1,2,3,4,5$, $c=0.75$ }
	\label{F:G(u,5,x)-comb}
\end{figure}
\begin{figure}
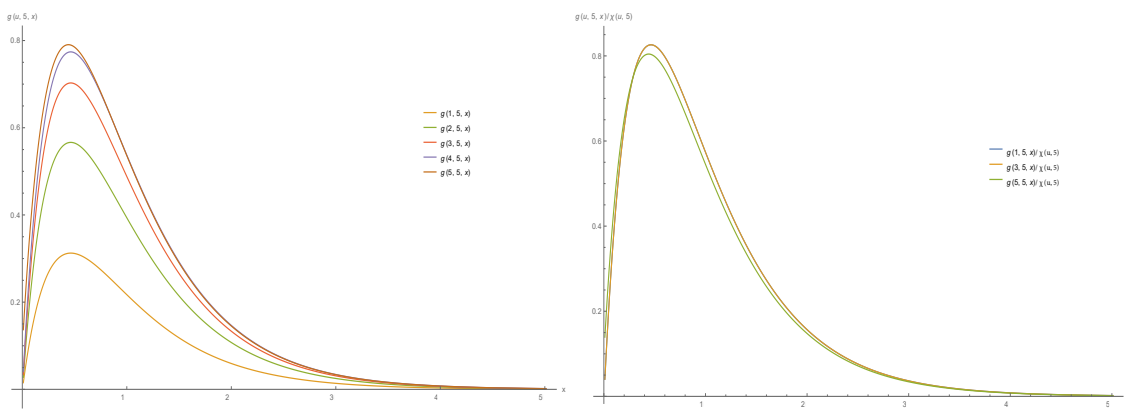

	\centering
	\centering
	\includegraphics[width=.475\linewidth, height=150px]{F61.pdf}
	\includegraphics[width=.475\linewidth, height=150px]{F62.pdf}
	\caption{Densities $g(u,5,x)$ and $\tilde{g}(u,5,x)$, $u=1,2,\dots,5$, $c=0.75$}
	\label{F:g(u,5,x)-densi-comb}
\end{figure}	
\clearpage
\ren{
\begin{table}[h]
	\centering
	\begin{tabular}{lllllllll}
		\hline \hline
		\textbf{$m$\textbackslash{}$u$} & \textbf{0} & \textbf{0.2} & \textbf{0.5} & \textbf{0.7} & \textbf{1} & \textbf{3} & \textbf{5}  & \textbf{10}                    \\ \hline
		\textbf{0}                  & 1 & 0.938448   & 0.735759   & 0.591833  & 0.406006  & 0.017351 & 0.000499 & $4.33\times10^{-8}$ \\
		\textbf{1}                  & 0 & 0.029024  & 0.109012   & 0.151509  & 0.183614  & 0.045564 & 0.003096  & $9.16\times10^{-7}$ \\
		\textbf{2}                  & 0 & 0.008061 & 0.036744  & 0.058105 & 0.085016 & 0.057740 & 0.007781   & $6.84\times10^{-6}$ \\
		\textbf{3}                  & 0 & 0.004065 & 0.019192  & 0.031352 & 0.048728 & 0.056886 & 0.012810     & 0.0000288                   \\
		\textbf{4}                  & 0 & 0.002546 & 0.012179  & 0.020153 & 0.032137 & 0.051174 & 0.016812   & 0.0000840                   \\
		\textbf{5}                  & 0 & 0.001784  & 0.008590 & 0.014309 & 0.023133 & 0.044676 & 0.019397   & 0.0001904 
		\\ \hline \hline
	\end{tabular}
	\caption{Probability of having $m$ gains prior to ruin, $q(u, m)$, Combination of Exponentials $\lambda=2, c=1$ }
	\label{T:ex_number_gains_comb}
\end{table}
\begin{figure}[h]
	\centering
	\includegraphics[width=12cm]{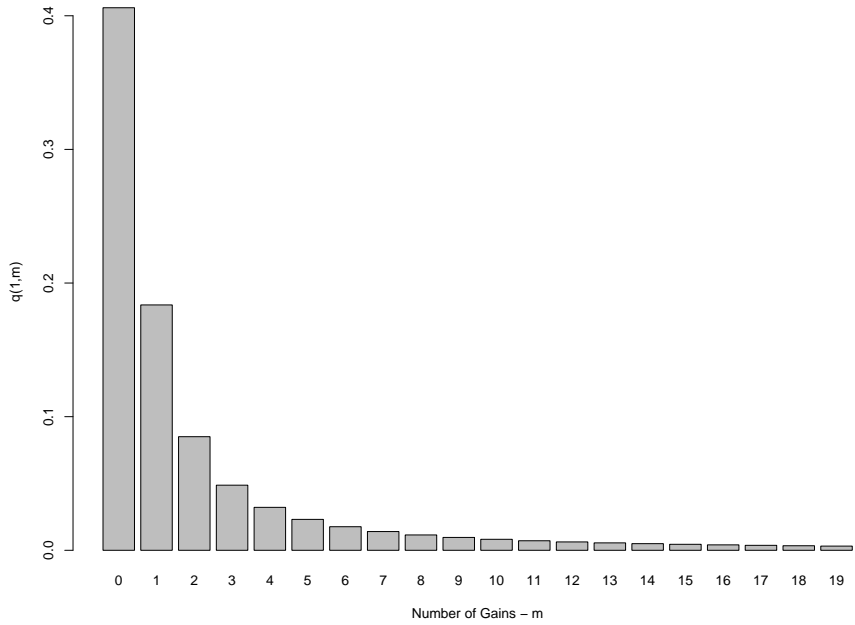}
	\caption{Probability of having $m$ gains prior to ruin, $q(1, m)$ - Combination of Exponentials}
	\label{F:q(1,m)-comb}
\end{figure}
}
\clearpage
%
\begin{landscape}
	\begin{table}
		\centering
		\begin{tabular}{llllllllll}
			\hline \hline
			\multicolumn{1}{c}{\textbf{m\textbackslash{}u}} & \multicolumn{1}{c}{\textbf{0}} & \multicolumn{1}{c}{\textbf{0.2}} & \multicolumn{1}{c}{\textbf{0.5}} & \multicolumn{1}{c}{\textbf{0.7}} & \multicolumn{1}{c}{\textbf{1}} & \multicolumn{1}{c}{\textbf{2}} & \multicolumn{1}{c}{\textbf{3}} & \multicolumn{1}{c}{\textbf{4}} & \multicolumn{1}{c}{\textbf{5}}\\ \hline
			\textbf{1}                  & 0.00045    & 0.00061      & 0.00096      & 0.00129      & 0.00203    & 0.00907    & 0.04024    & 0.17176    & 0.59775    \\
			\textbf{2}                  & 0.00319    & 0.00411      & 0.00598      & 0.00765      & 0.01101    & 0.03472    & 0.09187    & 0.15269    & 0.14352    \\
			\textbf{$\geq$3}   & 0.99636    & 0.99528      & 0.99307      & 0.99106      & 0.98696    & 0.95620    & 0.86789    & 0.67555    & 0.25873   \\ \hline \hline
		\end{tabular}
		\caption{Probability of having $m$ gains prior to reach an upper target $b=5$, $r(u,5,m)$, Combination of Exponentials, $\lambda=2$ and $c=0.75$}
		\label{T:ex_number_target_b5_comb}
	\end{table}
	\begin{table}
		\centering
		\begin{tabular}{llllllllll}
			\hline \hline
			\multicolumn{1}{c}{\textbf{m\textbackslash{}u}} & \multicolumn{1}{c}{\textbf{0}} & \multicolumn{1}{c}{\textbf{0.2}} & \multicolumn{1}{c}{\textbf{0.5}} & \multicolumn{1}{c}{\textbf{0.7}} & \multicolumn{1}{c}{\textbf{1}} & \multicolumn{1}{c}{\textbf{3}} & \multicolumn{1}{c}{\textbf{5}} & \multicolumn{1}{c}{\textbf{8}} & \multicolumn{1}{c}{\textbf{10}}\\ \hline
			\textbf{1}                  & 2.51$\times10^{-7}$                       & 3.38$\times10^{-7}$                         & 7.16$\times10^{-7}$                         & 1.12$\times10^{-6}$                         & 2.26$\times10^{-5}$                       & 0.00045                        & 0.04024                        & 0.17176                        & 0.59775                        \\
			\textbf{2}                  & 3.80$\times10^{-6}$                       & 5.01$\times10^{-6}$                         & 1.00$\times10^{-5}$                         & 1.52$\times10^{-5}$                         & 0.00023                        & 0.00319                        & 0.09187                        & 0.15269                        & 0.14352                        \\
			\textbf{$\geq$3}   & 0.999996                       & 0.99999                          & 0.99999                          & 0.99998                          & 0.99975                        & 0.99636                        & 0.86789                        & 0.67555                        & 0.25873                       
			         \\ \hline \hline
		\end{tabular}
		\caption{Probability of having $m$ gains prior to reach an upper target $b=10$, $r(u,10,m)$, Combination of Exponentials, $\lambda=2$ and $c=0.75$}
		\label{T:ex_number_target_b10_comb}
	\end{table}
	\begin{table}
		\centering
		\begin{tabular}{clllllllll}
			\hline \hline
			\textbf{m\textbackslash{}u} & \multicolumn{1}{c}{\textbf{0}} & \multicolumn{1}{c}{\textbf{0.2}} & \multicolumn{1}{c}{\textbf{0.5}} & \multicolumn{1}{c}{\textbf{0.7}} & \multicolumn{1}{c}{\textbf{1}} & \multicolumn{1}{c}{\textbf{2}} & \multicolumn{1}{c}{\textbf{3}} & \multicolumn{1}{c}{\textbf{4}} & \multicolumn{1}{c}{\textbf{5}} \\ \hline
			\textbf{1}                  & 0.00058                        & 0.00079                          & 0.00124                          & 0.00167                          & 0.00262                        & 0.01171                        & 0.05186                        & 0.21978                        & 0.59775                        \\
			\textbf{2}                  & 0.00479                        & 0.00617                          & 0.00899                          & 0.01151                          & 0.01660                        & 0.05290                        & 0.14352                        & 0.26768                        & 0.17958                        \\
			\textbf{$\geq$3}   & 0.99463                        & 0.99304                          & 0.98978                          & 0.98682                          & 0.98078                        & 0.93539                        & 0.80462                        & 0.51254                        & 0.22268                       
			      \\ \hline \hline                 
		\end{tabular}
		\caption{Probability of having $m$ gains prior to reach an upper target $b=5$, $r(u,5,m)$, Combination of Exponentials, $\lambda=2$ and $c=0.5$}
		\label{T:ex2_number_target_b5_comb}
	\end{table}
	\begin{table}
		\centering
		\begin{tabular}{clllllllll}
			\hline \hline
			\textbf{m\textbackslash{}u} & \multicolumn{1}{c}{\textbf{0}} & \multicolumn{1}{c}{\textbf{0.2}} & \multicolumn{1}{c}{\textbf{0.7}} & \multicolumn{1}{c}{\textbf{1}} & \multicolumn{1}{c}{\textbf{3}} & \multicolumn{1}{c}{\textbf{5}} & \multicolumn{1}{c}{\textbf{8}} & \multicolumn{1}{c}{\textbf{9}} & \multicolumn{1}{c}{\textbf{10}} \\ \hline
			\textbf{1}                  & 3.24$\times10^{-7}$                       & 4.37$\times10^{-7}$                         & 9.25$\times10^{-7}$                         & 1.45$\times10^{-6}$                         & 2.91$\times10^{-5}$                    & 0.00058                        & 0.05186                        & 0.21978                        & 0.59775                        \\
			\textbf{2}                  & 5.64$\times10^{-6}$                       & 7.46$\times10^{-6}$                         & 1.49$\times10^{-5}$                         & 2.26$\times10^{-5}$                         & 0.00035                        & 0.00479                        & 0.14352                        & 0.26768                        & 0.17958                        \\
			\textbf{$\geq$3}   & 0.99999                        & 0.99999                          & 0.99998                          & 0.99998                          & 0.99962                        & 0.99463                        & 0.80462                        & 0.51254                        & 0.22268                       
			     \\ \hline \hline                 
		\end{tabular}
		\caption{Probability of having $m$ gains prior to reach an upper target $b=10$, $r(u,10,m)$, Combination of Exponentials, $\lambda=2$ and $c=0.5$}
		\label{T:ex2_number_target_b10_comb}
	\end{table}
\end{landscape}

\end{document}